\declaretheorem[name=Lemma, numberwithin=section]{lemma}
\declaretheorem[name=Theorem, numberlike=lemma]{theorem}
\declaretheorem[name=Proposition, numberlike=lemma]{prop}
\declaretheorem[name=Remark, numberlike=lemma, style=remark]{remark}
\newcommand*{\calO}{\mathcal{O}}
\newcommand*{\bbR}{\mathbb{R}}
\newcommand*{\bbN}{\mathbb{N}}
\newcommand*{\calC}{\mathcal{C}}
\newcommand*{\calI}{\mathcal{I}}
\newcommand*{\calB}{\mathcal{B}}
\newcommand*{\SigmaOutTwo}{{\Sigma_{2, k_0}^{\mathrm{out}}}}
\newcommand*{\SigmaInTwo}{{\Sigma_{2, k_0}^{\mathrm{in}}}}
\newcommand*{\SigmaOutOne}{{\Sigma_{1, k_0}^{\mathrm{out}}}}
\newcommand*{\SigmaInOne}{{\Sigma_{1, k_0}^{\mathrm{in}}}}
\newcommand*{\SigmaOutThree}{{\Sigma_{3, k_0}^{\mathrm{out}}}}
\newcommand*{\SigmaInThree}{{\Sigma_{3, k_0}^{\mathrm{in}}}}
\newcommand*{\rin}{r^{\mathrm{in}}}
\newcommand*{\uonein}{{u}^{\mathrm{in}}_1}
\newcommand*{\vonein}{{v}^{\mathrm{in}}_1}
\newcommand*{\ukin}{{u}^{\mathrm{in}}_k}
\newcommand*{\vkin}{{v}^{\mathrm{in}}_k}
\newcommand*{\uoneout}{{u}^{\mathrm{out}}_1}
\newcommand*{\voneout}{{v}^{\mathrm{out}}_1}
\newcommand*{\ukout}{{u}^{\mathrm{out}}_k}
\newcommand*{\vkout}{{v}^{\mathrm{out}}_k}
\newcommand*{\ukoutone}{{u}^{\mathrm{out}}_{k, 1}}
\newcommand*{\uoneoutone}{{u}^{\mathrm{out}}_{1, 1}}
\newcommand*{\vkoutone}{{v}^{\mathrm{out}}_{k, 1}}
\newcommand*{\rout}{{r}^{\mathrm{out}}}
\newcommand*{\uout}{{u}^{\mathrm{out}}}
\newcommand*{\uin}{{u}^{\mathrm{in}}}
\newcommand*{\vout}{{v}^{\mathrm{out}}}
\newcommand*{\vin}{{v}^{\mathrm{in}}}
\newcommand*{\epsout}{{\varepsilon}^{\mathrm{out}}}
\newcommand*{\epsin}{{\varepsilon}^{\mathrm{in}}}
\newcommand*{\DeltaIn}{{\Delta^{\mathrm{in}}}}
\newcommand*{\DeltaOut}{{\Delta^{\mathrm{out}}}}
\newcommand*{\Cin}{C^{\mathrm{in}}}
\newcommand*{\Cout}{C^{\mathrm{out}}}
\newcommand*{\Rin}{R^{\mathrm{in}}}
\newcommand*{\bzero}{\mathbf{0}}
\def\R{\mathbb{R}}
\def\cC{\mathcal{C}}
\def\cO{\mathcal{O}}
\def\txtd{{\textnormal{d}}}
\def\txtD{{\textnormal{D}}}
\def\ra{\rightarrow}
\def\eps{\varepsilon}
\newcommand{\be}{\begin{equation}}
\newcommand{\ee}{\end{equation}}
\newcommand{\benn}{\begin{equation*}}
\newcommand{\eenn}{\end{equation*}}
\newcommand{\bea}{\begin{eqnarray}}
\newcommand{\eea}{\end{eqnarray}}
\newcommand{\beann}{\begin{eqnarray*}}
\newcommand{\eeann}{\end{eqnarray*}}
\newcommand{\myendex}{$\blacklozenge$\end{ex}}
\newcommand{\myendexerc}{$\lozenge$\end{exerc}}
\newcommand{\myendpexerc}{$\lozenge$\end{pexerc}}
\title{Geometric analysis of fast-slow PDEs with fold singularities via Galerkin discretisation}
\author{Maximilian Engel$^{1, 7}$ \hspace{1em} Felix Hummel$^2$ \hspace{1em} 
Christian Kuehn$^{2,3,4}$ \\ 
Nikola Popovi\'c$^5$ \hspace{1em} Mariya Ptashnyk$^6$ \hspace{1em} 
Thomas Zacharis$^{5,}$\footnote{Corresponding author, \href{mailto:tzachar2@ed.ac.uk}{tzachar2@ed.ac.uk}.}}
\date{October 5, 2024} 
\begin{document}

\maketitle

\begin{abstract}
	We study a singularly perturbed fast-slow system of two partial differential equations of
	reaction-diffusion type on a bounded domain via Galerkin discretisation. We assume that the
	reaction kinetics in the fast variable realise a generic fold singularity, whereas the slow
	variable takes the role of a dynamic bifurcation parameter, thus extending the classical
	analysis of the singularly perturbed fold.  Our approach combines a spectral Galerkin
	discretisation with techniques from geometric singular perturbation theory which are
	applied to the resulting high-dimensional systems of ordinary differential equations. In
	particular, we show the existence of invariant slow manifolds in the phase space of the original
	system of PDEs away from the fold singularity, while the passage past the singularity of the
	Galerkin manifolds obtained after discretisation is described by geometric desingularisation, or
	blow-up. Finally, we discuss the relation between these Galerkin manifolds and the underlying
	slow manifolds.
\end{abstract}

\noindent{\it Keywords}: geometric singular perturbation theory, fast-slow systems, fold
singularities, reaction-diffusion equations, Galerkin discretisation

\medskip

\noindent{\it Mathematics Subject Classification numbers}: 35B25, 35K57, 34Cxx, 34D15, 34E15, 37G10 

\footnotetext[1]{Department of Mathematics and Computer Science, Freie Universität Berlin, Arnimallee
14, 14195 Berlin, Germany}
\footnotetext[2]{Department of Mathematics, Technical University of Munich, Boltzmannstraße 3,
Garching bei München 85748, Germany}
\footnotetext[3]{Munich Data Science Institute, Walther-von-Dyck Straße 10, 85748 Garching bei
München, Germany}
\footnotetext[4]{Complexity Science Hub Vienna, Josefstädter Straße 39, 1080 Vienna, Austria}
\footnotetext[5]{School of Mathematics and Maxwell Institute for Mathematical Sciences,
University of Edinburgh, James Clerk Maxwell Building, King’s Buildings, Peter
Guthrie Tait Road, Edinburgh EH9 3FD, United Kingdom}
\footnotetext[6]{School of Mathematical \& Computer Sciences and Maxwell Institute for
Mathematical Sciences, Heriot-Watt University, Edinburgh EH14 4AP, United
Kingdom}
\footnotetext[7]{Korteweg–de Vries Institute for Mathematics, University of Amsterdam, Science
Park 105-107, 1098 XG Amsterdam, The Netherlands}

%
%
%
%

\section{Introduction}

Systems with multiple time scales have been established as a key mathematical tool across a broad
number of applications~\cite{Desrochesetal,KuehnBook,Wechselberger4}. At the centre of the theory of
multiple-scale dynamics are so-called fast-slow systems, which are given in standard form by
\begin{subequations}
\label{eq:fsstandard}
\begin{align}
\varepsilon\frac{\txtd u}{\txtd \tau} &=\dot{u} = f(u,v,\varepsilon),\\
\frac{\txtd v}{\txtd \tau} &=\dot{v} = g(u,v,\varepsilon),
\end{align}
\end{subequations}
where $u=u(\tau)\in\R^m$ are the fast variables, $v=v(\tau)\in\R^n$ are the slow variables,
$\varepsilon>0$ is a small parameter, $\tau$ is the slow time, and $f$ and $g$ are sufficiently
smooth functions of $u$, $v$, and $\varepsilon$. A wide variety of techniques have been developed
for analysing ordinary differential equations (ODEs) of the form in~\eqref{eq:fsstandard}, such as
asymptotic analysis~\cite{BenderOrszag,MisRoz,MKKR,Verhulst}, invariant manifold
theory~\cite{Fenichel4,JonesKopell,Tikhonov}, nonstandard
analysis~\cite{Benoit2,BenoitCallotDienerDiener},  geometric
desingularisation~\cite{DumortierRoussarie,KruSzm3}, and numerical
methods~\cite{DesrochesKrauskopfOsinga2,GuckenheimerKuehn2}. Appealingly, several of these
techniques allow for a highly visual description of the geometry of trajectories, attractors,
invariant sets, and sometimes even the entire phase space via a decomposition of the dynamics into
its fast and slow components. This highly intuitive viewpoint is emphasised by reference to the
corresponding techniques as geometric singular perturbation theory (GSPT). Indeed, in the singular
limit as $\varepsilon\ra 0$, we immediately identify the critical set 
\be
\label{eq:critman}
\cC_0:=\{(u,v)\in\R^{m+n}:f(u,v,0)=0\},   
\ee
which is commonly referred to as the critical manifold for \eqref{eq:fsstandard}. The slow (or
reduced) subsystem on that manifold is given by
\begin{subequations}
\label{eq:slowsub}
\begin{align}
0 &= f(u,v,0),\\
\dot{v} &= g(u,v,0).
\end{align}
\end{subequations}
The differential-algebraic Equation~\eqref{eq:slowsub} has the geometric interpretation of a
(generically) lower-dimensional dynamical system for the slow variables $v$. If $p=(u,v)\in\cC_0$ is
a normally hyperbolic point, i.e., if $\cC_0$ is locally a sufficiently smooth manifold and the
Jacobian matrix $\txtD_u f(p,0)$ at $p\in\cC_0$ has no spectrum on the imaginary axis, then
Fenichel's Theorem~\cite{Fenichel4,Jones,KuehnBook} implies that the critical manifold $\cC_0$
perturbs near $p$ to a slow manifold $\cC_\varepsilon$. The manifold $\cC_\varepsilon$ is then
$\cO(\varepsilon)$-close, in the Hausdorff distance, to $\cC_0$ as $\varepsilon\ra 0$; moreover, the
dynamics on $\cC_\varepsilon$ is locally topologically conjugate to that on $\cC_0$. Effectively,
Fenichel's Theorem thus geometrically asserts that the normally hyperbolic regime can be viewed as a
regular perturbation of its singular limit. However, it is relatively easy to prove
that~\eqref{eq:fsstandard} also gives rise to singular perturbations, as non-hyperbolic points
generically occur for $m,n\geq 1$, which can intuitively be understood by introducing the fast time
scale $t:=\tau/\varepsilon$ in \eqref{eq:fsstandard} and by then taking again the singular limit of
$\varepsilon\ra 0$:
\begin{subequations}
	\label{eq:fastsub}
	\begin{align}
	\frac{\txtd u}{\txtd t}&= u' = f(u,v,0),\\
	\frac{\txtd v}{\txtd t}&= v' = 0.
	\end{align}
\end{subequations}
The fast subsystem, or layer problem, is a parametrised system of ODEs, and is hence not even
structurally similar to the slow subsystem, or reduced problem, Equation~\eqref{eq:slowsub}.
Important transitions between slow and fast dynamics occur at points where normal hyperbolicity is
lost, which can also be interpreted as bifurcation points of the fast subsystem,
Equation~\eqref{eq:fastsub}. The most important geometric technique for the analysis of such
singularities is geometric desingularisation via ``blow-up"~\cite{DumortierRoussarie,KruSzm3}; see
also~\cite{JardonKuehn} for a recent review. A geometric blow-up of a point -- or a more general
submanifold -- amounts to defining a vector field on a higher-dimensional manifold, such as a
sphere, with the aim of regaining some hyperbolicity. That approach has been highly successful
across a variety of classes of low-dimensional systems of ODEs, such as for the desingularisation of
classical fold bifurcations~\cite{DumortierRoussarie,KruSzm3,DeMaesschalckDumortierRoussarie}, more
degenerate folded singularities~\cite{KrupaWechselberger,Wechselberger2}, Hopf
bifurcations~\cite{HayesKaperSzmolyanWechselberger}, and transcritical or pitchfork
bifurcations~\cite{KruSzm4,KuehnSzmolyan}.

However, for infinite-dimensional multiple-scale dynamical systems, there are significant conceptual
and technical challenges to the generalisation of GSPT. Naively, one might anticipate that an
extension of~\eqref{eq:fsstandard} to the partial differential equation (PDE)
\begin{subequations}%
	\label{eq:fsstandardPDE}
	\begin{align}
		u_t &= u_{x x} + f(u,v,\varepsilon),\\
		v_t &= \varepsilon \left( v_{x x} + g(u,v,\varepsilon) \right)
	\end{align}
\end{subequations}
with suitable
boundary conditions, where $u=u(t,x)\in\R^m$, $v=v(t,x)\in\R^n$, and $x\in \Omega$, with $\Omega$
being a bounded interval, may yield a sufficiently basic reaction-diffusion system to which
techniques from standard GSPT can be adapted. 
Naturally, on unbounded domains, an approach via spatial dynamics~\cite{Kirchgaessner,Sandstede1}
will allow one to apply finite-dimensional techniques directly. However, no ODE-based geometric
approach is available for the study of bounded and $\varepsilon$-independent domains. 

While some techniques from the theory of ODEs do translate well to an infinite-dimensional setting
\cite{Henry,KuehnBook1} on such domains, the PDE in \eqref{eq:fsstandard} presents
challenges~\cite{VoBertramKaper}.  When there is only a bounded perturbation in the slow variables,
i.e., when the term $\varepsilon v_{xx}$ is absent, the persistence of invariant manifolds in the
normally hyperbolic regime was resolved in~\cite{BatesLuZheng1,BatesLuZheng}. In that case, the
perturbation is, in essence, finite-dimensional, such that more classical invariant manifold
techniques apply~\cite{Avitabileetal1,HaragusIooss,Sieber}.  When the slow variables involve
unbounded operators, however, the situation is far more complicated, as the $\varepsilon
v_{xx}$-term results in non-trivial interactions between fast and slow modes in the limit of
$\varepsilon \ra 0$. Therefore, there is a crucial need for developing an infinite-dimensional
analogue of GSPT, which is the key motivation for this work. The normally hyperbolic regime in
\eqref{eq:fsstandardPDE} was resolved only recently in~\cite{HummelKuehn}, where an invariant
manifold theory was developed for~\eqref{eq:fsstandardPDE} on the basis of functional-analytic
techniques. 

An alternative approach via spectral Galerkin discretisation was proposed in~\cite{EngelKuehn1},
while a comparison of the two approaches can be found in~\cite{EngelHummelKuehn}. Since Galerkin
discretisation yields, upon truncation at a finite number of modes, large systems of singularly
perturbed ODEs of fast-slow type, one may hope that even a loss of normal hyperbolicity at singular
points can be treated by geometric desingularisation, or blow-up \cite{DumortierRoussarie,KruSzm3},
which was the focus in~\cite{EngelKuehn1}. There, the blow-up technique was applied to a Galerkin
truncation
resulting from a transcritical singularity, i.e., for $f(u,v,\varepsilon)=u^2-v^2+\mu \varepsilon$
in \eqref{eq:fsstandardPDE}, with $\mu$ a real parameter. As is well-known in the finite-dimensional
context, transcritical (and pitchfork) singularities are slightly more straightforward to
desingularise than fold singularities; cf.~the analysis in~\cite{KruSzm4} and~\cite{KruSzm3},
respectively.
Hence, in this work we consider a generic fold singularity in \eqref{eq:fsstandardPDE} as a logical
next step; specifically, we study the system
\begin{subequations}%
	\label{eq:initial_problem}
	\begin{align}
		u_t &= u_{xx} - v + u^2 + H^u(u, v, \varepsilon) && \text{for } x \in (-a, a)\text{ and } t>0, \\
		v_t &= \varepsilon (v_{xx} - 1 + H^v(u, v, \varepsilon)) && \text{for }  x \in (-a, a)\text{ and } t>0,\\
		& u_x(t,x) = 0 = v_x(t,x)  && \text{for } x= \pm a\text{ and } t>0, \\
	& u(0,x) = u_0(x)\ \text{ and }\ v(0,x) =v_0(x)  && \text{for } x \in (-a, a) \label{eq:initial_problem:b}
	\end{align}
\end{subequations}
on bounded domains, where the domain length $a > 0$ is fixed, with zero Neumann boundary conditions.
Here, $H^u$ and $H^v$ are higher-order terms which are specified below.


\begin{remark}
	Note that locally well-defined (smooth) solutions for~\eqref{eq:initial_problem} can be obtained from
	classical theory on sectorial operators with reaction kinetics \cite{Henry} and parabolic
	regularity \cite{Evans}.
\end{remark}

This work is divided into two parts. In the first part, we apply results from~\cite{HummelKuehn} to
obtain slow manifolds which drive the (semi)flow of \eqref{eq:initial_problem} within a
neighbourhood of the origin away from the fold in an appropriately chosen phase
space for suitable initial data. Using results from~\cite{EngelHummelKuehn}, we then approximate
these manifolds by slow manifolds in a truncated -- and thus \emph{finite-dimensional} -- Galerkin
discretisation of~\eqref{eq:initial_problem}. To avoid confusion, we henceforth refer to these
finite-dimensional manifolds as \emph{Galerkin manifolds}. For any fixed
$\varepsilon>0$, the resulting approximation can be made arbitrarily precise provided an
appropriate truncation level, denoted by $k_0>0$, is chosen; furthermore, we show that solutions of
the Galerkin discretisation converge to those of Equation~\eqref{eq:initial_problem} under suitable
assumptions for $k_0\rightarrow \infty$, which allows us to interpret the
corresponding Galerkin manifolds as ``approximately invariant slow manifolds" for
\eqref{eq:initial_problem}. As shown in~\cite{EngelHummelKuehn}, to study the
dynamics beyond trajectories and track families of Galerkin slow manifolds as $\varepsilon
\rightarrow 0$ and $k_0\rightarrow \infty$, one has to consider a coupling between the two
parameters. The resulting double singular limit~\cite{Kuehnetal} is not specific to PDE-type
settings, as it occurs also in the time discretisation of fast-slow
ODEs~\cite{ArcidiaconoEngelKuehn,EngelKuehn,JelbartKuehn}. Yet, as for the time-discretised case, we
find that there exist open parameter sets, for $\varepsilon$ and $k_0^{-1}$ close to zero, where our
results hold~\cite{EngelHummelKuehn}.

In the second part, which is the main result of this work, we apply the blow-up technique to extend
these Galerkin manifolds around the singularity at the origin in the truncated, $2k_0$-dimensional
Galerkin discretisation of~\eqref{eq:initial_problem}. Under appropriate assumptions on initial
conditions, e.g.~by restricting  to solutions of~\eqref{eq:initial_problem} that are close to
spatially homogeneous ones in an appropriately chosen norm, we show that the dynamics of the
Galerkin truncation in a neighbourhood of the origin can be reduced to that of the corresponding ODE
for the singularly perturbed planar fold, a well-known prototypical fast-slow system that was
studied via blow-up in~\cite{KruSzm3}.

There are evident similarities between our analysis and classical GSPT, where Fenichel's Theorem
\cite{Fenichel4} is combined with geometric desingularisation in the form of blow-up;
correspondingly, fast-slow systems of arbitrary dimension in both the fast and slow variables have
been studied geometrically in previous work \cite{brons2006mixed, wechselberger2012propos}.

However, the high dimensionality of our Galerkin discretisation, in combination with the inherent
spatial dependence of Equation~\eqref{eq:initial_problem}, poses both conceptual and technical
challenges. Firstly, a preparatory rescaling of the domain length with (a fractional power of)
$\varepsilon$ is essential to our approach, and is required to obtain both well-defined and
non-trivial dynamics in the singular limit after blow-up. A consequence of the rescaling is,
however, that the approach in \cite{wechselberger2012propos} does not apply, as the assumptions
therein are not satisfied. Secondly, careful consideration of initial data, in tandem with precise
estimates on the evolution of higher-order modes in the Galerkin discretisation, is required to
ensure that solutions do not exhibit finite-time blowup before reaching the singularity at the
origin; again, that blowup is inherently due to the Galerkin discretisation arising from a system of
PDEs.

Our approach has a number of further advantages: it achieves an effective reduction to the
singularly perturbed planar fold which can be studied via blow-up of the non-hyperbolic origin,
rather than of a submanifold of singularities; furthermore, it allows us to account for the impact
of data related to the original PDE, Equation~\eqref{eq:initial_problem}, such as the domain length
or the eigenvalues of the Laplacian therein, on the flow in its passage past the origin.
Correspondingly, our approach yields explicit asymptotics, rather than merely an existence
statement, and hence seems highly suited to the study of singular perturbation problems of fast-slow
type obtained by Galerkin discretisation.Such asymptotics will also be crucial for future work on
the double singular limit as $\varepsilon \rightarrow 0$ and $k_0\rightarrow \infty$, which will
build on~\cite{EngelHummelKuehn}.

Our main results can hence be summarised as follows; precise statements will be given below.
\begin{itemize}
	\item Equation~\eqref{eq:initial_problem} possesses a family of slow manifolds $S_{\varepsilon, \zeta}$
	for small $\varepsilon > 0$, where $\zeta > 0$ is an additional control parameter. These can be
		approximated by Fenichel-type slow manifolds
	$\calC_{\varepsilon} = \calC_{\varepsilon, k_0}$ in the corresponding Galerkin discretisation truncated at $k_0>0$, provided $k_0$ is sufficiently large.
	\item For any $k_0 > 0$ fixed, the Galerkin manifolds $\calC_{\varepsilon, k_0}$ are extended around the
		fold singularity at the origin in the Galerkin discretisation, which we show by combining the
		well-known fast-slow analysis of the singularly perturbed planar fold with {\it a priori} estimates that control higher-order modes.
\end{itemize}

In summary, our work is a stepping stone towards the development of a geometric approach for the
study of singularities in multiple-scale (systems of) PDEs. However, it still remains to
relate, rigorously and uniformly in $\varepsilon$ and $k_0$, the extension of the
Galerkin manifolds $\calC_{\varepsilon, k_0}$ after passage past the fold singularity to
corresponding manifolds for~\eqref{eq:initial_problem}when $\varepsilon
\rightarrow 0$ and $k_0\rightarrow \infty$. In the normally hyperbolic regime, we do know the
scaling relation between $\varepsilon$ and $k_0$ in the double singular limit
\cite{EngelHummelKuehn}; however, further work is required to understand that limit near
non-normally hyperbolic singularities. Here, we contribute to this ongoing research programme by
providing detailed estimates, at the level of the Galerkin discretisation of
\eqref{eq:initial_problem} near a generic fold singularity, in dependence of both $\varepsilon$ and
$k_0$.
%
%
%
%
\section{Galerkin discretisation}%
\label{sec:slow-and-galerkin}
The starting point for our analysis is the singularly perturbed system of PDEs in
\eqref{eq:initial_problem}. In analogy with the canonical form for the singularly perturbed planar
fold studied in \cite{KruSzm3}, we refer to $u$ and $v$ therein as the fast and slow variables,
respectively. The functions $H^u$ and $H^v$ are assumed to be smooth and of the form
\begin{subequations}%
	\begin{align}
		H^u(u, v, \varepsilon) &= \calO(\varepsilon, uv, v^2, u^3)\quad\text{and} \\
		H^v(u, v, \varepsilon) &= \calO(v^2),
	\end{align}
\end{subequations}
respectively. In addition, we assume that the higher-order terms $H^v$ in
\eqref{eq:initial_problem:b} are orthogonal in $L^2(-a, a)$ to the subspace of constant functions,
which is not an essential restriction that is only imposed for technical reasons, as will become
apparent in  estimates for solutions of the system of ODEs resulting from a Galerkin discretisation
of \eqref{eq:initial_problem}; see \autoref{lemma:K1-estimates}. In other words, we restrict $H^v$
so that $H^v(u, v, \varepsilon)$ has zero mean over $[-a,a]$ for any $u,v \in L^2(-a,a)$. One
specific example is given by $H^v(u, v, \varepsilon) = \tilde H^v(u, v, \varepsilon) - \frac{1}{2a}
\int_{-a}^{a} \tilde H^v(u, v, \varepsilon) dx$, where $\tilde H^v : \bbR^3 \rightarrow \bbR$ is
smooth. Note that we do not permit linear terms in $H^v$, since  $v_{xx}$ is a linear operator in
\eqref{eq:initial_problem:b}; however, we could consider more general $H^v$, such as $H^v(u, v,
\varepsilon) = \calO(u^2, uv, v^2, \varepsilon)$, with the caveat that we would have to impose
further restrictions on the initial values for the higher-order modes $u_k$ ($k \ge 2$), in analogy
to those imposed on $v_k$ for $k \ge 2$.

More compactly, we can write \eqref{eq:initial_problem} as 
\[
	w_t = A w + F(w), \quad\text{with } w(0)= w_0,
\]
where $w = (u,v)^T$, $w_0 = (u_0, v_0)^T$, $F(w) = (- v + u^2 + H^u(u, v, \varepsilon), - \varepsilon + \varepsilon H^v(u, v, \varepsilon))^T$, and 
\[
	Aw =\begin{pmatrix} 
	u_{xx} & 0 \\
	0 & \varepsilon v_{xx}
	\end{pmatrix}, \quad\text{with } 
	\mathcal D(A) = \{w \in  H^2(-a,a)^2 \; : \; u_x = 0 = v_x \; \text{ at }x=\pm a \}.  
\]
We have that $F(w)$ is locally Lipschitz continuous on $Z^\alpha = \mathcal D(A^\alpha)$ for
$1/4<\alpha <1$; moreover, the operator $A$ is sectorial and a generator of an analytic semigroup on
$Z=L^2(-a,a)^2$. Thus, for $w_0 \in Z^\alpha$, there exists a unique local-in-time solution $w \in
C([0, t_\ast); Z^\alpha) \cap C^1((0, t_\ast); Z)$, with $w(t) \in \mathcal D(A)$, to
\eqref{eq:initial_problem} for some $t_\ast >0$; see e.g.~\cite{Henry}. The quadratic nonlinearity
in \eqref{eq:initial_problem} implies a potential finite-time blowup of solutions to
\eqref{eq:initial_problem}; cf.~e.g.~\cite{Ball}. However, simple estimates show that, for initial
values $u_0 <0$ and $v_0>0$, a solution of \eqref{eq:initial_problem} exists for $t>0$ such that
$u(t)\leq 0$ and $v(t)\geq 0$; see \ref{app:boundconv} for details.

Before giving a precise statement of our results, we introduce the Galerkin discretisation of the
system of PDEs in \eqref{eq:initial_problem} with respect to the eigenbasis $\{ e_k(x) : k = 1, 2,
\dots \}$ of the Laplacian on $L^2(-a, a)$ with Neumann boundary conditions. Specifically, the
relevant orthonormal basis and the corresponding eigenvalues are given by
\begin{equation} \label{eq:eigen}
		e_{k + 1} (x) = \sqrt{\frac{1}{a}} \cos \left( \frac{k \pi (x+ a)}{2a} \right)\quad\text{and}\quad
		\lambda_{k + 1} = -\frac{k^2 \pi^2}{4 a^2}\quad\text{for }
	k = 1, 2, \dots,
\end{equation}
respectively, with $e_1(x) = \frac{1}{\sqrt{2a}}$ and $\lambda_1 = 0$. 
Next, we define 
\begin{equation}%
	\label{eq:def-bk}
	b_k := -{(k - 1)}^2 \pi^2,
\end{equation}
so that $\lambda_{k + 1} = \frac{b_{k + 1}}{4 a^2}$.

Then, solutions of~\eqref{eq:initial_problem} can be expanded as
\begin{equation}
	\label{eq:expansion}
    u(x, t) = \sum_{k = 1}^\infty e_k(x) u_k(t)\quad\text{and}\quad v(x, t) = \sum_{k = 1}^\infty
   	 e_k(x) v_k(t).
\end{equation}
Substitution of~\eqref{eq:expansion} into~\eqref{eq:initial_problem} results in the infinite system of ODEs
\begin{subequations}%
	\label{eq:infinite_ode_system}
	 \begin{align}
		 u_k^\prime &= \lambda_k u_k - \langle v, e_k \rangle + \langle u^2, e_k \rangle
			+ \langle H^u, e_k \rangle, \\
		 v_k^\prime &= \varepsilon \left( \lambda_k v_k - \langle 1, e_k \rangle +
			\langle H^v, e_k \rangle \right) 
	\end{align}
\end{subequations}
for $k = 1, 2, \dots$, where 
\[
    \langle \phi, \psi \rangle = \int_{-a}^a \phi(x) \psi(x) dx \quad\text{for }\phi, \psi \in L^2(-a, a).
\]
Using the formulae in~\eqref{eq:eigen}, we can then derive the following explicit form of~\eqref{eq:infinite_ode_system}:
\begin{prop}%
	\label{prop:discretisation}
	The system in~\eqref{eq:infinite_ode_system}, truncated at $k_0 \in \bbN$, reads
	\begin{subequations}%
		\label{eq:galerkin_system_original}
		\begin{align}
			 u_1^\prime &= - v_1 +\frac{1}{\sqrt{2a}}\sum_{j = 1}^{k_0} u_j^2 
				+ H^u_1, \\
			v_1^\prime &= -\sqrt{2a} \varepsilon, \\
			u_k^\prime &= \frac{1}{4} a^{-2} b_k u_k - v_k + \frac{2}{\sqrt{2a}} u_1 u_k + 
				\frac{1}{\sqrt{a}} \sum_{i, j = 2}^{k_0} \eta^k_{i, j} u_i u_j + H^u_k, \\
			v_k^\prime &= \varepsilon \frac{1}{4} a^{-2} b_k  v_k + \varepsilon H^v_k 
		\end{align}
	\end{subequations}
	for $2 \le k \le k_0$, where $0 \le \eta^k_{i, j} \le 1$ is non-zero if and only if
	$i + j - 2 = k - 1$ or $|i - j| = k - 1$, and
	\begin{align}
		H^u_1 &= \calO(\varepsilon, v_1^2, v_j^2, u_1 v_1, u_j v_j, u_1 u_j^2, u_i u_j u_l) 
			\quad\text{for } 2 \le i,j,l \le k_0, \\
		H^u_k &= \calO(v_1 v_k, v_i v_j, u_1 v_k, u_k v_1, u_i v_j, u_1^2 u_k, u_1 u_i u_j, 
		u_i u_j	u_l)\quad\text{for } 2 \le i,j,l \le k_0,\text{ and} \\
		H^v_k &= \calO (v_1 v_k, v_i v_j) \quad\text{for }  2 \le i,j \le k_0.
	\end{align}
\end{prop}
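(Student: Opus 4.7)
The plan is to substitute the expansions in~\eqref{eq:expansion} directly into~\eqref{eq:initial_problem}, take the $L^2$-inner product with each basis element $e_k$, and then evaluate the resulting integrals using the explicit cosine form of $\{e_k\}$ given in~\eqref{eq:eigen}. Since the $e_k$ are eigenfunctions of the Neumann Laplacian and form an orthonormal basis, the linear terms reduce instantly: $\langle u_{xx}, e_k \rangle = \lambda_k u_k$, $\langle v, e_k \rangle = v_k$, and $\langle v_{xx}, e_k \rangle = \lambda_k v_k$. The eigenvalue identity $\lambda_k = b_k/(4a^2)$ then turns the diffusion terms into the coefficients $\tfrac{1}{4} a^{-2} b_k$ appearing in~\eqref{eq:galerkin_system_original}.

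Next I would handle the constant forcing in the $v$-equation. Since $e_1 = 1/\sqrt{2a}$, one has $\langle 1, e_1 \rangle = \sqrt{2a}$, while $\langle 1, e_k \rangle = 0$ for $k \ge 2$ by the zero-mean property of the cosines. Combined with $\lambda_1 = 0$ and the standing assumption that $H^v$ is orthogonal to constants (which forces $\langle H^v, e_1 \rangle = 0$), this immediately yields $v_1' = -\sqrt{2a}\,\varepsilon$ and $v_k' = \varepsilon\bigl(\tfrac{1}{4}a^{-2}b_k v_k + H^v_k\bigr)$ for $k \ge 2$, with $H^v_k := \langle H^v, e_k\rangle$.

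The main work is the projection of the nonlinearity $u^2$ onto $e_k$. For $k = 1$, using $e_1 = 1/\sqrt{2a}$ and Parseval for the truncated expansion gives
\[
\langle u^2, e_1 \rangle = \frac{1}{\sqrt{2a}} \int_{-a}^{a} u^2 \,dx = \frac{1}{\sqrt{2a}} \sum_{j=1}^{k_0} u_j^2 .
\]
For $k \ge 2$ I would apply the product-to-sum identity
\[
\cos\!\Bigl(\tfrac{(i-1)\pi(x+a)}{2a}\Bigr)\cos\!\Bigl(\tfrac{(j-1)\pi(x+a)}{2a}\Bigr) = \tfrac{1}{2}\Bigl[\cos\!\Bigl(\tfrac{(i-j)\pi(x+a)}{2a}\Bigr) + \cos\!\Bigl(\tfrac{(i+j-2)\pi(x+a)}{2a}\Bigr)\Bigr] ,
\]
together with $\langle \cos(\tfrac{m\pi(x+a)}{2a}), e_k\rangle = \sqrt{a}\,\delta_{m,k-1}$ for $m \ge 1$. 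Splitting the double sum $\sum_{i,j=1}^{k_0} u_i u_j \langle e_i e_j, e_k\rangle$ into the contributions from $(i,j)=(1,1)$ (which vanishes against $e_k$ for $k\ge 2$), from $i=1$ or $j=1$ (which collapses via orthonormality to $2 u_1 u_k / \sqrt{2a}$), and from $i,j\ge 2$ (which produces the sum $\tfrac{1}{\sqrt{a}}\sum \eta^k_{i,j} u_i u_j$) gives exactly the stated form. The coefficient $\eta^k_{i,j}$ equals $\tfrac{1}{2}[\mathbf{1}_{|i-j|=k-1} + \mathbf{1}_{i+j-2=k-1}]$, so $\eta^k_{i,j} \in \{0, 1/2, 1\}$, verifying both the support condition and the bound $0\le \eta^k_{i,j} \le 1$.

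Finally, the higher-order contributions $H^u_k := \langle H^u, e_k\rangle$ and $H^v_k := \langle H^v, e_k\rangle$ are expanded term by term: a monomial of the form $u^p v^q$ becomes a multilinear polynomial in the modes $u_i, v_i$ via the above triple-integral computation, and the resulting orders are simply read off. Because $e_1$ contributes only the factor $1/\sqrt{2a}$ while each $e_k$ with $k\ge 2$ vanishes when integrated against constants, the orthogonality of $H^u$ to constants, where applicable, eliminates certain terms, and separate tracking of the $k=1$ and $k\ge 2$ projections yields the two distinct expressions for $H^u_1$ and $H^u_k$. The main obstacle in this proof is not a deep analytic point but rather the careful combinatorial bookkeeping for $\langle u^2, e_k \rangle$, in particular handling the degenerate cases $i=j$, $|i-j|=0$, and $i+j-2=0$ so that the coefficient $\eta^k_{i,j}$ has the stated range; the $\mathcal O$-order accounting for $H^u_k$ and $H^v_k$ then follows routinely from the multilinear expansion of $H^u$ and $H^v$ in the modal variables.
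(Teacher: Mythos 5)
Your proposal follows essentially the same approach as the paper: expand in the eigenbasis, reduce the linear and constant terms by orthonormality, and handle $\langle u^2, e_k\rangle$ by computing $\langle e_i e_j, e_k\rangle$ with the cosine product-to-sum identity. Your closed form $\eta^k_{i,j} = \tfrac{1}{2}[\mathbf{1}_{|i-j|=k-1} + \mathbf{1}_{i+j-2=k-1}]$ is correct and in fact slightly sharper than the paper's integral expression (it shows that for $i,j,k\ge 2$ the two conditions cannot hold simultaneously, so $\eta^k_{i,j}\in\{0,\tfrac12\}$, consistent with the stated bound $0\le\eta^k_{i,j}\le 1$); the Parseval argument for $k=1$ is equivalent to the paper's collapsing of the double sum by orthonormality.
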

\begin{remark}
Our assumption that the higher-order terms $H^v$ are orthogonal to the subspace of constant
functions ensures that $H^v_1 = 0$ in \eqref{eq:galerkin_system_original}.
\end{remark}
\begin{proof}
	Because the basis ${\{ e_j \}}_{j \ge 1}$ is orthonormal in $L^2(-a, a)$, we have $\langle v, e_k \rangle = v_k$ for all
	$k \ge 1$.  We observe that $\langle 1, e_1 \rangle = \sqrt{2a}$ and 
	$ \langle 1, e_k \rangle = 0$ for any $k \ge 2$. Recalling that $e_1$ is a constant function, we find
	\[
		\langle e_1 e_j, e_k \rangle = e_1 \langle e_j, e_k \rangle = {(2a)}^{-1/2} \delta_{j, k}
	\]
	for all $j, k \ge 1$, where $\delta_{j, k}$ denotes the standard Kronecker delta. In addition, simple calculations show that 
	\[
		\langle e_i e_j, e_k \rangle := a^{-1/2} \eta^k_{i, j},
	\]
	where $\eta^k_{i, j}$ is independent of $a$ and given by
	\[
		\eta^k_{i, j} = \int_0^1 \cos ((i + j - 2) \pi x) \cos ( (k - 1) \pi x ) dx
			+ \int_0^1 \cos( (i - j) \pi x) \cos ( (k - 1) \pi x) dx.
	\]
	It follows that $0 \le \eta^k_{i, j} \le 1$ is non-zero if and only if $i + j  = k + 1$ or
	$|i - j| = k - 1$. In particular, $\langle e_k^2, e_k \rangle = 0$ for $2 \le k \le k_0$. Equipped
	with the relations above, we can now calculate the term $\langle u^2, e_k \rangle$
	in~\eqref{eq:infinite_ode_system}. For $k = 1$, we have
	\begin{align*}
		\Big\langle \Big( \sum_{j = 1}^{k_0} u_j e_j \Big)^2, e_1 \Big\rangle 
			&= \sum_{i, j = 1}^{k_0} u_i u_j \langle e_j e_i, e_1 \rangle = \sum_{i, j = 1}^{k_0} u_i u_j e_1 \langle e_j , e_i \rangle = {(2a)}^{-1/2} \sum_{j = 1}^{k_0} u_j^2,
	\end{align*}
	whereas for $2 \le k \le k_0$, it holds that
	\begin{align*}
		\Big\langle \Big( \sum_{j = 1}^{k_0} u_j e_j \Big)^2, e_k \Big\rangle
			&= \sum_{i, j = 1}^{k_0} u_i u_j \langle e_j e_i, e_k \rangle = 2 u_1 \sum_{j = 1}^{k_0} u_j \langle e_j e_1, e_k \rangle 
				+ \sum_{i, j = 2}^{k_0} u_i u_j \langle e_j e_i, e_k \rangle \\
			&= 2 {(2a)}^{-1/2} u_1 u_k + \sum_{i, j = 2}^{k_0} \eta^k_{i, j} u_i u_j,	
	\end{align*}
	as in the first sum only the term with $j = k$ is non-zero.
\end{proof}

The relation between solutions of the Galerkin discretisation in \eqref{eq:infinite_ode_system} and
those of Equation~\eqref{eq:initial_problem} is discussed briefly in
\ref{app:boundconv}.

\section{Slow and Galerkin manifolds}
\label{sec:manifolds}

In analogy to standard procedure for fast-slow ODEs of singular perturbation type, the first step in
our geometric analysis is to determine the critical manifold for \eqref{eq:initial_problem}.
Considering the slow formulation of~\eqref{eq:initial_problem}, obtained from the time rescaling
$\tau = \varepsilon t$, 
\begin{subequations}%
	\label{eq:initial-problem-slow}
	\begin{align}
		\varepsilon u_\tau &= u_{xx} - v + u^2 + H^u(u, v, \varepsilon) && \text{ for } x\in (-a,a)\text{ and }\tau>0, \\
		v_\tau &= v_{xx} - 1 + H^v(u,v, \varepsilon)  && \text{ for } x\in (-a,a)\text{ and }\tau>0, \\
		& u_x(\tau ,x) = 0 =  v_x(\tau,x)  && \text{ for } x=\pm a \text{ and }\tau>0,
	\end{align}
\end{subequations}
and setting $\varepsilon = 0$ therein, we find that the critical manifold is given by the set
\begin{equation}
	\left\{ (u,v) : 0 = u_{xx} - v + u^2 + H^u(u, v, 0),~u_x(\cdot,\pm a)=0=v_x(\cdot,\pm a) \right\}.
\end{equation}
Restricting to spatially homogeneous solutions, we define the critical manifold $S_0$ as the set of functions
\begin{equation}%
	\label{eq:critical-manifold-pde}
	S_0 := \left\{ (u,v) \in \bbR^2 : 0 = - v + u^2 + H^u(u, v, 0) \right\},
\end{equation}
abusing notation and identifying constant functions $u : [-a, a] \rightarrow \bbR$ with the values they take.
Due to our assumptions on the form of $H^u$, near the origin $(u, v) = (0,0)$ in $(u,v)$-space the set $S_0$ is given as a 
graph 
\begin{equation}
	S_0 = \left\{ (u, v) \in \bbR^2: v = u^2 + \calO( u^3) \right\}.
\end{equation}
Proceeding again as in a finite-dimensional setting, the second step in our analysis
concerns the persistence of the manifold $S_0$ for $\varepsilon$ positive and sufficiently small.
However, in an infinite-dimensional setting, the concept of ``fast'' and ``slow'' variables can be delicate, as for
any $\varepsilon > 0$, there exists $k>0$ such that $\varepsilon \lambda_k = O(1)$.
One way to address that complication is to split the slow variables $v$ into fast and slow parts,
which we make precise in the following proof of~\autoref{Prop_3_2}. We refer to~\cite{HummelKuehn}
for further discussion and details.
\begin{prop}\label{Prop_3_2}
	Let $(u, v) \in S_0$ with $u < 0$. Consider any small $\zeta>0$ and $u \le \omega_A < 0$, $\omega_f \in \bbR$, and $L_f > 0$ such that  $\omega_A + L_f
	<\omega_f < 0$. Then, there exist spaces $Y^\zeta_S \oplus Y^\zeta_F = L^2(-a, a)$ and a family
	of attracting slow manifolds around $(u, v)$ that are given as graphs
	\begin{equation}\label{slow_manig_1}
		S_{\varepsilon, \zeta} := \left\{ \left( h^{\varepsilon, \zeta}_X(v), 
			h^{\varepsilon,	\zeta}_{Y^\zeta_F}(v), v \right) : v \in Y_S^\zeta \right\}
	\end{equation}
	for $0 < \varepsilon < C \frac{\omega_f}{\omega_A} \zeta$ and some fixed  $C \in (0, 1)$,
	where $\left( h^{\varepsilon, \zeta}_X(v), h^{\varepsilon,	\zeta}_{Y^\zeta_F}(v)
	\right) :
	Y^\zeta_S \rightarrow H^2(-a,a) \cross (Y^\zeta_F \cap H^2(-a,a))$. 
\end{prop}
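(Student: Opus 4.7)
The plan is to invoke the infinite-dimensional invariant manifold theorem from \cite{HummelKuehn}, which is tailored to precisely this situation. The key conceptual point, already flagged in the paragraph before the statement, is that in infinite dimensions we cannot honestly call the entire $v$-equation ``slow'': for any fixed $\varepsilon>0$ there are Laplacian modes $e_k$ with $|\lambda_k|\sim 1/\varepsilon$, so $\varepsilon\lambda_k$ is $O(1)$ on those modes. The resolution is to do a spectral splitting of the slow component and absorb the genuinely fast modes of $v$ into the fast block.

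Concretely, I would introduce a spectral cut-off determined by $\zeta$: let $k^\ast=k^\ast(\varepsilon,\zeta)$ be the smallest $k$ with $\varepsilon|\lambda_k|\geq\zeta$, and set
\[
Y^\zeta_S:=\overline{\mathrm{span}}\{e_k : k<k^\ast\},\qquad
Y^\zeta_F:=\overline{\mathrm{span}}\{e_k : k\geq k^\ast\},
\]
so that $L^2(-a,a)=Y^\zeta_S\oplus Y^\zeta_F$. Writing $v=v_S+v_F$ accordingly, the system~\eqref{eq:initial-problem-slow} is recast as a fast-slow system with fast variable $(u,v_F)\in X\times Y^\zeta_F$ (where $X$ is an appropriate fractional power space associated with the Laplacian acting on $u$) and slow variable $v_S\in Y^\zeta_S$. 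On $Y^\zeta_F$ the relevant linear operator is $\varepsilon\partial_{xx}|_{Y^\zeta_F}$, whose spectrum lies in $(-\infty,-\zeta]$ by construction of the cut-off, while on $Y^\zeta_S$ the linear part has eigenvalues of size $\varepsilon\lambda_k$ with $|\varepsilon\lambda_k|<\zeta$, i.e., the slow block has spectrum in a small neighbourhood of the origin.

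The critical-manifold condition at the reference point $(u,v)\in S_0$ is, after this splitting, still $0=-v+u^2+H^u(u,v,0)$. The linearisation in the fast block at $(u,v)$ is
\[
\begin{pmatrix}\partial_{xx}+2u+\partial_u H^u & -\mathrm{Id}+\partial_v H^u\\
0 & \varepsilon\partial_{xx}|_{Y^\zeta_F}\end{pmatrix},
\]
which is block-triangular. Because $u<0$, the upper diagonal block is a strictly negative self-adjoint perturbation of $\partial_{xx}$ with Neumann boundary conditions, hence has spectrum bounded above by $2u+\mathcal O(u^2)\leq\omega_A<0$; the lower block has spectrum in $(-\infty,-\zeta]$. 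The prescribed inequality $\varepsilon<C\,\omega_f/\omega_A\cdot\zeta$ is exactly what is needed to ensure that the effective fast spectrum sits below $\omega_A$, while the slow spectrum lies above $\omega_f$, with the Lipschitz gap $L_f$ absorbing the nonlinearities. At this point the hypotheses of the invariant manifold theorem of \cite{HummelKuehn} are met, and it produces a family of attracting invariant manifolds of the form~\eqref{slow_manig_1}, given as graphs $v_S\mapsto(h^{\varepsilon,\zeta}_X(v_S),h^{\varepsilon,\zeta}_{Y^\zeta_F}(v_S))$ over $Y^\zeta_S$, with the codomain being $H^2(-a,a)\times(Y^\zeta_F\cap H^2(-a,a))$ as claimed.

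The main obstacle I foresee is bookkeeping rather than conceptual: one must verify, in the precise function-space setting used by \cite{HummelKuehn}, that the nonlinearity $F$ built from $u^2$, $H^u$ and $H^v$ is locally Lipschitz with a constant $L_f$ compatible with the spectral gap $\omega_A+L_f<\omega_f$, uniformly in the parameter $\zeta$ as the splitting varies. The quadratic term $u^2$ is the most delicate, since its Lipschitz norm on bounded sets must be controlled on the chosen fractional-power space $Z^\alpha$ (with $1/4<\alpha<1$, as discussed in the paragraph following~\eqref{eq:initial_problem}); this forces the working neighbourhood of $(u,v)$ to be taken sufficiently small, after which the application of the invariant-manifold result is routine.
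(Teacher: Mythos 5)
Your overall strategy matches the paper's: do a spectral splitting of the slow variable $v$ and then invoke the invariant-manifold machinery of \cite{HummelKuehn}/\cite{EngelHummelKuehn}. The paper's proof also first translates the base point on $S_0$ to the origin, rescales $v$, and introduces a smooth cut-off $\chi$ to obtain modified nonlinearities whose Lipschitz constants can be made as small as needed — you flag this as ``bookkeeping,'' but it is a substantive part of the argument and exactly where the gap condition $\omega_A + L_f < \omega_f$ enters.

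The genuine gap in your proposal is the definition of the spectral cut-off. You take $k^\ast=k^\ast(\varepsilon,\zeta)$ to be the smallest $k$ with $\varepsilon\lvert\lambda_k\rvert\geq\zeta$, so your splitting $Y^\zeta_S\oplus Y^\zeta_F$ depends on $\varepsilon$. But the proposition asserts the existence of a \emph{single} fixed splitting $Y^\zeta_S\oplus Y^\zeta_F=L^2(-a,a)$ together with a \emph{family} of manifolds indexed by all $\varepsilon\in\bigl(0,\,C\,\tfrac{\omega_f}{\omega_A}\,\zeta\bigr)$; letting the subspaces themselves vary with $\varepsilon$ does not yield this, and would also break the subsequent comparison with the fixed $(2k_0+1)$-dimensional Galerkin truncation in Proposition~\ref{Prop_3_3}. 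The paper instead fixes $k_0$ by the $\varepsilon$-\emph{independent} criterion
\[
  -\frac{(k_0+1)^2\pi^2}{4a^2} \,<\, \zeta^{-1}\omega_A \,\leq\, -\frac{k_0^2\pi^2}{4a^2},
\]
i.e.\ the cut-off is placed where $\lambda_k$ crosses $\zeta^{-1}\omega_A$, with $\omega_A$ tied to the linearised decay rate in the $u$-block; the parameter $\varepsilon$ then only constrains how far it may range for that fixed splitting, via $0<\varepsilon<C\,\tfrac{\omega_f}{\omega_A}\,\zeta$, which together with the semigroup estimates~\eqref{estim_semigroup} supplies the spectral-gap hypotheses of the cited theorem. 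To repair your argument, replace the $\varepsilon$-dependent threshold $\varepsilon\lvert\lambda_k\rvert\geq\zeta$ by the threshold $\lvert\lambda_k\rvert\geq \zeta^{-1}\lvert\omega_A\rvert$ (or, equivalently, pin the cut-off at the top of the $\varepsilon$-range and observe that the resulting gap estimates are monotone in $\varepsilon$); the rest of your block-triangular linearisation analysis then goes through, noting that the lower-left block vanishes precisely because $H^v$ is assumed independent of $u$.
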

\begin{proof}
We show that the assumptions of~\cite[Theorem 2.4]{EngelHummelKuehn} are satisfied, which will imply
the existence of a family of slow manifolds stated in \eqref{slow_manig_1}. Given a point $(u, v) = (c, c^2+
\calO(c^3))$ on $S_0$, with $c < 0$ sufficiently small, we first translate that point to the origin
in~\eqref{eq:initial_problem}, which yields
\begin{subequations}%
	\label{eq:manif_translated_system}
	\begin{align}
		u_t &= u_{xx} - v + u^2 + 2c u + \widetilde H^u(u,v,\varepsilon)  && \text{for } x\in (-a,a) \text{ and } t>0, \\ 
		v_t &= \varepsilon \left( v_{xx} - 1 + H^v(u,v, \varepsilon) \right) && \text{for } x\in (-a,a) \text{ and } t>0, \\
		& u_x(t,x) = 0 = v_x(t,x) && \text{for } x= \pm a\text{ and } t>0.
	\end{align}
\end{subequations}
Here, $\widetilde H^u$ are new higher-order terms that are obtained from $H^u$ post-translation.
We choose 
	\begin{equation} 
		X = L^2(-a, a)\quad\text{and}\quad Y = L^2(-a, a) 
	\end{equation}
	as the basis spaces for $u$ and $v$, respectively, and  consider 
	$X_\alpha = H^{2\alpha}(-a,a)$ and $Y_\alpha = H^{2 \alpha}(-a,a)$ 
	for $\alpha \in [0, 1)$.
The linear operators $L_1$ and $L_2$ are defined as $L_1 u= u_{xx} + 2c u$ and
	$L_2= v_{xx}$, respectively, with $\mathcal D(L_1) = \mathcal D(L_2) = \{ \phi \in  H^2(-a,a) \; : \; \phi_x(-a) = 0 = \phi_x(a) \}$. 

Since we are interested in a neighbourhood of the origin in \eqref{eq:manif_translated_system} and by rescaling $v=\kappa_v \tilde v$, for any $\kappa_v>0$, we consider the modified nonlinear terms 
\begin{subequations}%
	\begin{align}
		f(u, v) &=  -\kappa_v v +  \chi(u) u^2  +  \chi(u) \chi(v) \widehat H^u \quad\text{and} \\
		g(u, v) &=   -\kappa_v^{-1} + \chi(u) \chi(v) \widehat H^v ,
	\end{align}
\end{subequations}		
where  $\chi : H^2(-a,a) \rightarrow [0, 1]$ is such that 
\[
	\chi (u) = 1 \; \text{ if } \Vert u \Vert_{H^2} \le \sigma^2, \quad 
		\chi (u) = 0 \; \text{ if } \; \Vert u \Vert_{H^2} \ge 2 \sigma, \quad\text{and}\quad 
		\Vert D \chi \Vert_{\mathcal L(H^2, \bbR)} \le \sigma
\]
for some $0 < \sigma < 1$ and $\widehat H^u$ and $\widehat H^v$ denote the higher-order terms with rescaled $\tilde v=v/\kappa_v$, where the tilde is omitted. Then, these modified nonlinearities 
		\begin{equation}
	f: H^2(-a,a) \cross L^2(-a,a) \rightarrow L^2(-a,a) \quad\text{and}\quad g: H^2(-a,a) \cross H^2(-a,a) \rightarrow H^2(-a,a)
		\end{equation}
	satisfy
		\begin{equation}%
			\label{eq:manif_nonlinearities}
			\begin{aligned}
				\Vert D f(u, v) \Vert_{\mathcal L(H^2 \cross L^2, L^2)} &\le L_{f_1}, \\
				\Vert D f(u, v) \Vert_{\mathcal L(H^2 \cross H^2, H^2)} &\le L_{f_2},\quad\text{and} \\
				\Vert D g(u, v) \Vert_{\mathcal L(H^2 \cross H^2, H^2)} &\le L_g,
			\end{aligned}
		\end{equation}
where $\mathcal L(V,W)$ is the space of linear operators from $V$ into $W$. Define $L_f := \min
\{ L_{f_1}, L_{f_2} \}$ and note that, by choosing $\sigma>0$ small, the constants $L_{f}$ and
$L_g$ can be made appropriately small. 

Note also that, for any $\varepsilon >0$, there exists $k>0$ such that $\varepsilon \lambda_k = \calO(1)$,
where $\lambda_k = - \frac{k^2 \pi ^2}{4 a^2}, k =0, 1, \dots$, are the eigenvalues of the operator $L_2$
with zero Neumann boundary conditions. Thus, to define fast and slow variables, we need
to split the basic space $Y=L^2(-a,a)$ for $v$ into $Y = Y^\zeta_S \oplus Y^\zeta_F$, where
\begin{subequations}%
	\label{eq:Y_split}
	\begin{align}
		Y^\zeta_S &:= \operatorname{span} \left\{ e_k(x) : 0 \le k \le k_0 \right\}\quad\text{and} \\
		Y^\zeta_F &:= \overline{\operatorname{span} \left\{e_k(x) : k > k_0 \right\}}^{L^2},
	\end{align}
\end{subequations}
with $\{e_k(x)\}_{k \in \mathbb N}$ being the eigenfunctions of the operator $L_2$ corresponding to
the eigenvalues $\{\lambda_k\}_{k \in \mathbb N}$ and $k_0 \in \bbN$ satisfying
\begin{equation}%
	\label{eq:zeta-and-k0}
	- \frac{{(k_0 + 1)}^2 \pi^2}{4a^2} < \zeta^{-1} \omega_A \le - \frac{k_0^2 \pi^2}{4 a^2}, 
\end{equation}
for  given $\zeta>0$ and $\omega_A \in (2c, 0)$.

Then, for the semigroups generated by $-B_S$ and $B_F$, which are the realisations of the operator $L_2$
in  $Y_S^\zeta \cap L^2(-a, a)$ and $Y_F^\zeta \cap L^2(-a, a)$, respectively, we have the following estimates:
\begin{subequations}\label{estim_semigroup}
	\begin{align}
		\Vert e^{-t B_S} y_S \Vert_{H^2} &\le e^{\frac{\pi^2 k_0^2}{4 a^2} t} \Vert y_S \Vert_{H^2} \quad  && \text{ for } y_S \in Y_S^\zeta, \\
		\Vert e^{t B_F} y_F \Vert_{H^2} &\le e^{-\frac{\pi^2 {(k_0 + 1)}^2}{4 a^2} t} \Vert y_F \Vert_{H^2} \quad && \text{ for } y_F \in Y_F^\zeta \cap H^2(-a,a),
	\end{align}
\end{subequations}
see e.g.~\cite[p.20]{Henry}.

Now, using \eqref{eq:Y_split} and the estimates in \eqref{estim_semigroup} and following the proof
of~\cite[Theorem 2.4]{EngelHummelKuehn} and \cite{HummelKuehn}, we obtain the stated results.
\end{proof} 
\begin{remark}
Here, we have written $Y_S^\zeta$ instead of $Y_S^\zeta \cap H^2(-a,a)$, as $Y_S^\zeta$ is a finite-dimensional subspace of $H^2(-a,a)$. 
\end{remark}
Next, for given $\zeta>0$, we also split the space $X=L^2(-a,a)$ into $X= X^\zeta_S \oplus
X^\zeta_F$, where $X^\zeta_S$ and $X^\zeta_F$ are defined in the same manner as $Y^\zeta_S$ and
$Y^\zeta_F$, see~\eqref{eq:Y_split}.

Then, the truncation of the Galerkin system in~\eqref{eq:infinite_ode_system} at $k_0$, which is related to $\zeta$ via~\eqref{eq:zeta-and-k0}, gives the projection
of~\eqref{eq:manif_translated_system} onto $\big( X^\zeta_S, Y^\zeta_S \big)$. Thus, we obtain a family of so-called Galerkin manifolds
\begin{equation}%
	G_{\varepsilon, \zeta} := \left\{ \left( h_G^{\varepsilon, \zeta} (v), v \right) : v \in
		Y^\zeta_S \right\}
\end{equation}
for a function $h^{\varepsilon, \zeta}_G : Y^\zeta_S \rightarrow X^\zeta_S$. 

\begin{prop}\label{Prop_3_3}
	There exists a constant $\tilde C>0$ such that, for $0 < \varepsilon < C
	\frac{\omega_f}{\omega_A} \zeta$, with $\zeta$, $\omega_A$, and $\omega_f$ as in
	Proposition~\ref{Prop_3_2} and some fixed $C \in (0, 1)$, the following estimate holds:
	\begin{equation}%
		\left\Vert h^{\varepsilon, \zeta}_X(v) - h_G^{\varepsilon, \zeta} (v) \right\Vert_{H^{2}} + \left\Vert h^{\varepsilon,
		\zeta}_{Y^\zeta_F}(v) \right\Vert_{H^{2}} \le \tilde C \left( \frac{4a^2}{\pi^2 (2 k_0 + 1)} + \zeta
		\right) \Vert v \Vert_{H^{2}}.
	\end{equation}
	In particular, using the relation between $\zeta$ and $k_0$ in~\eqref{eq:zeta-and-k0}, we have
	\begin{equation}%
		\left\Vert h^{\varepsilon, \zeta}_X(v) - h_G^{\varepsilon, \zeta} (v) \right\Vert_{H^{2}} + \left\Vert h^{\varepsilon,
		\zeta}_{Y^\zeta_F}(v) \right\Vert_{H^{2}} \le \tilde C \frac{1}{k_0} \Vert v \Vert_{H^{2}}.
	\end{equation}
\end{prop}
\begin{proof}
The proof follows the same steps as in~\cite{EngelHummelKuehn}.
\end{proof}

\begin{remark} 
	Note that $k_0 \to \infty$ corresponds to $\zeta \to 0$ which, due to the relation
	$0<\varepsilon < C \frac{\omega_f}{\omega_A} \zeta$, see Propositions~\ref{Prop_3_2}
	and~\ref{Prop_3_3}, also implies $\varepsilon \to 0$ when $k_0 \to \infty$. Hence, the limit of
	the Galerkin manifolds $G_{\varepsilon, \zeta}$ as $k_0 \to \infty$ cannot, in general, be
	guaranteed uniformly in $\varepsilon$. Thus, we perform the following analysis
	for $\varepsilon\in(0,\varepsilon_0)$, with $\varepsilon_0$ sufficiently small, and $k_0$
	arbitrarily large, but fixed. 
\end{remark}
%
%
%
%
%
\section{Fast-slow analysis}%
\label{sec:fast-slow-analysis}

Consider an arbitrary, fixed $k_0 \in \bbN$ in~\autoref{prop:discretisation}. A rescaling of the
variables in \eqref{eq:galerkin_system_original} via $u_k \mapsto a^{-1/2} u_k$ and $v_k \mapsto
a^{-1/2} v_k$ gives the fast-slow system
\begin{subequations}%
	\label{eq:galerkin_system_temp}
	\begin{align}
		 u_1^\prime &= - v_1 + 2^{-1/2} u_1^2 + 2^{-1/2} \sum_{j = 2}^{k_0} u_j^2 + H^u_1, \\
		 v_1^\prime &= -2^{1/2} \varepsilon, \\
		\label{eq:galerkin_system_temp_uk}
		u_k^\prime &= \frac{1}{4} a^{-2} b_k u_k - v_k + 2^{1/2} u_1 u_k + 
			\sum_{i, j = 2}^{k_0} \eta^k_{i, j} u_i u_j + H^u_k, \\
		\label{eq:galerkin_system_temp_vk}
		 v_k^\prime &= \frac{1}{4} a^{-2} b_k \varepsilon v_k + \varepsilon H^v_k,
	\end{align}
\end{subequations}
for $2\leq k\leq k_0$. The rescaled system in~\eqref{eq:galerkin_system_temp} is equivalent to the
original one in~\eqref{eq:galerkin_system_original}, in that orbits of the latter are mapped to
those of the former. Thus, without loss of generality, in our analysis, we will henceforth focus
on~\eqref{eq:galerkin_system_temp}.
In the slow time variable $\tau = \varepsilon t$, Equation~\eqref{eq:galerkin_system_temp}
becomes
\begin{subequations}%
	\label{eq:galerkin_system_temp_slow}
	\begin{align}
		 \varepsilon \dot u_1 &= - v_1 + 2^{-1/2} u_1^2 + 2^{-1/2} \sum_{j = 2}^{k_0} u_j^2 + H^u_1, \\
		 \dot v_1 &= -2^{1/2} , \\
		\varepsilon \dot u_k &= \frac{1}{4} a^{-2} b_k u_k - v_k + 2^{1/2} u_1 u_k + 
			\sum_{i, j = 2}^{k_0} \eta^k_{i, j} u_i u_j + H^u_k, \\
		 \dot v_k &= \frac{1}{4} a^{-2} b_k v_k + H^v_k,
	\end{align}
\end{subequations}
with the overdot denoting differentiation with respect to $\tau$.

Recalling the system of PDEs in \eqref{eq:initial_problem}, where the singularity is located at the origin of $L^2(-a, a)$, we will
be considering initial data in a neighbourhood thereof in the $L^2$-norm, with
\begin{equation}%
	\label{eq:initial-data-L2}
	\sum_{k = 1}^{\infty} |u_k(0)|^2  \le \kappa\quad\text{and}
		\quad \sum_{k = 1}^{\infty} |v_k(0)|^2 \le \kappa,
\end{equation}
%
%
where $0<\kappa <1$. In addition, we impose the bounds 
\begin{equation}%
	\label{eq:initial-data}
	|u_k(0)| \le C_{k, u_0} \quad\text{and}\quad |v_k(0)| \le C_{k, v_0}\varepsilon^{4/3} \quad \text{for } 
		k = 2, 3, \dots, k_0,
\end{equation}
where $C_{k, u_0}$ and $C_{k, v_0}$ are positive constants.
The initial conditions for the first mode $\{u_1, v_1\}$ are taken as in the finite-dimensional
(planar) case \cite{KruSzm3}, and are specified in Equation~\eqref{eq:delta-in-definition} below.

The assumption in \eqref{eq:initial-data} implies that the higher-order modes $u_k(0)$ and $v_k(0)$,
corresponding to non-constant eigenfunctions, are sufficiently small. The requirement that $v_k(0)$
is of the order ${\cal O}(\varepsilon^{4/3})$ is essential for ensuring that $v_k(t)$ does not
exhibit finite-time blowup before transiting through a neighbourhood of the singularity at the
origin; see~\autoref{lemma:K1-estimates} for details and~\ref{subsec:example-k0-2} for an example.

\subsection{Critical manifold}

Clearly, the system in~\eqref{eq:galerkin_system_temp} is a fast-slow system in the standard form of
GSPT, with $\varepsilon$ the (small) singular perturbation parameter and $\{u_k\}$ and
$\{v_k\}$, $k=1,2,\dots,k_0$, the fast and slow variables, respectively. The critical manifold
$\calC$ for~\eqref{eq:galerkin_system_temp} is hence given, to leading order, 
as a graph over $(u_1, u_2, \dots, u_{k_0})$, with
\begin{subequations}%
	\label{eq:critical_manif_ode}
	\begin{align}
		v_1 = f_1(u_1, u_2, \dots, u_{k_0}) 
			&:= 2^{-1/2} u_1^2 + 2^{-1/2} \sum_{j = 2}^{k_0} u_j^2\quad\text{and} \\
		v_k = f_k(u_1, u_2, \dots, u_{k_0}) 
			&:= \frac{1}{4} a^{-2} b_k u_k + 2^{1/2} u_1 u_k 
				+ \sum_{i, j = 2}^{k_0} \eta^k_{i, j} u_i u_j
	\end{align}
\end{subequations}
for $k=2,\dots,k_0$. 
Note that, in general, $\calC$ is  not normally hyperbolic: it contains attracting and saddle-type regions, as
well as non-hyperbolic sets separating those regions; examples can be found in \ref{subsec:example-k0-2}.
Of particular interest is the submanifold $\calC_0 \subset \calC$ of the critical manifold $\calC$ which is defined as
\begin{equation}%
\label{eq:C_0-definition}
\begin{aligned}
	\calC_0 := \big\{ \left( u_1, \dots, u_{k_0}, f_1(u_1, \dots, u_{k_0}),   \dots, f_{k_0}(u_1, \dots, 
	u_{k_0} \right) \in \calC \; : \qquad \\  \; u_1 < 0\text{ and } u_k = 0 \text{ for } 2 \le k \le k_0 \big\}.
 \end{aligned} 
\end{equation} 
In other words, $\calC_0$ is obtained by setting $u_k = 0$ for $k=2,\dots,k_0$
in~\eqref{eq:critical_manif_ode}, and can hence be written as the curve 
\begin{equation}
\begin{multlined}
	\calC_0 = \big\{ (u_1, \dots, u_{k_0}, v_1, \dots, v_{k_0}) \in \bbR^{2k_0} :
	 	v_1 =  2^{-1/2} u_1^2 + \calO \left( u_1^3 \right),\\
		\text{ with } u_1 < 0  
		\text{ and }  u_k = 0 = v_k \text{ for } 2 \le k \le k_0 \big\}
		\end{multlined}
\end{equation}
that lies in the $(u_1,v_1)$-plane. The set $\calC_0$ corresponds directly to the set of constant
functions $S_0$, given by~\eqref{eq:critical-manifold-pde}. We will denote the slow manifold that is
obtained from $\calC_0$ via GSPT by either $\calC_\varepsilon$ or $\calC_{\varepsilon, k_0}$, to
emphasise the dependence thereof on $k_0$.
\begin{remark}
	Note that in Section~\ref{sec:slow-and-galerkin}, both $Y^\zeta_S$ and $X^\zeta_S$ are finite-dimensional, and that
	$G_{\varepsilon, \zeta}$ can hence be viewed as the Fenichel slow manifold perturbing off the normally hyperbolic
	subset $\calC_0$ of the critical manifold of the fast-slow system in~\eqref{eq:galerkin_system_temp},
	for $k_0$ defined by $\zeta$ through~\eqref{eq:zeta-and-k0}.
\end{remark}

\begin{lemma}
	The subset $\calC_0$ of the critical manifold $\calC$ is normally hyperbolic and attracting
	under the layer flow that is obtained for $\varepsilon = 0$ in \eqref{eq:galerkin_system_temp}.
\end{lemma}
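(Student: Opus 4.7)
The strategy is to evaluate, at an arbitrary point of $\calC_0$, the Jacobian of the fast vector field in \eqref{eq:galerkin_system_temp} with respect to the fast variables $(u_1,\dots,u_{k_0})$ and to verify that all its eigenvalues have strictly negative real part. The layer problem is obtained by setting $\varepsilon=0$ in \eqref{eq:galerkin_system_temp}: the $v_k'$-equations become trivial, so the slow variables are frozen and every point of $\calC_0$ is an equilibrium of the resulting parametrised family of ODEs in $(u_1,\dots,u_{k_0})$.

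The crucial observation I would exploit is that $\calC_0$ is cut out by $u_k=0$ and $v_k=0$ for $2\le k\le k_0$, together with $v_1=2^{-1/2}u_1^2+\calO(u_1^3)$ and $u_1<0$; consequently, every cross-derivative $\partial u_k'/\partial u_j$ with $j\neq k$ vanishes on $\calC_0$. For the block $j,k\ge 2$ this is because the only off-diagonal contributions come from the quadratic coupling $\sum_{i,\ell}\eta^k_{i,\ell}u_iu_\ell$ and from the remainder $H^u_k$, whose $u$-derivatives always leave at least one factor $u_i$ or $v_j$ with $i,j\ge 2$. The same mechanism kills the cross terms $\partial u_1'/\partial u_k$ and $\partial u_k'/\partial u_1$ ($k\ge 2$), by a direct inspection of the explicit lists of higher-order terms in \autoref{prop:discretisation}. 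Hence the Jacobian is diagonal on $\calC_0$, with entries
\[
	\frac{\partial u_1'}{\partial u_1}\bigg|_{\calC_0}=\sqrt{2}\,u_1+\calO(u_1^2)
	\quad\text{and}\quad
	\frac{\partial u_k'}{\partial u_k}\bigg|_{\calC_0}=\frac{b_k}{4a^2}+\sqrt{2}\,u_1+\calO(u_1^2)
\]
for $2\le k\le k_0$, where the $\calO(u_1^2)$ corrections come from $\partial H^u_j/\partial u_j$ evaluated on $\calC_0$ and from $v_1=\calO(u_1^2)$. Since $u_1<0$ along $\calC_0$ and $b_k=-(k-1)^2\pi^2<0$ for $k\ge 2$, both contributions are strictly negative for each $k$, and the higher-order correction does not flip the sign provided $|u_1|$ is small enough. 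The spectrum thus lies in the open left half-plane, which gives normal hyperbolicity and attractivity.

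The only real obstacle is organisational rather than analytic: one must check term by term that none of the nonlinearities catalogued in \autoref{prop:discretisation}, in particular the sums over $\eta^k_{i,\ell}$ and the cross-mode remainders in $H^u_k$, produce non-vanishing off-diagonal contributions at $\calC_0$. Once this reduction to a diagonal Jacobian is in place, the eigenvalue computation and the sign check are immediate.
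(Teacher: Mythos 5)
Your proof is correct and follows essentially the same approach as the paper's: both compute the Jacobian of the layer problem with respect to the fast variables $(u_1,\dots,u_{k_0})$ at a point of $\calC_0$, observe that it is diagonal there, and conclude from $u_1<0$ and $b_k<0$ that all diagonal entries are strictly negative. The only difference is presentational — you explicitly track the $\calO(u_1^2)$ corrections from $H^u_j$ and from $v_1=\calO(u_1^2)$ and note they cannot flip the sign for $|u_1|$ small, while the paper states the diagonal Jacobian directly and implicitly absorbs those corrections into the assumption that $u_1$ is negative and bounded away from zero.
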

\begin{proof}
	Linearising the fast flow of \eqref{eq:galerkin_system_temp} about $\calC_0$, we find the Jacobian matrix
	\begin{equation}
		\mathrm{diag} \left\{ 2^{1/2} u_1, 2^{1/2} u_1 + \frac{1}{4} a^{-2} b_2, \dots,  2^{1/2} u_1 +
		\frac{1}{4} a^{-2} b_{k_0} \right\},
	\end{equation}
	which implies that $\calC_0$ is normally hyperbolic and attracting for $u_1 < 0$ bounded away from zero. (Recall that
	$b_k < 0$ for $k \in \bbN$ and $k\neq 1$.)
\end{proof}
Since the eigenvalues of a matrix depend continuously on its entries, and since the eigenvalues of
the linearisation about $\calC_0$ are all strictly negative, there exists a full neighbourhood
around $\calC_0$ in $\calC$, with $u_1 < 0$ bounded away from zero, which is normally hyperbolic and
attracting under the layer flow of \eqref{eq:galerkin_system_temp}. The flow in that neighbourhood
is directed towards the origin where, as can be seen from the above linearisation, normal
hyperbolicity is lost and which is hence a partially degenerate steady state
of~\eqref{eq:galerkin_system_temp}. The description of the dynamics near the origin therefore
requires the application of geometric desingularisation.

\subsection{Statement of main result}

We are now ready to formulate our main result, which concerns the transition between two
appropriately defined sections $\DeltaIn$ and $\DeltaOut$ for the flow generated
by~\eqref{eq:galerkin_system_temp}. These sections of the phase space are defined as follows:
consider the set
\begin{equation}
	\{ (u_1, v_1) \; : \; u_1 \in J\text{ and }v_1 = \rho^2 \} \subset \bbR^{k_0} \cross \bbR^{k_0}
\end{equation}
for small $\rho > 0$ and a suitable interval $J$, and let $\DeltaIn$ be a neighbourhood of that set
in $\bbR^{k_0} \cross \bbR^{k_0}$. Similarly, define $\DeltaOut$ as a neighbourhood of the set 
\begin{equation}
	\{ (u_1, v_1) \; : \; u_1 = \rho\text{ and }v_1 \in \bbR \} \subset \bbR^{k_0} \cross \bbR^{k_0}
\end{equation}
that is contained in the $(u_1, v_1)$-plane. More explicitly, let
\begin{equation}%
	\label{eq:delta-in-definition}
	\begin{aligned}
		\DeltaIn = \Big\{ &(u_1, \dots, u_{k_0}, v_1, \dots, v_{k_0}) \in \bbR^{2k_0} : 
		u_1 \in \left( -2^{1/4} \rho - \Cin_{u_1}, -2^{1/4} \rho + \Cin_{u_1} \right) , 
		v_1 = \rho^2, |u_k| \le \Cin_{u_k}, \\ 
		&\text{ and }|v_k| \le \Cin_{v_k}\text{ for } 2\leq k\leq k_0 \Big\}
	\end{aligned}
\end{equation}
and
\begin{equation}%
	\label{eq:delta-out-definition}
	\begin{aligned}
		\DeltaOut = \Big\{ (u_1, \dots, u_{k_0}, v_1, \dots, v_{k_0}) \in \bbR^{2k_0} : u_1 = \rho,
	 v_1 \in \bbR, |u_k| \le \Cout_{u_k}, 
		\text{ and } |v_k| \le \Cout_{v_k}\text{ for }
		2\leq k \leq k_0 \Big\},
	\end{aligned}
\end{equation}
where $\Cin_{u_1}$, $\Cin_{u_k}$, $\Cin_{v_k}$, $\Cout_{u_k}$, and $\Cout_{v_k}$, for $2 \le k \le k_0$,
are appropriately chosen small constants.
\begin{figure} 
	\centering
	\begin{overpic}[scale=0.65]{./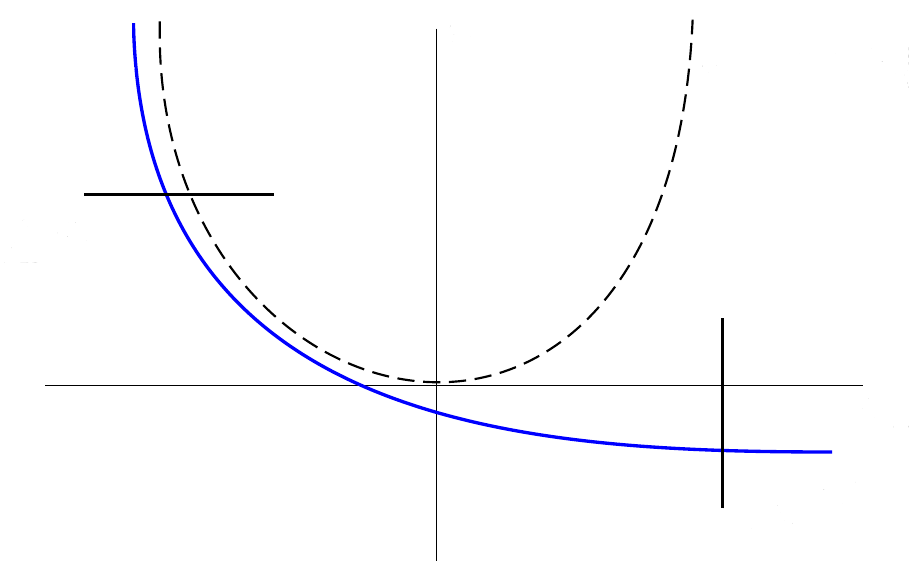}
		\put(3, 38){$\DeltaIn$}
		\put(80, 5){$\DeltaOut$}
		\put(12, 50){\textcolor{blue}{$\calC_\varepsilon$}}
		\put(20, 53){$\calC_0$}
		\put(78, 50){$v_1 = 2^{-1/2} u_1^2$}
		\put(50, 58){$v_1$}
		\put(97, 18){$u_1$}
	\end{overpic}
	\caption{Illustration of the main result, Theorem~\ref{prop:transition_full}, in its projection onto the $(u_1, v_1)$-plane. The sections
	$\DeltaIn$ and $\DeltaOut$ are, in fact, full neighbourhoods around the shown line
	intervals in $u_1$ and $v_1$. Given $k_0 \in \bbN$ fixed, trajectories of~\eqref{eq:galerkin_system_temp} that are
	initiated in $\DeltaIn$ will intersect $\DeltaOut$ transversely for $\varepsilon$ sufficiently
	small.}
\end{figure}
Given these definitions, we have the following result on the transition map between the sections
$\DeltaIn$ and $\DeltaOut$ that is induced by the flow of \eqref{eq:galerkin_system_temp}.
\begin{theorem}%
	\label{prop:transition_full}
	Fix $k_0 \in \bbN$, and consider the subset $\Rin \subset \DeltaIn$ defined by
	\begin{equation}%
		\begin{aligned}
			\Rin = \Rin(\varepsilon) := \Big\{ &(u_1, \dots, u_{k_0}, v_1, \dots, v_{k_0}) \in
		\bbR^{2k_0} : u_1 \in \left( -2^{1/4} \rho - \Cin_{u_1}, -2^{1/4} \rho + \Cin_{u_1} \right), 
		v_1 = \rho^2, |u_k| \le	\Cin_{u_k}, \\
			&\text{ and } \; |v_k| \le \Cin_{v_k} \varepsilon^{4/3}\text{ for }2\leq k\leq
			k_0 \Big\}.
		\end{aligned}
	\end{equation}
	Then, there exists $\varepsilon_0(k_0)$ such that for $0 < \varepsilon < \varepsilon_0$, the
	system in~\eqref{eq:galerkin_system_temp} admits a well-defined transition map
	\[
		\Pi : \Rin \rightarrow \DeltaOut.
	\] 
	Let $(\uonein, \vonein, \ukin, \vkin) \in \Rin$ and
	\[
		(\uoneout, \voneout, \ukout, \vkout) := \Pi (\uonein, \vonein, \ukin, \vkin);
	\]
	then,
	\begin{equation}
		\begin{aligned} 
			|\voneout| &= \calO\left(\varepsilon^{2/3}\right),  && \uoneout = \rho, \\
		|\ukout| & \le C |\ukin|,\quad{and} && |\vkout| \le C |\vkin| 
		\end{aligned}
	\end{equation}
	for $2\leq k\leq k_0$ and a positive generic constant $C$ which may differ between estimates.
	In particular, the slow manifolds $\calC_{\varepsilon}$ cross the section $\DeltaOut$
	transversely.  In addition, the restriction of $\Pi$ to $I := \{ (u_1, u_k, v_1, v_k) \in \Rin :
	u_k, v_k\text{ fixed for } 2 \le k \le k_0\}$ is a contraction with rate $e^{-c/\varepsilon}$
	for any suitable choice of $\{u_k, v_k\}$ and some constant $c > 0$.
\end{theorem}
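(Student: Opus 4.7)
The plan is to decouple the analysis into (i) the planar fold dynamics in the $(u_1,v_1)$-plane, treating the higher modes as higher-order perturbations, and (ii) a priori estimates on the higher modes $(u_k,v_k)$ for $2\le k\le k_0$ that hold uniformly across the transit. The transition map $\Pi$ is then constructed by concatenating these two estimates, with the transversality of $\calC_\varepsilon$ with $\DeltaOut$ falling out of the planar fold estimates.

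\textbf{Step 1: Planar fold via blow-up.} In the $(u_1,v_1)$-subsystem, after collecting the higher modes into forcing terms that we will show are small, one obtains a perturbation of the canonical planar fold of~\cite{KruSzm3}. I would apply the weighted blow-up $(u_1,v_1,\varepsilon)=(r\bar u_1,r^2\bar v_1,r^3\bar\varepsilon)$ at the origin, and analyse the induced flow in the standard three charts: the entry chart $K_1$ (with $\bar v_1=1$), the rescaling/central chart $K_2$ (with $\bar\varepsilon=1$), and the exit chart $K_3$ (with $\bar u_1=1$). In $K_1$ one extends the attracting slow manifold $\calC_\varepsilon$ up to an $\calO(\varepsilon^{2/3})$-neighbourhood of the origin; in $K_2$ one tracks it across the fold using a Riccati-type reduced equation whose special solution governs the exit value $\voneout=\calO(\varepsilon^{2/3})$; in $K_3$ one shows that $\uoneout$ lies exponentially close to its unperturbed value, so that $|\uoneout-\uonein|\le Ce^{-c/\varepsilon}$. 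The transversality of $\calC_\varepsilon$ with $\DeltaOut$ is then read off in $K_3$ from the classical non-degeneracy of the fold.

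\textbf{Step 2: A priori control of higher modes.} For $2\le k\le k_0$, the linear operator in the $u_k$-equation reads $\tfrac14 a^{-2}b_k+\sqrt{2}\,u_1$; since $b_k\le -\pi^2$, a preparatory smallness assumption on the fold-region exit value $\rho$ (after the domain rescaling) guarantees this coefficient stays strictly negative throughout the transit. A Gronwall estimate applied chart-by-chart, bootstrapped with the quadratic cross-terms $\eta^k_{i,j}u_iu_j$ treated as small by induction on the smallness of the $\Cin_{u_k},\Cin_{v_k}$, then yields $|\ukout|\le C|\ukin|$. For the slow higher modes, $v_k$ solves a linear equation with negative drift $\varepsilon\tfrac14 a^{-2}b_k$ forced by $\varepsilon H^v_k=\calO(\varepsilon v_1v_k,\varepsilon v_iv_j)$; under the hypothesis $|\vkin|\le \Cin_{v_k}\varepsilon^{4/3}$, integrating over the slow transit time $\calO(\varepsilon^{-1/3})$ keeps $v_k$ of order $\varepsilon^{4/3}$, giving $|\vkout|\le C|\vkin|$. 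Feeding these bounds back into Step~1 confirms that the $(u_k,v_k)$-contributions to the $(u_1,v_1)$-equations are indeed subleading perturbations of the canonical fold, so the Step~1 conclusions persist.

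\textbf{Main obstacle.} The hardest part is ensuring that no mode blows up in finite time before the orbit reaches $\DeltaOut$, especially in the rescaling chart $K_2$, where the fast time scale is $\calO(\varepsilon^{-1/3})$ and the cubic/quadratic couplings $u_1u_k$, $u_iu_j$, $v_1v_k$ threaten to amplify small data. The delicate point is calibrating the initial scaling $|\vkin|\le \Cin_{v_k}\varepsilon^{4/3}$: any weaker bound makes the forcing $\varepsilon v_1 v_k$ on the $v_k$-equation dominate the dissipative drift, triggering the possibility of finite-time blowup of the type flagged in Lemma~\ref{lemma:K1-estimates}. Accordingly, I would carry out the higher-mode a priori estimates chart-by-chart in the blown-up space (rather than globally in time), so that each Gronwall constant stays $\calO(1)$ uniformly in $\varepsilon$, and then glue them along the chart transitions; this is also where the assumption $H^v=\calO(v^2)$ and the orthogonality of $H^v$ to constants (which kills the $v_1$-forcing from $H^v$) enter crucially.
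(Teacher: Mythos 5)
Your high-level plan -- a three-chart blow-up of the fold combined with Gronwall-type estimates controlling the higher modes chart-by-chart, with the initial scaling $|\vkin|\le \Cin_{v_k}\varepsilon^{4/3}$ calibrated to prevent finite-time blow-up -- captures the structure of the paper's argument, and your diagnosis of the main obstacle (finite-time blow-up amplified by the cross-couplings) is correct. However, two concrete steps of your proposal would fail as written, and both are in fact the key technical novelties of the paper's proof.

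\textbf{Blowing up only $(u_1,v_1,\varepsilon)$ does not desingularise the system.} You propose to apply the weighted blow-up $(u_1,v_1,\varepsilon)=(r\bar u_1,r^2\bar v_1,r^3\bar\varepsilon)$ while leaving the higher modes $(u_k,v_k)$, $2\le k\le k_0$, at their original scale and treating them as forcing. But after the transformation in chart $K_1$ (say), the $u_{1,1}$-equation (after dividing out $r_1$) contains the term $2^{-1/2}r_1^{-2}\sum_{j\geq2}u_j^2$, which is singular as $r_1\to0$ unless the $u_j$ themselves decay like $r_1$. Similarly, the $u_k$-equation divided by $r_1$ contains $r_1^{-1}v_k$ and related singular terms. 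A Gronwall bound of the form $|u_k(t)|\le C|u_k(0)|$ is not enough; you must show $u_k=\calO(r_1)$ along the transit. The paper handles this by including \emph{all} modes in the quasi-homogeneous blow-up $(u_k,v_k,\varepsilon)=(\bar r\bar u_k,\bar r^2\bar v_k,\bar r^3\bar\varepsilon)$ for $1\le k\le k_0$, so that the rescaled variables $u_{k,1}=u_k/r_1$, $v_{k,1}=v_k/r_1^2$ appear in a bounded system; the fixed-point/variation-of-constants arguments (Lemmas~\ref{lemma:K1-estimates}, \ref{lemma:estimates-K2}, and the proof of Proposition~\ref{prop:K3:transition}) then establish uniform bounds on $u_{k,i},v_{k,i}$ in each chart, which automatically encode the needed dipping of $u_k,v_k$ to $\calO(\varepsilon^{1/3}),\calO(\varepsilon^{2/3})$ near the fold.

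\textbf{You do not explain -- and cannot sidestep -- the $\varepsilon$-dependent domain rescaling.} You write ``after the domain rescaling'' parenthetically, but the point is essential and nontrivial: the Galerkin coefficients $\tfrac14a^{-2}b_k$ in the $u_k$- and $v_k$-equations are $\varepsilon$-independent constants, which are incompatible with the quasi-homogeneous blow-up weights $1:2:3$. The paper introduces $a=A\varepsilon^{p}$ with $p=-1/6$ precisely so that $\tfrac14a^{-2}b_k=\tfrac{b_k}{4A^2}\varepsilon^{1/3}$ carries the $\varepsilon$-weight needed for all terms of each equation to balance after desingularisation (e.g.~$\tfrac{b_k}{4A^2}\varepsilon_1^{1/3}u_{k,1}$ appears at the same order as $-v_{k,1}$ in chart $K_1$). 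Without this, the chart-$K_1$ vector field has a $\calO(1/r_1)$ singular coefficient multiplying $u_{k,1}$, and the blow-up locus $\{r_1=0\}$ carries no well-defined flow. Moreover, this rescaling is what supplies the strong exponential damping $\exp\big(\tfrac{b_k}{4a^2r_3^{\mathrm{in}}}\big)$ that controls the (positively-eigenvalued) $u_{k,3}$-directions in the exit chart (Lemma~\ref{lemma:K3-eigenvalues-1} and the proof of Proposition~\ref{prop:K3:transition}). In your proposal it is not clear how the $u_{k,3}$-growth with rate $\sqrt2/2$ in chart $K_3$ is tamed; the paper needs this extra $\varepsilon$-dependent damping, which only appears after the domain rescaling, to close the argument.

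Aside from these two points, your identification of the role of the $\varepsilon^{4/3}$ initial scaling and of the orthogonality of $H^v$ to constants is accurate, and the strategy of gluing chart transitions is what the paper does in Section~\ref{subsec:transition_map} (composing $\Pi_3\circ\kappa_{23}\circ\Pi_{2,k_0}\circ\kappa_{12}\circ\Pi_{1,k_0}$). Incorporating the full $(2k_0+1)$-variable blow-up and the domain rescaling, and reworking your Gronwall estimates into the variation-of-constants fixed-point argument of Lemma~\ref{lemma:K1-estimates}, would bring your plan into alignment with the actual proof.
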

\begin{remark}
	In~\eqref{eq:galerkin_system_temp}, the equations for $(u_1, v_1)$ reduce to those for the
	classical singularly perturbed planar fold \cite{KruSzm3} if we set $u_k = 0$ for $ 2 \le k \le
	k_0$. Here, we perform a similar analysis as in~\cite{KruSzm3} while controlling the
	higher-order modes $\{u_k, v_k\}$, $2 \le k \le k_0$. Note that we are restricting to initial
	data for the system of PDEs in \eqref{eq:initial_problem} that are close to constant functions,
	which translates to small initial data $\{u_k(0), v_k(0)\}$ for the system of ODEs in
	\eqref{eq:galerkin_system_temp}.  As mentioned already, the dependence on $\varepsilon$ in the
	initial values for $v_k$, $2 \le k \le k_0$, is essential to ensure that trajectories of the
	Galerkin system in \eqref{eq:galerkin_system_temp} do not exhibit finite-time blowup before
	reaching $\DeltaOut$; cf.~again Section~\ref{desingul} for the corresponding estimates
	and~\ref{subsec:example-k0-2} for an illustrative example.
\end{remark}

%
%
%
%
%
\section{Geometric desingularisation}\label{desingul}
To describe the dynamics of the system of equations in~\eqref{eq:galerkin_system_temp} near the
origin, which is a partially degenerate steady state,  we will apply the method of geometric
desingularisation by considering $\varepsilon$ as a variable in \eqref{eq:galerkin_system_temp},
which is included in the quasi-homogeneous spherical coordinate transformation
\begin{equation}
	\label{eq:blowup_transform}
	u_k = \bar r^{\alpha_k} \bar u_k,\quad v_k = \bar r^{\beta_k} \bar v_k,\quad\text{and}\quad 
	\varepsilon = \bar r^\gamma \bar \varepsilon.
\end{equation}
Here, $k = 1, 2, \dots, k_0$ and $(\bar u_1, \bar v_1, \dots, \bar u_{k_0}, \bar v_{k_0},
\bar\varepsilon) \in \mathbb{S}^{2 k_0}$, with $\mathbb{S}^{2 k_0}$ denoting the $2k_0$-sphere in
$\mathbb{R}^{2k_0+1}$ and $r\in[0,r_0]$, for $r_0>0$ sufficiently small. The weights $\alpha_k$,
$\beta_k$, and $\gamma$ in \eqref{eq:blowup_transform} will be determined by a rescaling argument
below.

In analogy to the desingularisation of the well-known planar fold via blow-up, performed in~\cite{KruSzm3}, we shall introduce
three coordinate charts $K_1$, $K_2$, and $K_3$, which are formally obtained by setting 
$\bar v_1 = 1$, $\bar\varepsilon = 1$, and $\bar u_1 = 1$, respectively, in \eqref{eq:blowup_transform}. As is convention, we will denote the variables
corresponding to $u_k$, $v_k$, and $\varepsilon$ in chart $K_i$ ($i=1,2,3$) by $u_{k,i}$, $v_{k,i}$, and $\varepsilon_{i}$, respectively.

In a nutshell, our strategy will be to retrace the analysis in \cite{KruSzm3} in each of these
charts; crucially, we will need to control the higher-order modes in
\eqref{eq:galerkin_system_temp}, i.e., the variables $\{u_k, v_k\}$ for $k=2,\dots,k_0$, in the
process. To be precise, we will verify that these additional variables will either remain uniformly
bounded (in $\varepsilon$ and $k$) or decay in the transition through the coordinate charts $K_1$,
$K_2$, and $K_3$. 

A significant challenge to our proposed strategy stems from the fact that, without taking into
consideration the length of the spatial domain $a$, one cannot obtain non-trivial dynamics on the
so-called blow-up locus that is given by $\{\bar r = 0\}$. 
To overcome that challenge, we could include $a$ as an auxiliary variable in the quasi-homogeneous
blow-up transformation in \eqref{eq:blowup_transform} by writing $a = \bar r^\eta \bar a$, which is
the approach taken in~\cite{EngelKuehn1}. That approach, however, has the disadvantage that the
resulting vector fields are not even continuous for $a = 0$, as the exponent $\eta$ is negative.

A key novelty here, in comparison to~\cite{EngelKuehn1}, is that we adopt an alternative approach by
defining a new constant $A$ via
\begin{equation}%
	\label{eq:a_and_A}
	a = A \varepsilon^{p},
\end{equation}
with $p \in \bbR$ to be determined, which allows us to obtain non-trivial dynamics for $\bar r = 0$
without the conceptual difficulties encountered in \cite{EngelKuehn1}.
Regardless of the approach used, it appears that some rescaling of the domain in
\eqref{eq:initial_problem} is necessary to perform a successful geometric desingularisation, which
is an intrinsic consequence of the Galerkin system in \eqref{eq:galerkin_system_temp} originating
from the discretisation of a system of PDEs.  Substitution of~\eqref{eq:a_and_A}
into~\eqref{eq:galerkin_system_temp} yields
\begin{subequations}%
	\label{eq:blowup_original}
	\begin{align}
		u_1^\prime &= - v_1 + 2^{-1/2} u_1^2 + 2^{-1/2} \sum_{j = 2}^{k_0} u_j^2 + H^u_1, \\
		 v_1^\prime &= -2^{1/2} \varepsilon , \\
		 u_k^\prime &= \frac{1}{4 A^2} b_k \varepsilon^{-2p} u_k - v_k + 2^{1/2} u_1 u_k + 
			\sum_{i, j = 2}^{k_0} \eta^k_{i, j} u_i u_j + H^u_k, \\
		 v_k^\prime &= \frac{1}{4 A^2} b_k \varepsilon^{-2p + 1} v_k + \varepsilon H^v_k, \\
		 \varepsilon^\prime &= 0.
	\end{align}
\end{subequations}

\begin{remark}
	The $\varepsilon$-dependent rescaling of the domain for \eqref{eq:initial_problem}
	through~\eqref{eq:a_and_A} changes the fast-slow structure of the original system
	in~\eqref{eq:galerkin_system_temp}; in particular, the origin is now a fully degenerate steady
	state of \eqref{eq:blowup_original}. While singular objects such as steady states or manifolds
	for \eqref{eq:blowup_original} in blow-up space no longer correspond directly to singular
	objects from the layer and reduced problems for \eqref{eq:galerkin_system_temp}, the two systems
	are equivalent for non-zero $\varepsilon$. Hence, our findings will equally apply to
	\eqref{eq:galerkin_system_temp} in the original coordinates, i.e., after ``blow-down".
\end{remark}
A rescaling argument shows that the weights in~\eqref{eq:blowup_transform}, as well as the power $p$
in \eqref{eq:a_and_A}, must satisfy the following relations: 
\begin{subequations}%
	\begin{align}
		\beta_1 &= 2\alpha_1, \\
		\alpha_k &= \alpha_1 \quad\text{for }2 \le k \le k_0, \\
		\gamma - \beta_1 &= \alpha_1, \\
		\label{eq:blowup_coeff_a}
		-2p \gamma &\ge \alpha_1, \\
		\beta_j &= 2 \alpha_1 \quad\text{for }2 \le j \le k_0, \\
		\gamma - 2 p \gamma &\ge \alpha_1.
	\end{align}
\end{subequations}
We see from the first three equations above that the consecutive ratios $\alpha_k : \beta_k :
\gamma$  must be $1:2:3$, as in the finite-dimensional case, see e.g.~\cite{KruSzm3}. The smallest
integers and the resulting power $p$ that satisfy these relations are
\begin{equation}\label{scalling_1}
	\alpha_k = 1, \quad \beta_k = 2, \quad \gamma = 3, \quad\text{and}\quad p = - \frac{1}{6}.
\end{equation}
\begin{remark}
	The choice $p = - \frac{1}{6}$ is the unique one that leaves no factor of $r_i$ after
	desingularisation in the resulting equations for $u_{k, i}$ in chart $K_i$, with  $i=1,2,3$,
	where one also requires equality in~\eqref{eq:blowup_coeff_a}, making use of the relation $3
	\alpha_1 = \gamma$.  Furthermore, note that the weights in \eqref{scalling_1} are consistent
	with the scaling obtained from a ``desingularisation'' of the system of PDEs in
	\eqref{eq:initial_problem}; see Section~\ref{concl} for details. 
\end{remark}

For future reference, we also state the changes of coordinates between charts $K_1$, $K_2$, and $K_3$, as follows.
\begin{lemma}
	The change of coordinates $\kappa_{12}$ between charts $K_1$ and $K_2$ is given by
	\begin{align}%
		\label{eq:change_K1_K2}
		\kappa_{12} &:\ u_{1, 2} = \varepsilon_1^{-1/3} u_{1, 1},\; v_{1, 2} = \varepsilon_1^{-2/3},
			\;u_{k, 2} = \varepsilon_1^{-1/3} u_{k, 1}, \;v_{k, 2} = \varepsilon_1^{-2/3} v_{k, 1},
			\;\text{and}\; r_2 = \varepsilon_1^{1/3} r_1;
    \end{align}
	its inverse $\kappa_{21}=\kappa_{12}^{-1}$ reads
    \begin{align}%
    \label{eq:change_K2_K1}
		\kappa_{21} &:\ u_{1, 1} = v_{1, 2}^{-1/2} u_{1, 2},\; r_1 = v_{1, 2}^{1/2} r_2,
			\; u_{k, 1} = v_{1, 2}^{-1/2} u_{k, 2},\; v_{k, 1} = v_{1, 2}^{-1} v_{k, 2}, 
			\;\text{and}\; \varepsilon_1 = v_{1, 2}^{-3 / 2}.
	\end{align}
	Between charts $K_2$ and $K_3$, we have the following change of coordinates:
	\begin{equation}%
		\label{eq:change_K2_K3}
		\kappa_{23}:\ r_3 = u_{1, 2} r_2, \;v_{1, 3} = u_{1, 2}^{-2} v_{1, 2}, 
			\;u_{k, 3} = u_{1, 2}^{-1} u_{k, 2}, \;v_{k, 3} = u_{1, 2}^{-2} v_{k, 2},
			\;\text{and}\;\varepsilon_3 = u_{1, 2}^{-3}.
	\end{equation}
\end{lemma}
\begin{proof}
	Direct calculation.
\end{proof}
%
%
%
%

\subsection{Chart $K_1$}%
\label{subsec:K1}

The coordinate chart $K_1$ is formally defined by $\bar v_1=1$. Expressed in the coordinates of that
chart, the blow-up transformation in \eqref{eq:blowup_transform} reads
\[
	u_1 = r_1 u_{1, 1}, \quad v_1 = r_1^2, \quad 
	u_k = r_1 u_{k, 1}, \quad v_k = r_1^2 v_{k, 1}, \quad\text{and}\quad
	\varepsilon = r_1^3 \varepsilon_1.
\]
With the above transformation and after desingularisation of the resulting vector field by a factor
of $r_1$, the system in~\eqref{eq:blowup_original} becomes
\begin{subequations}%
	\label{eq:K1-system}
	\begin{align}
		u_{1, 1}' &= F_1 u_{1, 1} - 1 + 2^{-1/2} u_{1, 1}^2
			+ 2^{-1/2} \sum_{j = 2}^{k_0} u_{j, 1}^2 + H^u_{1, 1}, \\
		\label{eq:K1-system-r1}
		r_1' &= -F_1 r_1, \\
		\label{eq:K1-system-uk1}
		u_{k, 1}' &= F_1 u_{k, 1} + \frac{b_k}{4 A^2} \varepsilon_1^{1/3} u_{k, 1}
			- v_{k, 1} + 2^{1/2} u_{1, 1} u_{k, 1} + \sum_{i, j = 2}^{k_0} \eta^k_{i, j} u_{i, 1} u_{j, 1}
			+ H_{k, 1}^u, \\
		\label{eq:K1-system-vk1}
		v_{k, 1}' &= 2 F_1 v_{k, 1} + \frac{b_k}{4 A^2} r_1^3 \varepsilon_1^{4/3} v_{k, 1}
			+ \varepsilon_1 H_{k, 1}^v, \\
		\varepsilon_1' &= 3 F_1 \varepsilon_1, \label{eq:K1-system-eps1}
	\end{align}
\end{subequations}
where
\[
	F_1 = F_1(\varepsilon_1) = 2^{-1/2} \varepsilon_1,
\]
as well as
\begin{align*}
	H^u_{1, 1} &= \calO \left( r_1 \varepsilon_1, r_1^2, r_1^2 v_{j, 1}^2,
		r_1 u_{1, 1}, r_1 u_{j, 1} v_{j, 1}, r_1 u_{1, 1} u_{j, 1}^2, 
		r_1 u_{1, 1} u_{j, 1}^2, r_1 u_{j, 1} u_{i, 1} u_{l, 1} \right), \\
	H^u_{k, 1} &= \calO \left( r_1^2 v_{k, 1}, r_1^2 v_{i, 1} v_{j, 1}, r_1 u_{1, 1} v_{k, 1},
		\right. \\
		&\qquad\qquad \left. r_1 u_{k, 1}, r_1 u_{i, 1} v_{j, 1}, r_1 u_{1, 1}^2 u_{k, 1}, 
		r_1 u_{1, 1} u_{i, 1} u_{j, 1}, r_1 u_{j, 1} u_{i, 1} u_{l, 1} \right),\quad\text{and} \\
	H^v_{k, 1} &= \calO \left(
	r_1^4 v_{k,1}, r_1^4 v_{i, 1} v_{j, 1} \right)
\end{align*}
for $2 \le i, j, l \le k_0$ and $2\leq k\leq k_0$. Due to the presence of fractional powers of
$\varepsilon_1$ in Equations~\eqref{eq:K1-system-uk1} and \eqref{eq:K1-system-vk1} for $u_{k,1}$ and
$v_{k,1}$, respectively, the corresponding flow will not even be $C^1$ in $\varepsilon_1$.  Hence,
we rewrite \eqref{eq:K1-system} in terms of $\varepsilon_1^{1/3}$, which gives
\begin{subequations}%
	\label{eq:k1_mu}
	\begin{align}
		u_{1, 1}' &= F_1 u_{1, 1} - 1 + 2^{-1/2} u_{1, 1}^2 
			+ 2^{-1/2} \sum_{j = 2}^{k_0} u_{j, 1}^2 + H^u_{1, 1}, \\
		r_1' &= -F_1 r_1, \label{eq:k1_mu_r1} \\
		u_{k, 1}' &= F_1 u_{k, 1} + \frac{b_k}{4 A^2} \big( \varepsilon_1^{1/3} \big)  u_{k, 1}
			- v_{k, 1} + 2^{-1/2} u_{1, 1} u_{k, 1} + \sum_{i, j = 2}^{k_0} \eta^k_{i, j} u_{i, 1} u_{j, 1}
			+ H_{k, 1}^u, \\
		v_{k, 1}' &= 2 F_1 v_{k, 1} + \frac{b_k}{4 A^2} r_1^3 \big( \varepsilon_1^{1/3} \big)^4 v_{k, 1}
			+ \big( \varepsilon_1^{1/3} \big)^3 H_{k, 1}^v, \\
		{\big( \varepsilon_1^{1/3} \big)}^\prime &= F_1 \big(\varepsilon_1^{1/3}\big).
	\end{align}
\end{subequations}
Clearly, the flow of Equation~\eqref{eq:k1_mu} will be smooth with respect to $\varepsilon_1^{1/3}$;
in the following, we will hence refer to~\eqref{eq:k1_mu} when a higher degree of smoothness is
required.

Equation~\eqref{eq:k1_mu} admits the two principal steady states 
\begin{equation}
	p_a^{k_0} := (-2^{1/4}, 0, \mathbf{0}, \mathbf{0}, 0)\quad\text{and}\quad 
	p_r^{k_0} := (2^{1/4}, 0, \mathbf{0}, \mathbf{0}, 0),
\end{equation}
where $\mathbf{0}$ denotes the zero vector in $\bbR^{k_0 - 1}$.
\begin{lemma}%
	\label{lemma:K1:steady-states}
	The point $p_a^{k_0}$ is a partially hyperbolic steady state of Equation~\eqref{eq:k1_mu}, with
	the following eigenvalues and eigenvectors in the corresponding linearisation:
	\begin{itemize}
		\item the simple eigenvalue $-2^{3/4}$ with eigenvector $(1, 0, \dots, 0)$, corresponding to $u_{1, 1}$;
		\item the eigenvalue $-2^{3/4}$ with multiplicity $k_0 - 1$ and eigenvectors $(0,
			\dots, 1, \dots, 0)$, where non-zero entries appear at the $(k + 2)$-th position,
			corresponding to $u_{k, 1}$ ($2 \le k \le k_0$); and
		\item the eigenvalue $0$ with multiplicity $k_0 + 1$, corresponding to $r_1$, 
			$v_{k, 1}$ ($2 \le k \le k_0$), and $\varepsilon_1^{1/3}$.
	\end{itemize}
\end{lemma}
\begin{proof}
	Direct calculation.
\end{proof}
To describe the transition through chart $K_1$, i.e., to approximate the corresponding transition
map, we define the following sections for the flow of \eqref{eq:K1-system}:
\begin{equation}
	\begin{aligned}
		\SigmaInOne &:= \{ (u_{1, 1}, r_1, u_{k, 1}, v_{k, 1}, \varepsilon_1) 
			\; : \; r_1 = \rho \}\quad\text{and} \\
		\SigmaOutOne &:= \{ (u_{1, 1}, r_1, u_{k, 1}, v_{k, 1}, \varepsilon_1)
			\; : \; \varepsilon_1 = \delta\}, 
 	\end{aligned}
\end{equation}
for sufficiently small $\delta >0$. Next, we need to determine the transition time between
$\SigmaInOne$ and $\SigmaOutOne$, which will allow us to derive estimates for the corresponding
orbits as they pass through chart $K_1$.
\begin{lemma}
	The transition time between the sections $\SigmaInOne$ and $\SigmaOutOne$ under the flow of~\eqref{eq:K1-system} is given by
	\begin{equation}%
		\label{eq:k0-2:trans-time}
		T_1 = \frac{\sqrt{2}}{3} \left( \frac{1}{\varepsilon_1(0)}-\frac{1}{\delta} \right).
	\end{equation}
\end{lemma}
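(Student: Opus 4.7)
The key observation is that equation \eqref{eq:K1-system-eps1} for $\varepsilon_1$ is autonomous and decoupled from all the other components of the system in \eqref{eq:K1-system}. Substituting $F_1 = 2^{-1/2}\varepsilon_1$ into $\varepsilon_1' = 3F_1\varepsilon_1$ yields the scalar Bernoulli-type equation
\begin{equation*}
    \varepsilon_1' = \frac{3}{\sqrt{2}}\,\varepsilon_1^2,
\end{equation*}
which I can solve in closed form; the transition time then follows by matching the boundary conditions on the two sections.

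The plan is therefore: first, separate variables in the above ODE to write $d\varepsilon_1/\varepsilon_1^2 = (3/\sqrt{2})\,dt$, and integrate from $t=0$, where $\varepsilon_1(0)$ is the value assigned by entering the section $\SigmaInOne$ (at which $r_1 = \rho$), to $t = T_1$, where the trajectory first hits $\SigmaOutOne$ (so that $\varepsilon_1(T_1) = \delta$). This gives
\begin{equation*}
    \frac{1}{\varepsilon_1(0)} - \frac{1}{\delta} = \frac{3}{\sqrt{2}}\,T_1,
\end{equation*}
and rearranging yields the stated expression \eqref{eq:k0-2:trans-time}.

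Second, I should briefly justify that the map from $\SigmaInOne$ to $\SigmaOutOne$ is well defined on the relevant initial data: since $\varepsilon_1' > 0$ for $\varepsilon_1 > 0$, the function $\varepsilon_1(t)$ is strictly increasing and strictly convex, so as long as $\varepsilon_1(0) < \delta$ (which is automatic for the initial data under consideration, corresponding to small physical $\varepsilon = r_1^3 \varepsilon_1 = \rho^3 \varepsilon_1(0)$), the level $\varepsilon_1 = \delta$ is reached in finite positive time. Consistency with the conservation of $\varepsilon$, i.e.\ $r_1^3 \varepsilon_1 = \rho^3 \varepsilon_1(0)$, can be noted as a sanity check but is not required for the computation of $T_1$ itself.

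There is no real obstacle here: the only subtlety is recognising that equations \eqref{eq:K1-system-r1} and \eqref{eq:K1-system-eps1} form a closed subsystem, and that in fact \eqref{eq:K1-system-eps1} on its own determines $T_1$ without needing any information about $u_{1,1}$, the higher modes $u_{k,1},v_{k,1}$, or the blow-up radius $r_1$. The estimates on these remaining components during the transit time $T_1$ are not part of the present lemma and are left to the subsequent analysis in chart $K_1$.
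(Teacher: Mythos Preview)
Your proposal is correct and takes essentially the same approach as the paper: both solve the decoupled scalar equation $\varepsilon_1' = \tfrac{3}{\sqrt{2}}\varepsilon_1^2$ and impose the exit condition $\varepsilon_1(T_1)=\delta$, with the paper writing the explicit solution $\varepsilon_1(t) = \tfrac{2\varepsilon_1(0)}{2 - 3\sqrt{2}\varepsilon_1(0)t}$ first, while you integrate by separation of variables directly. Your additional remark on the monotonicity of $\varepsilon_1$ ensuring a well-defined finite hitting time is a welcome clarification that the paper only addresses implicitly.
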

\begin{proof}
	The explicit solution of Equation~\eqref{eq:K1-system-eps1} for $\varepsilon_1$ reads
	\begin{equation}%
		\label{eq:k0:eps1-explicit}
		\varepsilon_1(t) = \frac{2 \varepsilon_1(0)}{2 - 3 \sqrt{2} \varepsilon_1(0)t},
	\end{equation}
	where $\varepsilon_1(0)$ denotes an appropriately chosen initial value for $\varepsilon_1$ in
	$\SigmaInOne$.  Solving the equation $\varepsilon_1(T_1) = \delta$ for $T_1$ results
	in~\eqref{eq:k0-2:trans-time}, as stated. Note that the denominator
	in~\eqref{eq:k0:eps1-explicit} remains strictly positive for all $t\in [0, T_1]$.
\end{proof}
\begin{remark}
	We refer to the time variable by $t$ throughout for simplicity of notation, even though we
	consider different systems in the three coordinate charts $K_j$, with $j=1,2,3$, as well as
	multiple parametrisations of the same system in some cases. 
\end{remark}

To give a more complete description of the geometry and, in particular, of the steady state
structure, we proceed as follows. Setting $r_1 = 0 = \varepsilon_1$ in~\eqref{eq:K1-system}, we find
the singular system
\begin{subequations}%
	\label{eq:K1-r-epsilon-zero}%
	\begin{align}
		u_{1, 1}' &= -1 + 2^{-1/2} u_{1, 1}^2 + 2^{-1/2} \sum_{j = 2}^{k_0} u_{j, 1}^2, \\
		r_1' &= 0, \\
		u_{k, 1}' &= -v_{k, 1} + 2^{1/2} u_{1, 1} u_{k, 1} + \sum_{i, j = 2}^{k_0} u_{i, 1} u_{j, 1}, \\
		v_{k, 1}' &= 0, \\
		\varepsilon_1' &= 0,
	\end{align}
\end{subequations}
from which we see that the hyperplanes $\{ r_1 = 0 \}$ and $\{\varepsilon_1 = 0\}$ are invariant, as
is their intersection. An application of the implicit function theorem shows that lines of steady
states emanate from $p_{a}^{k_0}$ and $p_{r}^{k_0}$, respectively, for $u_{1, 1}$ close to $\pm
2^{1/4}$ and $u_{k, 1}$ and $v_{k, 1}$ small, with $2 \le k \le k_0$. Locally, around $p_a^{k_0}$,
these steady states will inherit the stability of $p_{a}^{k_0}$, which we will make use of in the
estimates in the following subsection. For $k_0 = 2$, the geometry is exemplified in
Figures~\ref{fig:chart-1-k0-2-steady-states} and~ \ref{fig:chart-1-k0-2-eigenvalues}, in which case
$p_a^{2}$ and $p_r^{2}$ are connected by curves of steady states which can be calculated explicitly
from~\eqref{eq:K1-r-epsilon-zero}; see \autoref{fig:chart-1-k0-2-steady-states}. The linearisation
around those states has one zero eigenvalue and two non-trivial eigenvalues $\ell_1$ and $\ell_2$
which depend on the $u_{1, 1}$-coordinate only; these eigenvalues are plotted
in~\autoref{fig:chart-1-k0-2-eigenvalues}.

The geometry for general $k_0$ will be similar, in that $p^{k_0}_a$ and $p^{k_0}_r$ will not be
isolated, with steady states lying in the plane $\{ r_1 = 0 = \varepsilon_1 \}$ that are neutral in
the $v_{k, 1}$-directions and of varying stability in $u_{1, 1}$ and $u_{k, 1}$, for $2 \le k \le
k_0$.  States that are close to the point $p^{k_0}_a$ will be stable in the latter directions, while
those close to $p^{k_0}_r$ will be unstable in the same directions; in between, there will be steady
states of saddle type. These statements are a direct consequence of the implicit function theorem,
applied to the vector field in~\eqref{eq:K1-r-epsilon-zero}.  It is unclear whether a curve of
steady states that connects $p^{k_0}_a$ and $p^{k_0}_r$  will exist for general $k_0$, as is the
case for $k_0=2$.

Furthermore, lines of equilibria are found emanating from each steady state in $\{ r_1 = 0 =
\varepsilon_1 \}$, as can again be seen from the implicit function theorem.  These lines locally
inherit the stability of the corresponding steady states they are based on. 

\begin{remark}
	Typically, steady states in the subspace equivalent to $\{ r_1 = 0 = \varepsilon_1\}$ after
	blow-up can be viewed as intersections of critical manifolds with the blow-up locus $\{\bar
	r=0\}$~\cite{KruSzm3}. However, that is not the case here, as the rescaling of the spatial
	domain by $\varepsilon$ in \eqref{eq:a_and_A} alters the fast-slow structure of the original
	Equation~\eqref{eq:galerkin_system_temp}. If the parameter $a$ is blown up as
	in~\cite{EngelKuehn1} instead, the correspondence with the flow pre-blow-up would be retained;
	however, the resulting dynamic boundary value problem poses different technical challenges, as
	detailed there.
\end{remark}

The existence of non-hyperbolic steady states near $p^{k_0}_a$ that are attracting in the directions
of $u_{1, 1}$ and $u_{k, 1}$ for $k=2,\dots,k_0$ implies the following result.
\begin{lemma}%
	\label{lemma:K1-center-manifold}
	For sufficiently small $\rho$, $\delta$, $\Cin_{u_{1, 1}}$, $\Cin_{u_{k, 1}}$, and $\Cin_{v_{k,
	1}}$, there exists an attracting, $(k_0 + 2)$-dimensional centre manifold $M_{k_0, 1}$ at
	$p^{k_0}_a$ in \eqref{eq:k1_mu}. The manifold $M_{k_0, 1}$ is given as a graph over $\big(u_{1,
	1}, r_1, v_{k, 1}, \varepsilon_1^{1/3}\big)$, where $2 \le k \le k_0$. In particular, for
	initial conditions close to $p^{k_0}_a$, solutions of \eqref{eq:k1_mu} satisfy $u_{1, 1}(t) < 0$
	for $t \in [0, T_1]$.
\end{lemma}
\begin{proof}
	The statements follow from centre manifold theory and~\autoref{lemma:K1:steady-states}.
\end{proof}

The centre manifold argument in \autoref{lemma:K1-center-manifold} implies that if $u_{1,1}$ is
close to $-2^{1/4}$ initially, then it will remain close throughout the transition through chart
$K_1$; in particular, $u_{1,1}$ will remain negative. To obtain corresponding estimates for the
remaining variables $u_{k, 1}$ and $v_{k, 1}$, with $k=2,\dots,k_0$, we combine the classical
variation of constants formula with a fixed point argument.


\begin{figure}
	\centering
	\begin{subfigure}[t]{0.5\textwidth}
		\centering
		\begin{overpic}[scale=0.5]{./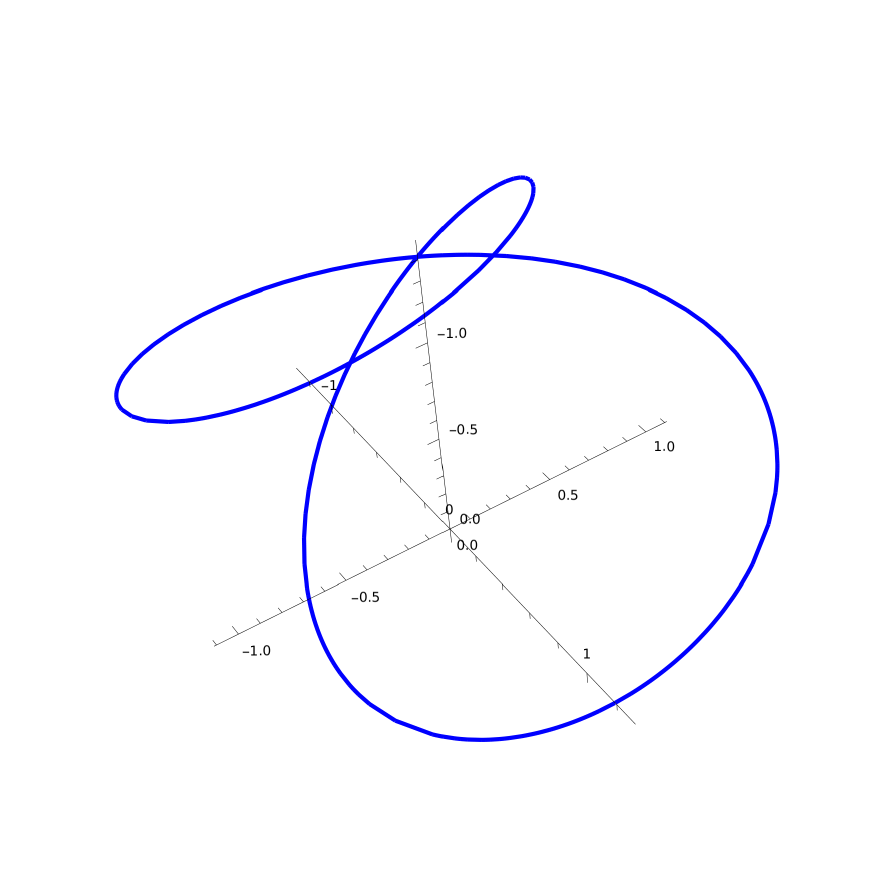}
			\put(70, 13){$u_{1, 1}$}
			\put(20, 20){$v_{2, 1}$}
			\put(43, 75){$u_{2, 1}$}
			\put(30, 58){$p_a^{k_0}$}
		\end{overpic}
		\caption{Steady states of~\eqref{eq:K1-r-epsilon-zero} when $k_0 = 2$.}%
		\label{fig:chart-1-k0-2-steady-states}
	\end{subfigure}%
	\hfill
	\begin{subfigure}[t]{0.5\textwidth}
		\centering
		\begin{overpic}[scale=0.5]{./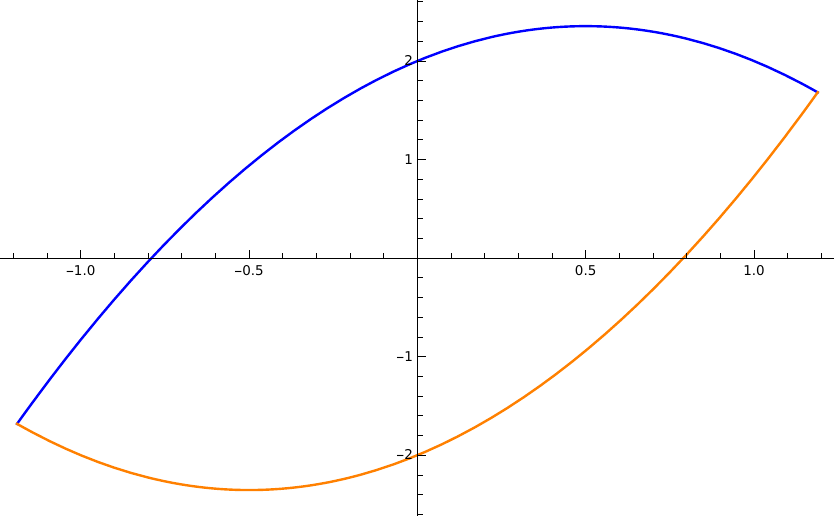}
			\put(100, 25){$u_{1, 1}$}		
			\put(20, 40){\textcolor{blue}{$\ell_1$}}
			\put(72, 15){\textcolor{orange}{$\ell_2$}}
		\end{overpic}
		\caption{The two non-trivial eigenvalues $\ell_1$ and $\ell_2$.}%
		\label{fig:chart-1-k0-2-eigenvalues}
	\end{subfigure}
	\caption{Steady state structure of~\eqref{eq:K1-r-epsilon-zero} for $k_0=2$. (a) The principal
	steady states $p^{2}_a$ and $p^{2}_r$ are connected by a pair of symmetric curves of steady
	states that are parametrised by $u_{1, 1}$. (b) The two non-trivial eigenvalues $\ell_1$ and
	$\ell_2$ of the linearisation about these steady states are plotted against $u_{1, 1}$.}
\end{figure}


\begin{figure}
	\centering
	\begin{overpic}[scale=0.50, percent]{./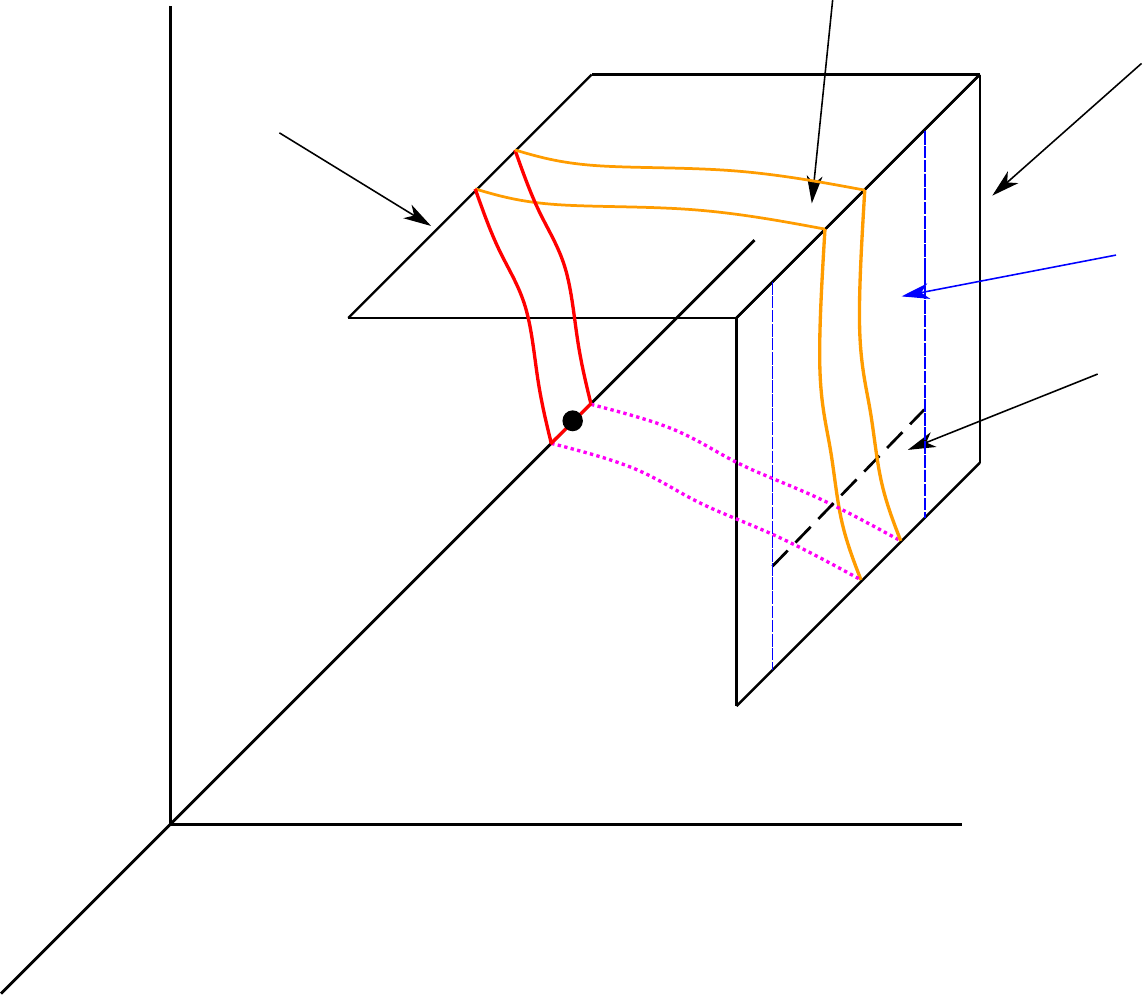}
		\put(15, 90){$\varepsilon_1$}
		\put(5, 0){$u_{1,1}, u_{2, 1}, \dots, u_{k_0, 1}$}
		\put(90, 13){$r_1, v_{2,1}, \dots, v_{k_0, 1}$}
		\put(20, 78){$\SigmaOutOne$}
		\put(70, 89){$M_{k_0, 1}$}
		\put(100, 84){$\SigmaInOne$}
		\put(100, 64){\textcolor{blue}{$R_1$}}
		\put(98, 55){$I(\varepsilon_1)$}
		\put(48, 53){\small$p^{k_0}_a$}
	\end{overpic}
	\caption{The dynamics in $K_1$ is organised around the attracting centre manifold $M_{k_0, 1}$,
	which is anchored in a curve of steady states in the subspace $\{r_1=0=\varepsilon_1\}$, one of which is
	$p^{k_0}_a$. The transition map $\Pi_1$ is defined on the subset $R_1 \subset \SigmaInOne$
	around the intersection $\SigmaInOne \cap M_{k_0, 1}$; slices of $R_1$ with $\varepsilon_1$
	constant, denoted by $I(\varepsilon_1)$, will be mapped to slices with $\varepsilon_3$ constant
	in chart $K_3$. Since $M_{k_0, 1}$ is a graph over $\big(u_{1, 1}, r_1, v_{k, 1},
	\varepsilon_1^{1/3}\big)$, see~\autoref{lemma:K1-center-manifold}, it is illustrated as having
	``thickness''.}
	\label{fig:chart1-1}
\end{figure}

\begin{lemma}%
\label{lemma:K1-estimates}
For $2 \le k \le k_0$ and $u_k(0)$ and $v_{k}(0)$ satisfying \eqref{eq:initial-data}, solutions of
	Equation~\eqref{eq:K1-system} satisfy the estimates
	\begin{equation}
		\label{eq:k0-2:uk1-estimates}%
		\begin{aligned}
			\left\vert u_{k, 1}(t) \right\vert &\le  \frac 1 {\rho}|u_{k}(0)| 
				+ \frac{8a^2 \rho}{|b_k|}  \Big[\sigma_u + \sigma_v \varepsilon_1(0)^{2/3} \rho^2\delta^{2/3}\Big(1 + \frac{8a^2}{|b_k|}\Big)\Big]
		 \end{aligned}
	\end{equation}
	and
	\begin{equation}%
		\label{eq:k0-2:vk1-estimates}
		\left\vert v_{k, 1}(t) \right\vert \le
		\frac{\delta^{2/3}}{\varepsilon_1(0)^{2/3} \rho^2} \big\vert v_{k}(0) \big\vert +
		\varepsilon_1(0)^{2/3} \delta^{2/3} \frac{8a^2 \rho^2}{|b_k|} \sigma_v \leq
		\varepsilon_1(0)^{2/3} \rho^2 \delta^{2/3}  \Big(C_{k, v_0} + \frac{8a^2}{|b_k|}\sigma_v\Big)
		\end{equation} for  all $t \in [0,T_1]$ and some $\kappa \leq \sigma_u, \sigma_v <1$, where
		$T_1$ is the transition time determined in~\eqref{eq:k0-2:trans-time} and $\kappa>0$ is as
		in~\eqref{eq:initial-data-L2}. 
\end{lemma}
\begin{proof}
We first derive the estimates for $v_{k,1}$. Application of the variation of constants formula
to~\eqref{eq:K1-system-vk1} yields
\begin{align}%
	\label{estim_vk_11}
	v_{k, 1}(t) &=  \exp \Big( \int_0^t V_{k,1}(s) \,ds \Big) v_{k, 1}(0) 
		 + \int_0^t \exp\Big(\int_s^t V_{k,1}(\tau) d\tau\Big) \varepsilon_1(s) H_{k,1}^v ds,
\end{align}
where $V_{k,1}(s) = 2^{1/2} \varepsilon_1(s) 
		+ \frac{b_k}{4 A^2} r_1^3(s) \varepsilon_1^{4/3}(s)$. 
Equation~\eqref{eq:K1-system-r1} can be solved explicitly for $r_1$ to give 
\begin{equation}%
	\label{eq:k0:r1-explicit}
	r_1(t) = 2^{-1/3} \rho {\big( 2 - 3 \sqrt{2} \varepsilon_1(0) t \big)}^{1/3},
\end{equation}
where $\varepsilon_1(0)$ denotes the initial value for $\eps_1$ and $r_1(0)=\rho$.  Note that, due
to $b_k < 0$, the second term in $V_{k,1}(s)$ is negative for all $s \in [0, T_1]$. Combination of
the above expression with the explicit solutions for $\varepsilon_1(t)$ and $r_1(t)$ in
\eqref{eq:k0:eps1-explicit} and \eqref{eq:k0:r1-explicit}, respectively, then implies
\begin{align}%
	\label{estim_vk_12} 
	|v_{k, 1}(t)| &\le \frac 2 3 \exp ( \int_0^t \frac{\varepsilon_1(0)} {\phi_1(s)}\,ds ) |v_{k,
	1}(0)| \\ \notag
	&+ \Big(\frac{\sqrt{2}} 3\Big)^{\frac 1 3} \rho^2 \frac{ e^{-\alpha \phi_1(t)^{2/ 3}}}{
		\phi_1(t)^{2/ 3}} \int_0^t \varepsilon_1(0) \phi_1(s)^{1/ 3} e^{\alpha  \phi_1(s)^{2/3}}
		|\widetilde H_{k,1}^v(s)| ds,
\end{align}
where 
$\phi_1(s) = \sqrt{2}/3-\varepsilon_1(0) s$, $\alpha =(3/\sqrt{2})^{2/3} b_k\rho^2/(4 a^2
\sqrt{2})$, and $H_{k,1}^v(t) = r_1(t)^2 \widetilde H_{k,1}^v(t)$. Evaluating the integrals in
\eqref{estim_vk_12}, we obtain
\begin{equation}%
	\label{eq:k0-2:v21-estimates}
	\left\vert v_{k, 1}(t) \right\vert 
		\le \frac{\delta^{2/3}}{\varepsilon_1(0)^{2/3}}\Big( \vert v_{k, 1}(0) \vert  +
		\frac{ 8 a^2}{|b_k|} \sup\limits_{[0,T_1]} |\widetilde H^v_{k,1}(t)|\Big)\qquad
		\text{ for all }  t\in [0, T_1].
\end{equation}
Recall that the term $\widetilde H^v_{k,1}$ is at least quadratic in $v_{k,1}$, with $k=2, \ldots, k_0$.
To estimate $u_{k,1}$, we first rewrite~\eqref{eq:K1-system-uk1} in the form
\[	
	u_{k, 1}^\prime = \Big(F_1 + \frac{b_k}{4 A^2} \varepsilon_1^{1/3} + 2^{1/2} u_{1, 1}\Big) u_{k, 1}
		- v_{k, 1} + M_k(u_{2, 1}, \dots, u_{k_0, 1}) + H_{k,1}^u
\]
for $2 \le k \le k_0$, where
\[
	M_k(u_{2, 1}, \dots, u_{k_0, 1}) := \sum_{i, j = 2}^{k_0} \eta^k_{i, j} u_{i, 1} u_{j, 1}.
\]
Application of the variation of constants formula yields 
\begin{multline}%
	\label{eq:K1-uk1-estim}
	u_{k, 1}(t) = \exp \left( \int_0^t  U_{k,1}(s) ds  \right) u_{k, 1}(0) 
		- \int_0^t \exp \left( \int_s^t U_{k,1}(\tau)d\tau \right) v_{k, 1}(s) ds \\
	+ \int_0^t \exp \left( \int_s^t U_{k,1}(\tau)d\tau \right) \Big(M(u_{2, 1}(s), \dots, u_{k_0, 1}(s))+ H_{k,1}^u\Big) ds,
\end{multline}
where $U_{k,1}(s) := 2^{-1/2} \varepsilon_1(s) + \frac{b_k}{4 A^2} \varepsilon_1^{1/3}(s) + 2^{1/2} u_{1, 1}(s)$.

Due to~\autoref{lemma:K1-center-manifold}, we have $u_{1, 1}(s) < 0$ for $s \in [0, T_1]$; hence, in
the following estimates, we replace $U_{k,1}(s)$ with $\widetilde U_{k,1}(s) := 2^{-1/2}
\varepsilon_1(s) + \frac{b_k}{4 A^2} \varepsilon_1^{1/3}(s)$, as $\exp(\int_s^t U_{k,1}(\tau) d\tau)
\leq \exp(\int_s^t \widetilde U_{k,1}(\tau) d\tau)$ for all $0\leq s < t\leq T_1$. Direct integration gives
\begin{equation} 
	\begin{aligned}
		0&\leq \mathcal I_1(t) = \exp \left( \int_0^t \widetilde U_{k,1}(s) ds  \right) \\ 
			&= \frac 1{\big[1 - (3/\sqrt{2}) \varepsilon_1(0) t\big]^{1/3}} \exp( \frac{ b_k}{4A^2 \sqrt{2}} 
				\Big[ \Big( \frac 1 { \varepsilon_1(0)}\Big)^{\frac 2 3} 
			-  \Big( \frac 1 { \varepsilon_1(0)} - \frac 3 { \sqrt{2}} t\Big)^{\frac 2 3} \Big])
			\leq 1,
	\end{aligned}
\end{equation} 
since $\mathcal I_1(t)$ is a non-increasing function for $\delta \leq \pi^3 2^{-5/4}/(\rho^{3/2} \varepsilon_1(0)^{1/2}) =\pi^3 2^{-5/4}/\varepsilon^{1/2}$,
and 
\[
	\mathcal I_1(T_1) = \frac{ \delta^{1/3}}{\varepsilon_1(0)^{1/3}} \exp(\frac{ b_k}{4A^2\sqrt{2}}
	\Big[ \Big( \frac 1 { \varepsilon_1(0)}\Big)^{\frac 2 3} -  \Big( \frac 1 {\delta}\Big)^{\frac 2 3}
	\Big]) \leq \exp(\frac{ b_k}{8 A^2\sqrt{2} } \Big( 1- \frac 1 {\alpha }\Big) \Big( \frac 1 {
	\varepsilon_1(0)}\Big)^{\frac 2 3} ) 
\]
for $\delta \geq \alpha \varepsilon_1(0)>0$ with $\alpha\geq 1$.  Next, we have that 
\begin{equation*}
\begin{aligned}
	0 &\leq \mathcal I_2(t) = \int_0^t \exp \Big( \int_s^t \widetilde U_{k,1}(\tau) d\tau \Big) ds = \varepsilon_1(t)^{\frac{1}{3}} \frac{ 4 A^2 }{|b_k|}
		\bigg[ \Big( \frac{1}{\varepsilon_1(0)} - \frac{3}{\sqrt{2}} t \Big)^{2/3}  
		\\
  & \quad  - \left( \frac 1{ \varepsilon_1(0)}\right)^{\frac 2 3}
		\exp \bigg( \frac{1}{\sqrt{2}} \frac{ b_k}{4A^2}
		\bigg[\Big( \frac 1{ \varepsilon_1(0)} \Big)^{\frac 2 3} 
		- \Big( \frac{1}{\varepsilon_1(0)} - \frac{3}{\sqrt{2}} t \Big)^{\frac 23} \bigg] \bigg) \bigg] \\
	& \quad+ \varepsilon_1(t)^{\frac 1 3}\frac{ 16 A^4 \sqrt{2}}{|b_k|^2}
	\bigg[ 1- \exp\bigg(\frac{1}{\sqrt{2}} \frac{ b_k}{4A^2}\bigg[ \Big( \frac 1{ \varepsilon_1(0)}
		\Big)^{\frac{2}{3}} - \Big( \frac 1{ \varepsilon_1(0)} - \frac{3} {\sqrt{ 2}} t
		\Big)^{\frac{2}{3}} \bigg]\bigg) \bigg] \\
	&\leq \frac {8\rho a^2}{|b_k|},
	\end{aligned}
\end{equation*}
where we have used $A^2 = \varepsilon^{1/3} a^2 = \rho \varepsilon_1(0)^{1/3}a^2$ and 
\[
\begin{aligned} 
\calI_2(T_1) & = \delta^{-1/3} \frac{ 4 A^2 }{|b_k|} \bigg[ 1 - 
\Big( \frac \delta{ \varepsilon_1(0)}\Big)^{2/3}
\exp(\frac 1 { \sqrt{2}} \frac{ b_k}{4A^2} \Big[\Big( \frac 1{ \varepsilon_1(0)}\Big)^{2/3} - \frac 1{ \delta^{2/3}} \Big]) \bigg]
\\
& + \delta^{1/3}
\frac{ 16 A^4 \sqrt{2}}{|b_k|^2}
\bigg[1- \exp(\frac 1 { \sqrt{2}} \frac{ b_k}{4A^2}\Big[ \Big( \frac 1{ \varepsilon_1(0)} \Big)^{\frac 2 3} -  \frac 1 { \delta^{2/3}}\Big]) \bigg]
\leq \frac 1{ \delta^{1/3} } \frac{ 4 A^2 }{|b_k|}\Big[1 + \frac{ 4 A^2 \sqrt{2}}{|b_k|} \Big].
\end{aligned}
\]
Here, we again have $0<\alpha\varepsilon_1(0) \leq \delta <1$, with $\alpha \geq 1$.

To complete the estimates, we shall use a fixed point argument and define the set
\[
	\begin{multlined}
		\calB_1 = \bigg\{(\tilde u_{2, 1}, \dots, \tilde u_{k_0, 1} , \tilde v_{2, 1}, \dots, \tilde v_{k_0, 1} )
			: \tilde u_{k, 1}, \tilde v_{k,1} \in  C[0, T], 2 \le k \le k_0, \\	
		 \text{with }\sup_{[0,T_1]}|\tilde u_{k, 1}(t)| \le  C_{k,u}, 
		 	\sup_{[0,T_1]}|\tilde v_{k, 1}(t)| \le C_{k,v}, \\ 
		 \sum_{k=2}^{k_0} C_{k,u}^2 \leq \tilde \sigma_u,\text{ and } 
		 	\sum_{k=2}^{k_0} C_{k,v}^2 \leq \tilde \sigma_v \varepsilon_1(0)^{4/3} \bigg\},
	\end{multlined}
\]
where $\tilde \sigma_u, \tilde \sigma_v \leq 1$.

Considering $M_k(\tilde u_{2, 1}, \dots, \tilde u_{k_0, 1})$ and 
\[
	H_{k,1}^l = H_{k,1}^l(\tilde u_{2,1},\ldots,\tilde u_{k_0,1}, \tilde v_{2,1}, \ldots, \tilde v_{k_0,1}),
\] 
with $l=u,v$, in \eqref{estim_vk_11} and \eqref{eq:K1-uk1-estim}
for $(\tilde u_{2, 1}, \dots, \tilde u_{k_0, 1} , \tilde v_{k,1}, \ldots, \tilde v_{k_0, 1})\in
\mathcal B_1 $, we obtain  a map $\mathcal N_1$ given by $\mathcal N_1(\tilde u_{2,1}, \ldots,
\tilde u_{k_0,1}, \tilde v_{2,1}, \ldots, \tilde v_{k_0,1}) = ( u_{2, 1}, \dots, u_{k_0, 1}, v_{2,
1}, \dots, v_{k_0, 1})$. Solutions of \eqref{estim_vk_11} and \eqref{eq:K1-uk1-estim}
correspond to the fixed points of $\mathcal N_1$.  

We shall show that $\mathcal N_1 : \mathcal B_1 \to \mathcal B_1$. Our assumptions on the initial conditions, together with \eqref{eq:k0-2:v21-estimates}, yield 
\begin{equation}%
	\label{estim_vk_22}
	\left\vert v_{k, 1}(t) \right\vert 
		\le \delta^{2/3} \varepsilon_1(0)^{2/3}\Big( \rho^2 C_{k, v_0} +
		\frac{ 8 a^2 \rho^2}{|b_k|} \sigma_v \Big)\qquad
		\text{ for all }  t\in [0, T_1], 
\end{equation}
where we have used $ | \widetilde H^v_{k,1}(t)| \leq C_1\rho^2 \sum_{k=2}^{k_0} |\tilde
v_{k,1}|^2 \leq \rho^2 C_2 \varepsilon_1(0)^{4/3} \tilde \sigma_v \leq \rho^2 \varepsilon_1(0)^{4/3}
\sigma_v$.
Then, 
\[
    |u_{k,1}(t)| \leq |u_{k,1}(0)| + 
    \frac{ 8 a^2\rho}{|b_k|}\big( C_3 \varepsilon_1(0)^{2/3} + \sigma_u\big) \qquad
		\text{ for all }  t\in [0, T_1], 
\]
where $|M_k + H_{k,1}^u|\leq C_4  \sum_{k=2}^{k_0} |\tilde u_k|^2 \leq C_5 \tilde \sigma_u =  \sigma_u$.

Thus, for $0<\rho<1$ and $0<\sigma_u,\sigma_v<1$, we obtain that $\mathcal N_1 : \mathcal B_1 \to
\mathcal B_1$, which implies the estimates in \eqref{eq:k0-2:uk1-estimates} and
\eqref{eq:k0-2:vk1-estimates}. 
%
\end{proof}
\begin{remark}
	Note that if $H^v = 0$, then it is sufficient to consider $|v_k(0)| \leq C_{k, v_0}
	\varepsilon^{1/2}$ and $|u_k(0)| \leq C_{k, u_0}$. For more general higher-order terms of the form
	\[ 
		H^v_k= \calO (u_i u_j, v_i v_j, v_1v_k, v_i v_j),
	\]	
	with $i,j=2, \ldots, k_0$, we would have to assume that $|u_{k}(0)| \le  C_{k,u_0}
	\varepsilon^{2/3}$. Then, in the definition of $\mathcal B_1$, we would consider
	$\sum_{k=2}^{k_0}  C_{k,u}^2\leq  \varepsilon^{4/3} \tilde \sigma_u$, which would imply
	\[
	|u_{k,1}(t)| \leq \varepsilon_1(0)^{2/3} \rho^{2} C_{k, u_0} + 
	   \varepsilon_1(0)^{2/3} \frac{ 8 a^2\rho}{|b_k|}\big(\sigma_u + \sigma_v\big).
	\]
\end{remark}

Given the above estimates, the transition map $\Pi_1$ in chart $K_1$ will be defined on the set $R_1
\subset \SigmaInOne$, which is given by 
\begin{multline}%
	\label{eq:def_R}
	R_1 := \Big\{ (u_{1, 1}, r_1, u_{k, 1}, v_{k, 1}, \varepsilon_1) 
		:|u_{1, 1} + 2^{1/4}| \le \Cin_{u_{1, 1}}, r_1 = \rho,  \\
		 |u_{k, 1}| \le \Cin_{u_{k, 1}} , 
		 |v_{k, 1}| \le \Cin_{v_{k, 1}} \varepsilon_1^{4/3} \text{ for } k = 2, \ldots, k_0, \text{ and }\varepsilon_1 \in [0, \delta] \Big\};
\end{multline}
see \autoref{fig:chart1-1}. The set $R_1$ is precisely the set $\Rin \subset \DeltaIn$, transformed
into the coordinates of chart $K_1$. For $\varepsilon_1 \in [0, \delta]$ fixed, we also define the
slices $I(\varepsilon_1)\subset R_1$ as
\begin{equation}%
	\label{eq:def_Imu}
	I(\varepsilon_1) := \left\{ (u_{1, 1}, r_1, u_{k, 1}, v_{k, 1}, \varepsilon_1) \in R_1 \; : 
		\; \varepsilon_1\in \left[ 0, \delta \right]\text{ fixed} \right\}.
\end{equation}
These slices will be useful when combining the transition through chart $K_1$ with those through
charts $K_2$ and $K_3$, as $I(\varepsilon_1)$ will be mapped to sets with $\varepsilon_3$ constant
in an appropriately defined section $\SigmaOutThree$. 

We summarise our findings on the transition through chart $K_1$, and on the corresponding map $\Pi_1$.
\begin{prop}%
	\label{prop:K1:transition-map}
	The transition map $\Pi_1 : R_1 \rightarrow \SigmaOutOne$ is well-defined. For 
	\[ 
		(u_{1, 1}, \rho, u_{k, 1}, v_{k, 1}, \varepsilon_1)\in R_1, 
			\quad\text{with } k = 2, \dots, k_0, 
	\]
	denote
	\begin{equation}
		\Pi_1(u_{1, 1}, \rho, u_{k, 1}, v_{k, 1}, \varepsilon_1) 
			= (\uoneoutone, \rout_1, \ukoutone, \vkoutone, \delta).
	\end{equation}
	Then, the following estimates hold:
	\begin{subequations}
		\begin{align}
			|\uoneoutone + 2^{1/4} | & \le \Cout_{u_{1, 1}}, \label{eq:prop-K1-u1} \\
			\rout_1 & \in [0, \rho], \label{eq:prop-K1-r} \\
			|\ukoutone| & \le \Cout_{u_{k, 1}},\quad\text{and} \label{eq:prop-K1-uk} \\
			|\vkoutone| & \le \Cout_{v_{k, 1}} \delta^{2/3}, \label{eq:prop-K1-vk}
		\end{align}
	\end{subequations}
	where $\Cout_{u_{k, 1}}$, $\Cout_{u_{k, 1}}$, and $\Cout_{v_{k, 1}}$ are appropriately chosen
	constants. Furthermore, the restriction $\Pi_1 |_{I(\varepsilon_1)}$ is a contraction, with rate bounded by 
	$C \exp \left( c T_1 \right)$, where $C > 0$ and $-2^{3/4} < c < 0$. 
\end{prop}
\begin{proof} 
	The estimates in \eqref{eq:prop-K1-uk} and \eqref{eq:prop-K1-vk} follow directly from the
	definition of $R_1$ in~\eqref{eq:def_R} and~\autoref{lemma:K1-estimates},
	while~\eqref{eq:prop-K1-r} is immediate from the observation that $r_1(t)$ is decreasing,
	by~\eqref{eq:k1_mu_r1}. Finally,~\eqref{eq:prop-K1-u1} and the stated contraction property are
	due to~\autoref{lemma:K1-center-manifold} and the existence of the attracting centre manifold
	$M_{k_0, 1}$.
\end{proof}
%
%
%
%
%

\subsection{Chart $K_2$}

As will become apparent, the dynamics of \eqref{eq:blowup_original} in chart $K_2$ can be seen as a
regular perturbation of the planar subsystem for the first two modes $\{u_1,v_1\}$,
after transformation to $K_2$. In particular, for $r_2 = 0$, that subsystem reduces to the
well-studied Riccati equation \cite{MisRoz}. As the requisite analysis is similar to that in the
corresponding rescaling chart for the singularly perturbed planar fold \cite{KruSzm3}, we merely
outline it here.

In chart $K_2$, the blow-up transformation in \eqref{eq:blowup_transform} reads
\[
	u_1 = r_2 u_{1, 2}, \quad v_1 = r_2^2 v_{1, 2}, \quad
	u_k = r_2 u_{k, 2}, \quad v_k = r_2^2 v_{k, 2},
	\quad\text{and}\quad \varepsilon = r_2^3;
\]
in particular, the variables $u_k$ and $v_k$ ($1\leq k\leq k_0$) are rescaled with powers of $r_2=\varepsilon^{1/3}$, which justifies the terminology.

Substitution of the above transformation into \eqref{eq:blowup_original}  and 
desingularisation with a factor of $r_2$ gives
\begin{subequations}%
	\label{eq:blowup_k2}
	\begin{align}
		u_{1, 2}' &= - v_{1, 2} + 2^{-1/2} u_{1, 2}^2 
			+ 2^{-1/2} \sum_{j = 2}^{k_0} u_{j, 2}^2 + H^u_{1, 2}, \\
		v_{1, 2}' &= - 2^{1/2}, \\
		u_{k, 2}' &= \frac{b_k}{4 A^2} u_{k, 2} - v_{k, 2} + 2^{1/2} u_{1, 2} u_{k, 2}
			+ \sum_{i, j = 2}^{k_0} \eta^k_{i, j} u_{j, 2} u_{i, 2} + H^u_{k, 2}, \label{eq:blowup_k2:uk2} \\
		v_{k, 2}' &= \frac{b_k}{4 A^2} r_2^3 v_{k, 2} + H^v_{k, 2}, \label{eq:blowup_k2:vk2}\\
		r_2' &= 0
	\end{align}
\end{subequations}
for $2\leq k \leq k_0$, where 
\begin{align*}
	H^u_{1, 2} &= \calO \left( r_2 \right), \\
	H^u_{k, 2} &= \calO \left( r_2 u_{1, 2} v_{k, 2}, r_2 u_{k, 2} v_{1, 2},
		r_2 u_{i, 2} u_{j, 2}, r_2 u_{1, 2}^2 u_{k, 1}, r_2 u_{1, 2} u_{i, 2} u_{j, 2},
		r_2 u_{i, 2} u_{j, 2} u_{l, 2} \right),\quad\text{and} \\
	H^v_{k, 2} &= \calO \left(
	r_2^4 v_{1, 2} v_{k, 2}, r_2^4 v_{i, 2} v_{j, 2} \right),
\end{align*}
with $2 \le i, j, l \le k_0$.


The plane $\{ u_{k, 2} = 0 = v_{k, 2} : 2 \le k \le k_0 \} \cap \{r_2=0\}$ is
invariant under the flow of Equation~\eqref{eq:blowup_k2}; on that plane, \eqref{eq:blowup_k2} reduces to
\begin{subequations}%
	\label{Riccati_1}
	\begin{align}
		u_{1, 2}' &= - v_{1,2} + 2^{-1/2} u^2_{1, 2}, \\
		v_{1, 2}' &= - 2^{1/2},
	\end{align}
\end{subequations}
with $u_{1, 2}, v_{1, 2} \in \bbR$, which is a Riccati equation that corresponds to the one found
in~\cite[Proposition 2.3]{KruSzm3}, up to a rescaling. Correspondingly, we have the following result.
\begin{prop}%
	\label{prop:riccati}
	The Riccati equation in~\eqref{Riccati_1} has the following properties:
	\begin{enumerate}
		\item Every orbit has a horizontal asymptote $v_{1, 2} = v^{\infty}_{1, 2}$, where 
			$v^{\infty}_{1, 2}$ depends on the orbit, such that $u_{1, 2} \rightarrow\infty$ as
			$v_{1, 2}$ approaches $v^{\infty}_{1, 2}$ from above.
		\item There exists a unique orbit $\gamma_2$ which can be parametrised as $(u_{1, 2},
			s(u_{1, 2}))$, with $u_{1, 2} \in \bbR$, which is asymptotic to the left branch of the parabola 
			$\{-v_{1, 2} + 2^{-1/2} u_{1, 2}^2 = 0\}$ for $u_{1, 2} \rightarrow -\infty$. The orbit
			$\gamma_2$ has a horizontal asymptote $v_{1, 2} = - \Omega_0 < 0$ such that 
			$u_{1, 2} \rightarrow \infty$ as $v_{1, 2}$ approaches $-\Omega_0$ from above, where $\Omega_0$ is a positive constant that is defined as in \cite{KruSzm3}.
		\item The function $s(u_{1, 2})$ has the asymptotic expansions
			\begin{equation}%
				\label{eq:gamma2_minus_inf}
				s(u_{1, 2}) = 2^{-1/2} u_{1, 2}^2 + \frac{2^{-1/2}}{u_{1, 2}} + \calO \left( \frac{1}{u_{1, 2}^4} \right)
				\quad \text{as } u_{1, 2} \rightarrow - \infty
			\end{equation}
			and 
			\begin{equation}%
				\label{eq:gamma2_plus_inf}
				s(u_{1, 2}) = - \Omega_0 + \frac{2^{1/2}}{u_{1, 2}} + \calO \left( \frac{1}{u_{1, 2}^3} \right)
				\quad \text{as } u_{1, 2} \rightarrow\infty.
			\end{equation}
		\item All orbits to the right of $\gamma_2$ are backward asymptotic to the right branch of
			the parabola $\{-v_{1, 2} + 2^{-1/2} u_{1, 2}^2 = 0\}$.
		\item All orbits to the left of $\gamma_2$ have a horizontal asymptote $v_{1, 2} = v_{1,
			2}^{-\infty} > v_{1, 2}^{\infty}$, where $v_{1, 2}^{-\infty}$ depends on the orbit,
			such that $u_{1, 2} \rightarrow -\infty$ as $v_{1, 2}$ approaches $v_{1, 2}^{-\infty}$
			from above.
	\end{enumerate}
\end{prop}

If we transform the orbit $\gamma_2$ to chart $K_1$, we find that 
\begin{equation}%
	\label{eq:gamma-1-def}
	\gamma_1:=\kappa_{12}^{-1}(\gamma_2) = \left\{\left( u_{1, 2} {s(u_{1, 2})}^{-1/2}, 0, 
		\bzero, \bzero, {s(u_{1, 2})}^{-3/2} 
		\right) \right\},
\end{equation}
where $\bzero$ denotes the zero vector in $\mathbb{R}^{k_0-1}$.

In fact, expanding~\eqref{eq:gamma-1-def} in a power series as $u_{1, 2} \rightarrow - \infty$, we obtain
\begin{equation}
	\gamma_1
		= \left\{\left( -2^{1/4} + \frac{2^{-3/4}}{u_{1, 2}^3} + \calO \left(\frac{1}{u_{1, 2}^6} \right), 
			0, \bzero, \bzero, - \frac{2^{-3/4}}{u_{1, 2}^3} + \calO \left( \frac{1}{u_{1, 2}^6} \right)
			\right) \right\},
\end{equation}
which implies that $\gamma_1$ approaches the steady state $p^{k_0}_a$ in chart $K_1$,
tangent to the vector $(-1, 0, \bzero, \bzero, 1)$. 

Similarly, for $u_{1, 2} > 0$, we can transform $\gamma_2$ to the coordinates in chart $K_3$ via
\begin{equation}
	\begin{aligned}
		\gamma_3 &:=\kappa_{23}(\gamma_2) 
			= \left\{ \left( 0, u_{1, 2}^{-2} s(u_{1, 2}), 0, 0, u_{1, 2}^{-3} \right) \right\} \\  
			&= \left\{ \left(0, - \frac{\Omega_0}{u_{1, 2}^2} + \frac{2^{1/2}}{u_{1, 2}^3} 
				+ \calO \left( \frac{1}{u_{1, 2}^5} \right) , \bzero , \bzero,
				\frac{1}{u_{1, 2}^{3}}\right) \right\} \\
			&= \left\{ \left( 0, -\Omega_0 \varepsilon_3^{2/3} + 2^{1/2} \varepsilon_3 + \calO
			\big(\varepsilon_3^{5/3}\big) \right), \bzero, \bzero, \varepsilon_3 \right\},
	\end{aligned}
\end{equation}
which shows that, as $u_{1, 2} \rightarrow \infty$  or, equivalently, as $\varepsilon_3 \rightarrow
0$, $\gamma_3$ approaches the origin in chart $K_3$ tangent to the vector $(0, 1, \bzero, \bzero, 0)$.

To determine the transition map for chart $K_2$, we first transform the exit section $\SigmaOutOne$
from chart $K_1$ to the coordinates of $K_2$, applying the change of coordinates $\kappa_{12}$ in~\eqref{eq:change_K1_K2}, which will yield an entry section $\SigmaInTwo$
for the flow in $K_2$:
\[
	\SigmaInTwo := \big\{ (u_{1, 2}, v_{1, 2}, u_{k, 2}, v_{k, 2}, r_2)
	: v_{1, 2} = \delta^{-2/3} \big\}.
\]
In addition, the orbit $\gamma_2$ intersects that section in a single point $q_0$, so that 
\begin{equation}%
	\label{eq:q0-def}
	\gamma_2 \cap \SigmaInTwo = \{ q_0 \}.
\end{equation}
The coordinates of $q_0$ satisfy $u_{k, 2} = 0 = v_{k, 2}$ and $r_2 = 0$. 
We also define the exit section  
\begin{equation}
	\SigmaOutTwo := \big\{ (u_{1, 2}, v_{1, 2}, u_{k, 2}, v_{k, 2}, r_2) : u_{1, 2} = \delta^{-1/3} \big\}.
\end{equation}
The resulting geometry is illustrated in~\autoref{fig:chart-K2-1}.
To define the transition map $\Pi_2$ in $K_2$, we consider initial conditions in a small neighbourhood $R_2$ around the point
$q_0$.
\begin{lemma}%
	\label{lemma:K2-uk-stability}
	The invariant set $\{ u_{k, 2} = 0 = v_{k, 2} : 2 \le k \le k_0 \}$ is linearly stable under the flow of \eqref{eq:blowup_k2} if 
	\begin{equation}%
		\label{eq:delta-upper-lower}
		\frac{8^3 a^6}{\pi^6} \varepsilon_0 < \delta < \frac{\varepsilon_0}{\rho^3}. 
	\end{equation}
\end{lemma}
\begin{proof}
	Differentiation of~\eqref{eq:blowup_k2:uk2} with respect to $u_{k, 2}$ shows that for linear stability, we require 
	\begin{equation}%
		\label{eq:k2_condition_on_stability}
		\frac{b_k}{4 A^2} + 2^{1/2} u_{1, 2}(t) < 0 \quad\text{or, more strongly,} \quad 
		u_{1, 2}(t) < \frac{\pi^2}{8 A^2}
	\end{equation}
	for $2 \le k \le k_0$, as $b_k$ is negative and decreasing with
	$k$; recall~\eqref{eq:def-bk}. Given that $u_{1, 2}(T_2) = \delta^{-1/3}$ in $\SigmaOutTwo$, where $T_2$ denotes the (finite) transition time of the orbit $\gamma_2$ between $\SigmaInTwo$ and $\SigmaOutTwo$, 
    it is sufficient to have
	\begin{equation}%
		\label{linear_stab_u12}
		\delta^{-1/3} < \frac{\pi^2}{8 a^2 \varepsilon^{1/3}},
	\end{equation}
	where we have made use of $A = a \varepsilon^{1/6}$. We can simplify the last inequality to 
	\begin{equation}%
		\delta > \frac{8^3  a^6}{\pi^6} \varepsilon;
	\end{equation}
	moreover, since $\varepsilon\in[0,\varepsilon_0)$, it is sufficient to assume 
	\begin{equation}%
		\label{eq:delta-lower}
		\delta > \frac{8^3 a^6}{\pi^6} \varepsilon_0,
	\end{equation}
	which places a lower bound on $\delta$. 
	Finally, the upper bound in the statement of the lemma follows from the definition of $\delta$
	in chart $K_1$.
\end{proof}

\begin{remark}
	The linear stability condition in \eqref{eq:delta-upper-lower} can be satisfied by restricting
	$\varepsilon_0$ on the left-hand-side of the condition so that $\delta$ can be chosen
	sufficiently small for the analysis in charts $K_1$ and $K_3$ to hold, and by then choosing
	$\rho$ small enough for the upper bound on the right-hand side to be satisfied.
\end{remark}

\begin{prop}%
	\label{prop:K2_transition_map}
	The transition map $\Pi_2 : \SigmaInTwo \rightarrow \SigmaOutTwo$ is well-defined in
	a neighbourhood of the point $q_0$, see~\autoref{fig:chart-K2-1}, which maps
	diffeomorphically to a neighbourhood of  $\Pi_2(q_0)$, where
	\[
		\Pi_2(q_0) = \left( \delta^{-1/3}, -\Omega_0 + 2^{1/2} \delta^{1/3} +
		\calO(\delta), \bzero, \bzero, 0 \right). 
	\]
	Moreover, $|u_{k, 2}|$ and $|v_{k, 2}|$ are non-increasing under $\Pi_2$.
\end{prop}
\begin{proof}
	Given~\autoref{lemma:K2-uk-stability}, the system in~\eqref{eq:blowup_k2} can be considered as a regular
	perturbation of the Riccati equation~\eqref{Riccati_1} in a sufficiently small neighbourhood of $q_0$. Then, the  assertions of the proposition follow from~\autoref{prop:riccati} and regular perturbation theory~\cite{Verhulst, MisRoz}.
\end{proof}
\begin{figure}
	\centering
	\begin{overpic}{./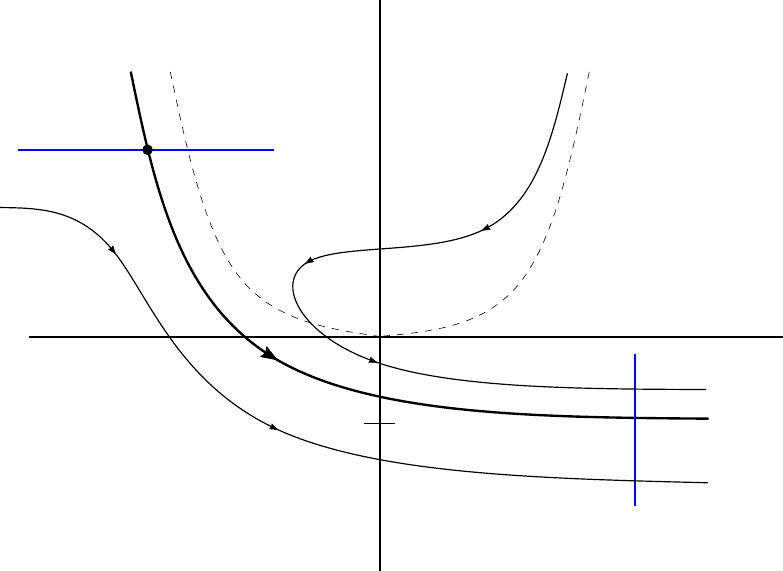}
		\put(98, 27){$u_{1, 2}$}	
		\put(50, 72){$v_{1, 2}$}	
		\put(18, 42){\scalebox{1.5}{$\gamma_2$}}
		\put(16, 51){$q_0$}
		\put(0, 49){\textcolor{blue}{\scalebox{1.5}{$\SigmaInTwo$}}}
		\put(82, 15){\textcolor{blue}{\scalebox{1.5}{$\SigmaOutTwo$}}}
		\put(76, 58){$v_{1, 2} = 2^{-1/2} u_{1, 2}^2$}
		\put(41, 18){$-\Omega_0$}
	\end{overpic}

	\caption{The dynamics in chart $K_2$ on the invariant plane $\{u_{k, 2} = 0 = v_{k, 2}\}\cap
	\{r_2 = 0\}$. For suitably chosen initial conditions and sufficiently small
	$r_2=\varepsilon^{1/3}$, the general dynamics of \eqref{eq:blowup_k2} is a regular perturbation
	of the dynamics on that plane.}%
	\label{fig:chart-K2-1}
\end{figure}

We can also derive estimates on the higher-order modes $\{ u_{k, 2}, v_{k, 2} \}$ during the
transition from $\SigmaInTwo$ to $\SigmaOutTwo$. Consider a point 
\[
    q_1 = \left( u_{1, 2}(0), \delta^{-2/3}, u_{k, 2}(0), v_{k, 2}(0), r_2(0) \right),
\] with $2 \le k \le k_0$, close to $q_0$.
Since orbits of the full system, Equation~\eqref{eq:blowup_k2}, are regular perturbations of the
orbit $\gamma_2$, the transition time $T_2(q_1)$ for the orbit initiated in $q_1$ will be
equal, to leading order, to $T_2$, the transition time for $\gamma_2$,
\begin{equation}%
    \label{eq:K2-transition-time-q1}
	T_2(q_1) = T_2 + \calO \left( u_{k, 2}(0), v_{k, 2}(0), r_2(0) \right).
\end{equation}
The lower bound on $\delta$ in \eqref{eq:delta-upper-lower} then yields the following
estimates on $u_{k,2}$ and $v_{k,2}$.
\begin{lemma}%
	\label{lemma:estimates-K2}
    For any $t \in \left[0, T_2(q_1)\right]$, the following estimates hold:
	\begin{align}%
        \label{eq:K2-estimate-uk}%
	    |u_{k,2}(t)| &\leq \exp(\frac{b_k}{16a^2 r_2(0)} t) |u_{k,2}(0)| + 
		\frac{16a^2 r_2(0)}{|b_k|}  \Big[|v_{k,2}(0)|+\Big(1+ \frac{ 4 a^2 r_2(0)^2 }{|b_k|}\Big)  \sigma \Big]\quad\text{and}  \\
		\label{eq:K2-estimate-vk}
		|v_{k,2}(t)| &\le \exp(\frac{b_k}{4a^2} r_2^2(0) t) |v_{k,2}(0)|+  
		\frac{ 4 a^2 r_2(0)^2 \sigma}{|b_k|},
	\end{align}
	for some constant $C>0$ and $
	0<\kappa \leq \sigma <1$, where $\kappa$ is as in \eqref{eq:initial-data-L2}. 
\end{lemma}
\begin{proof}
For $(\tilde u_{1,2}, \ldots, \tilde u_{k_0,2}, \tilde v_{1,2}, \ldots \tilde v_{k_0,2})$ in 
\[
	\begin{multlined}
	\calB_2 =  \bigg\{(\tilde u_{1, 2}, \dots, \tilde u_{k_0, 2} , \tilde v_{1, 2}, \dots, \tilde v_{k_0, 2} )
		 :   \tilde u_{k, 2}, \tilde v_{k,2} \in  C[0, T_2(q_1)], 
			\text{ with }  \\	
		 \sup_{[0,T_2(q_1)]}|\tilde u_{k, 2}(t)| \le  C_{k}, \; 
			  \sup_{[0,T_2(q_1)]}|\tilde v_{k, 2}(t)| \le  C_k \text{ for } 1 \le k \le k_0,  
			   \text{ and } \sum_{k=1}^{k_0} C_k^2 \leq \tilde\sigma \bigg\},
	\end{multlined}
\]
consider $M_k(\tilde u_{2,2}, \ldots \tilde u_{k_0, 2}) = \sum_{i,j=2}^{k_0} \eta_{i,j}^k \tilde
u_{j,2}\tilde u_{1,2}$ and the higher-order terms $H_{k,2}^u= H_{k,2}^u(\tilde u_{1,2}, \ldots,
\tilde u_{k_0,2}, \tilde v_{1,2}, \ldots \tilde v_{k_0,2})$ and $H_{k,2}^v= H_{k,2}^v(\tilde
u_{1,2}, \ldots, \tilde u_{k_0,2}, \tilde v_{1,2}, \ldots \tilde v_{k_0,2})$.  Thus, we define a
map $\mathcal N_2$ via $(\tilde u_{1,2}, \ldots, \tilde u_{k_0,2}, \tilde v_{1,2}, \ldots
\tilde v_{k_0,2}) \mapsto (u_{1,2}, \ldots,  u_{k_0,2},  v_{1,2}, \ldots v_{k_0,2})$, where $(u_{1,2},
\ldots,  u_{k_0,2},  v_{1,2}, \ldots v_{k_0,2})$ are solutions of \eqref{eq:blowup_k2}.  To
obtain the estimates stated in the lemma, we shall show that $\mathcal N_2 : \mathcal B_2 \to \mathcal B_2$.
As $r_2(t)$ is constant in chart $K_2$, from \eqref{eq:blowup_k2:vk2} we conclude
\begin{equation*}
\begin{aligned}
	|v_{k,2}(t)| &\le \exp(\frac{b_k}{4A^2} r_2^3(0) t) |v_{k,2}(0)| + 
	\exp(\frac{b_k}{4A^2} r_2^3(0) t)\int_0^t \exp(-\frac{b_k}{4A^2} r_2^3(0) s) |H_{k,2}^v| ds\\
	&\le \exp(\frac{b_k}{4A^2} r_2^3(0) t) |v_{k,2}(0)| +  
	\frac{4 A^2 r_2(0)}{|b_k|}\sigma
\end{aligned}
\end{equation*}
for all $t \in \left[ 0, T_2(q_1) \right]$, where 
$|H^v_{k,2}|\leq C r_2(0)^4 \tilde \sigma \leq r_2(0)^4 \sigma$.
Applying the variation of constants formula to \eqref{eq:blowup_k2:uk2}, we find
\begin{equation*}
	\begin{aligned}
		|u_{k,2}(t)| &\le \exp\bigg(\frac{(2-\sqrt{2})b_k}{8A^2} t\bigg) |u_{k,2}(0)| 
	+ \frac{8A^2}{(2-\sqrt{2})|b_k|} \bigg( \sup_{t \in \left[ 0, T_2(q_1) \right]}
		|v_{k,2}(t)| + C \tilde\sigma \bigg) \\
	&\leq  \exp(\frac{b_k}{16A^2} t) |u_{k,2}(0)|\
		+ \frac{16 A^2}{|b_k|} \bigg[ |v_{k, 2}(0)| + \Big(1+ \frac{ 4 a^2 r_2(0)^2 }{|b_k|}\Big)\sigma \bigg]
	\end{aligned}
\end{equation*}
for all $t \in \left[ 0, T_2(q_1) \right]$. Thus, for appropriately chosen $0<r_2(0)<1$ and
$0<\sigma<1$, we obtain that $\mathcal N_2: \mathcal B_2 \to \mathcal B_2$, as claimed, which
implies \eqref{eq:K2-estimate-uk} and \eqref{eq:K2-estimate-vk}. 
\end{proof}
\begin{remark}
Since $A = a \varepsilon^{1/6} = a {\left( r_2(0) \right)}^{1/2}$, the first term
in~\eqref{eq:K2-estimate-uk} is equal to $\exp\big(-\frac{c}{r_2(0)}\big) |u_{k, 2}|$ at $t =
T_2(q_1)$, with $c > 0$ a constant, while the second term has the form of an $\calO \left(
r_2(0) \right)$-correction.

The estimate for $v_{k, 2}(t)$ in~\eqref{eq:K2-estimate-vk} implies the bound $\exp \left( -c r_2^2(0) \right)|v_{k, 2}(0)| \approx |v_{k, 2}(0)|$ at $t = T_2(q_1)$ for small
$r_2(0)$, as considered here.

Taking more general higher-order terms of the form $H^v=H^v(u^2, uv, v^2)$ in
\eqref{eq:initial_problem}, we would find $H^v_{k,2}$ to be of the order $\calO(r_2(0)^2)$; then,  
the second term in \eqref{eq:K2-estimate-vk} would read $4a^2
\sigma/|b_k|$, which is uniformly bounded in $r_2(0)$ and $k$. 
\end{remark}

%
%
%
%


\subsection{Chart $K_3$}

In chart $K_3$, the blow-up transformation in \eqref{eq:blowup_transform} reads
\[
	u_1 = r_3, \quad v_1 = r_3^2 v_{1, 3}, \quad
	u_k = r_3 u_{k, 3}, \quad v_k = r_3^2 v_{k, 3}, \quad\text{and}\quad
	\varepsilon = r_3^3 \varepsilon_3.
\]
After desingularising by dividing out a factor of $r_3$ from the resulting vector field, we obtain
\begin{subequations}%
	\label{eq:blowup_k3}
	\begin{align}
		r_3' &= F_3 r_3, \\
		v_{1, 3}' &= -2 F_3 v_{1, 3} - 2^{1/2} \varepsilon_3, \\
		u_{k, 3}' &= \left( - F_3 + \frac{b_k}{4 A^2} \varepsilon_3^{1/3} + 2^{1/2} \right) u_{k, 3}
			- v_{k, 3} + \sum_{i, j = 2}^{k_0} \eta^k_{i, j} u_{i, 3} u_{j, 3} 
			+ H^u_{k, 3}, \\
		v_{k, 3}' &= \left( -2 F_3 + \frac{b_k}{4 A^2} r_3^3 \varepsilon_3^{4/3} \right) v_{k, 3}
			+ \varepsilon_3 H^v_{k, 3}, \\
		\label{eq:blowup_k3:eps_3}
		\varepsilon_3' &= -3 F_3 \varepsilon_3,
	\end{align}
\end{subequations}
where
\[
	F_3 = F_3(r_3, v_{1, 3}, u_{k, 3}, v_{k, 3}, \varepsilon_3) = - v_{1, 3} + 2^{-1/2} + 2^{-1/2} \sum_{j = 2}^{k_0} u_{j, 3}^2 + H^u_{1,3},
\]
with
\begin{align*}
	H^u_{1, 3} &= \calO \left( r_3 \varepsilon_3, r_3^2 v_{1, 3}^2, r_3^2 v_{j, 3}^2, r_3 v_{1,3},
		r_3 u_{j, 3} v_{j, 3}, r_3 u_{j, 3}^2, r_3 u_{i, 3} u_{j, 3} u_{l, 3} \right), \\
	H^u_{k, 3} &= \calO \left( r_3^2 v_{1, 3} v_{k, 3}, r_3^2 v_{i, 3} v_{j, 3}, r_3 v_{k, 3},\right. \\
 &\qquad\qquad \left.
		r_3 u_{k, 3} v_{1, 3}, r_3 u_{i, 3} v_{j, 3}, r_3 u_{k, 3}, r_3 u_{i, 3} u_{j, 3}, 
		r_3 u_{i, 3} u_{j, 3} u_{l, 3} \right),\quad\text{and} \\
	H^v_{k, 3} &= \calO \left(
	r_3^4 v_{1, 3} v_{k, 3}, r_3^4 v_{i, 3} v_{j, 3} \right)
\end{align*}
for $2 \le i, j, l \le k_0$.

As in $K_1$, we can rewrite \eqref{eq:blowup_k3} in the form
\begin{subequations}%
	\label{eq:blowup_k3_mu}
	\begin{align}
		r_3' &= F_3 r_3, \\
		v_{1, 3}' &= -2 F_3 v_{1, 3} - 2^{1/2} \varepsilon_3, \\
		u_{k, 3}' &= \left( - F_3 + \frac{b_k}{4 A^2} \varepsilon_3^{1/3} + 2^{1/2} \right) u_{k, 3}
			- v_{k, 3} + \sum_{i, j = 2}^{k_0} \eta^k_{i, j} u_{i, 3} u_{j, 3} 
			+ H^u_{k, 3}, \\
		v_{k, 3}' &= \left( -2 F_3 + \frac{b_k}{4 A^2} r_3^3 \big(\varepsilon_3^{1/3}\big)^4 \right) v_{k, 3}
			+ \varepsilon_3 H^v_{k, 3}, \\
		\label{eq:blowup_k3_mu:mu3}
		\big( \varepsilon_3^{1/3} \big)'  &= - F_3 \varepsilon_3^{1/3}
	\end{align}
\end{subequations}
for $2\leq k \leq k_0$.

As mentioned already, the portion $\gamma_3 := \kappa_{23}(\gamma_2)$ of the orbit $\gamma_2$ from
chart $K_2$ with $u_{1, 2} > 0$, transformed to $K_3$, has the expansion
\[
	\gamma_3=\left( 0, - \Omega_0 \varepsilon_3^{2/3} + 2^{1/2} \varepsilon_3 + \calO \big( \varepsilon_3^{5/3}
	\big), \bzero, \bzero, \varepsilon_3 \right) 
\]
as $\varepsilon_3 \rightarrow 0$. Thus, we see that $\gamma_3$ approaches the origin in chart $K_3$.
Hence, it follows that the centre manifold $M_{k_0, 1}$ from chart $K_1$ passes through a neighbourhood of the origin,
which is a hyperbolic steady state for~\eqref{eq:blowup_k3_mu}.

Let $w_k := (0, 0, \dots, 1, \dots, 0)$, with $2 \le k \le k_0$, denote the vector with $k_0 - 1$
entries which are all equal to $0$ except for the $(k - 1)$-th entry, which equals $1$. With that notation, a direct calculation shows the following result.
\begin{lemma}%
	\label{lemma:K3-eigenvalues-1}
	The origin is a hyperbolic steady state of Equation~\eqref{eq:blowup_k3_mu}, with the following eigenvalues and eigenvectors in the corresponding linearisation:
	\begin{itemize}
		\item the simple eigenvalue $\frac{\sqrt{2}}{2}$ with eigenvector $(1, 0, \bzero, \bzero, 0)$, corresponding to $r_3$;
		\item the simple eigenvalue $-\sqrt{2}$ with eigenvector $(0, 1, \bzero, \bzero, 0)$, corresponding to $v_{1, 3}$;
		\item the eigenvalue $\frac{\sqrt{2}}{2}$ with multiplicity $k_0-1$ and eigenvectors $(0, 0, w_k, \bzero, 0)$, corresponding to $u_{k, 3}$ ($2 \le k \le k_0$);
		\item the eigenvalue  $-\sqrt{2}$ with multiplicity $k_0-1$ and eigenvectors $(0, 0, \frac{\sqrt{2}}{3} w_k, w_k, 0)$, corresponding to $v_{k, 3}$ ($2 \le k \le k_0$); and
		\item the simple eigenvalue $-\frac{\sqrt{2}}{2}$ with eigenvector $(0, 0, \bzero, \bzero, 1)$, corresponding to
			$\varepsilon_3^{1/3}$. 
	\end{itemize}
\end{lemma}
\begin{remark}
	Since 
	\[
		- \frac{\sqrt{2}}{2} = - \sqrt{2} + \frac{\sqrt{2}}{2},
	\]
	the eigenvalues of \eqref{eq:blowup_k3_mu} are in resonance. Potential second-order resonant
	terms are $r_3 v_{1, 3}$, $r_3 v_{k, 3}$, and $u_{i, 3} v_{j, 3}$.  While resonances are also
	observed in the singularly perturbed planar fold~\cite{KruSzm3}, the resonant terms differ,
	which is due to the  formulation of the governing equations in chart $K_3$ in terms of
	$\varepsilon_3^{1/3}$. Furthermore, the higher dimensionality of \eqref{eq:blowup_k3_mu}
    allows for a richer resonance structure which may be explored in future work.
\end{remark}

The entry section $\SigmaInThree$ in chart $K_3$, which is obtained by transformation of the exit section $\SigmaOutTwo$ from $K_2$, is given by
\begin{equation}
	\SigmaInThree = \{ (r_3, v_{1, 3}, u_{k, 3}, v_{k, 3}, \mu_3) : \varepsilon_3 = \delta \}, 
\end{equation}
where we consider the set of initial conditions
\begin{multline}%
	\label{eq:k3-R3-definition}
	R_3 = \{(r_3, v_{1, 3}, u_{k, 3}, v_{k, 3}, \varepsilon_3)\; | \; r_3 \in [0, \rho], v_{1,3} \in
		[-\beta, \beta], \\ 
	|u_{k, 3}| \le C_{u_{k, 3}}, |v_{k, 3}| \le C_{v_{k, 3}} \text{ for } 2 \le k \le k_0, \text{ and }\varepsilon_3 = \delta \} \subset \SigmaInThree.
\end{multline}
Here, $\beta, \Cin_{u_{k, 3}}$, and $\Cin_{v_{k, 3}}$, for $2 \le k \le k_0$, are appropriately defined small constants. We also introduce
the exit chart
\begin{equation}
	\SigmaOutThree := \left\{ (r_3, v_{1, 3}, u_{k, 3}, v_{k, 3}, \varepsilon_3)
		: r_3 = \rho \right\}.
\end{equation}
Our aim is to describe the transition map $\Pi_3 : R_3 \rightarrow \SigmaOutThree$. Therefore, since 
$F_3$ is bounded away from zero near the origin, we can divide the vector field in~\eqref{eq:blowup_k3_mu}
by $F_3$, which results in 
\begin{subequations}%
	\label{eq:k3_F3_divided}
	\begin{align}
		r_3' &= r_3, \\
		v_1' &= -2 v_1 - 2^{1/2} \frac{\big(\varepsilon_3^{1/3}\big)^3}{F_3}, \\
		u_k' &= \bigg( -1 + \frac{b_k}{4 A^2} \frac{\varepsilon_3^{1/3}}{F_3} + \frac{2^{1/2}}{F_3} \bigg) u_k 
			- \frac{1}{F_3} v_k + \frac{1}{F_3} \sum_{i, j = 2}^{k_0} 
				\eta_{i, j}^k u_i u_j + \frac{1}{F_3} H^u_k, \\
		v_k' &= \bigg( -2 + \frac{b_k}{4 A^2} \frac{r_3^3 \big(\varepsilon_3^{1/3}\big)^4}{F_3} \bigg) v_k 
			+ \frac{\big(\varepsilon_3^{1/3}\big)^3}{F_3} H^v_k, \\
		\big( \varepsilon_3^{1/3} \big)'  &= - \varepsilon_3^{1/3},
	\end{align}
\end{subequations}
where the prime denotes differentiation with respect to the new, rescaled, time variable.
Here, we have suppressed the subscript $3$ in \eqref{eq:k3_F3_divided} for convenience of notation, and will do so for the remainder
of the section.

The above rescaling of time by $F_3$ results in the eigenvalues of the linearisation about the origin being rescaled by a factor $2^{-1/2}$. \autoref{lemma:K3-eigenvalues-1} hence now implies the following:
\begin{lemma}%
	\label{lemma:K3-eigenvalues-2}
	The origin is a hyperbolic steady state of Equation~\eqref{eq:k3_F3_divided}, with the following eigenvalues in the corresponding linearisation:
	\begin{itemize}
		\item the simple eigenvalue $1$, corresponding to $r_3$;
		\item the simple eigenvalue $-2$, corresponding to $v_{1}$;
		\item the eigenvalue $1$ with multiplicity $k_0-1$, corresponding to $u_{k}$ ($2 \le k \le k_0$);
		\item the eigenvalue $-2$ with multiplicity $k_0-1$, corresponding to $v_{k}$ ($2\le k \le k_0$); and
		\item the simple eigenvalue $-1$, corresponding to $\varepsilon_3^{1/3}$. 
	\end{itemize}
	The associated eigenvectors are as given in~\autoref{lemma:K3-eigenvalues-1}.
\end{lemma}

To obtain estimates for the transition map $\Pi_3$, we follow a procedure that is analogous to that
in~\cite{KruSzm3} for chart $K_3$. We begin by separating out terms containing $r_3$
in~\eqref{eq:k3_F3_divided}. To that end, we expand
\begin{equation}%
	\label{eq:k3-F3-expansion}
		\frac{1}{F_3(v_1, u_k, r_3)} = G_3(v_1,  u_k) + r_3 J(v_1, u_k, r_3), 
\end{equation}
in a neighbourhood of the steady state at the origin, where 
\begin{equation}
	G_3(v_1, u_k) = \frac{1}{2^{-1/2} -v_1 + 2^{-1/2} \sum_{j = 2}^{k_0}{u_j^2}}
\end{equation}
and $J$ is a smooth function of $v_1, u_k, r_3$ in the same neighborhood.
With the above notation, we can rewrite Equation~\eqref{eq:k3_F3_divided} as stated below.
\begin{lemma}
	For $r_3 \ge 0$ sufficiently small, \eqref{eq:k3_F3_divided} can be written as
	\begin{subequations}%
		\label{eq:k3_F3_divided_rewrite}
		\begin{align}
			r_3' &= r_3, \\
			v_1' &=-2 v_1 - 2^{1/2} \varepsilon_3 G_3 + \varepsilon_3 r_3 J_{v_1}, \\
			u_k' &= \left( -1 + \frac{b_k}{4 A^2} \varepsilon_3^{1/3} G_3 + 2^{1/2} G_3 \right) u_k - G_3 v_k
					+ G_3 \sum_{i, j = 2}^{k_0} \eta_{i, j}^k u_i u_j + r_3 J_{u_k}, \\
			v_k' &= \Big(-2 + \frac{b_k}{4 A^2} \big(\varepsilon_3^{1/3}\big)^4 r_3^3 G_3\Big) v_k + \varepsilon_3 r_3 J_{v_k}, \label{eq:k3_F3_divided_rewrite_vk}\\
			\big( \varepsilon_3^{1/3} \big)' &= - \varepsilon_3^{1/3},
		\end{align}
	\end{subequations}
	where $J_{v_1}(r_3, v_1, u_k, v_k, \varepsilon_3^{1/3})$, $J_{u_k}(r_3, v_1, u_k, v_k,
	\varepsilon_3^{1/3})$, and $J_{v_k}(r_3, v_1, u_k, v_k, \varepsilon_3^{1/3})$ are smooth
	functions.
\end{lemma}
\begin{proof}
	Using~\eqref{eq:k3-F3-expansion} in~\eqref{eq:k3_F3_divided} and collecting $r_3$-dependent terms, we obtain~\eqref{eq:k3_F3_divided_rewrite}. The functions $J_{v_1}$, $J_{u_k}$, and $J_{v_k}$
	are defined by summation and multiplication between the variables in chart $K_3$ and the functions $H^u_{k}$, $H^v_{k}$, and $J$, and are
	hence smooth in their arguments in the neighbourhood of the origin we are considering.
\end{proof}


We now have the following result for the transition map $\Pi_3$.
\begin{prop}%
	\label{prop:K3:transition}
	The transition map $\Pi_3: R_3\rightarrow \SigmaOutThree$ is well-defined. Let $(\rin_3,
	\vonein, \ukin, \vkin, \delta)\in R_3$, as defined in~\eqref{eq:k3-R3-definition}, where
	$k=2,\dots, k_0$, and let $T_3$ be the corresponding transition time between $\SigmaInThree$ and
	$\SigmaOutThree$ under the flow of~\eqref{eq:k3_F3_divided_rewrite}. Then, the map $\Pi_3$ is
	given by
	\[
		\Pi_3(\rin_3, \vonein, \ukin, \vkin, \delta) 
			= \left(\rho, \Pi^{k_0}_{3, v_1}, \Pi^{k_0}_{3, u_k}, \Pi^{k_0}_{3, v_k}, 
				\delta^{1/3} \frac{\rin_3}{\rho} \right),
	\]
	where
	\begin{align}
		\left\vert \Pi_{3, v_1}(\rin_3, \vonein, \ukin, \vkin, \delta) \right\vert
			&\le {\left(\frac{\rin_3}{\rho}\right)}^2 \left[|\vonein| + \Cout_{v_{1,3}}(1 + \rin_3 \log \rin_3)\right], \\
		\left\vert \Pi_{3, u_k}(\rin_3, \vonein, \ukin, \vkin, \delta) \right\vert	
			&\le \Cout_{u_{k, 3}},\quad\text{and} \\
		\left\vert \Pi_{3, v_k}(\rin_3, \vonein, \ukin, \vkin, \delta) \right\vert
			&\le \Cout_{v_{k, 3}},
	\end{align}
	for positive constants $\Cout_{v_{1,3}}$, $\Cout_{u_{k, 3}}$, and $\Cout_{v_{k, 3}}$.
\end{prop}
\begin{proof}
From~\eqref{eq:k3_F3_divided_rewrite}, we have that 
\begin{equation}%
	\label{eq:K3-r-mu-exact}
	r_3(t) = \rin_3 e^{t} \quad \text{and} \quad \varepsilon_3(t) = \delta e^{-3t},
\end{equation}
which gives the transition time 
\begin{equation}
	T_3 = \log \frac{\rho}{\rin_3}
\end{equation}
between $\SigmaInThree$ and $\SigmaOutThree$.
	
For $\sum_{k=2}^{k_0}|\tilde u_k(t)|^2 \leq  \sigma$ and $\sum_{k=2}^{k_0}|\tilde v_k(t)|^2 \leq \sigma$, with $0 < \sigma \leq 1$, 
and  $|v_1(t)|\leq 1/(2\sqrt{2})$ for $t \in [0,T_3]$, consider $J_{v_1} = J_{v_1}(\tilde v_1, \tilde u_k, \tilde v_k)$ and
$J_{l_k}=J_{l_k}(v_1, \tilde u_k, \tilde v_k)$, with $l=u,v$ and $k=2, \ldots, k_0$, as well as $G_3 = G_3(v_1, \tilde u_k)$.
We observe that 
	\begin{equation}\label{estim_G3_1}
		\begin{aligned}
		G_3(v_1(t), \tilde u_k(t)) & \le \frac{1}{2^{-1/2} - | v_1(t)|} = 2^{1/2} 
			+ 2^{1/2}\sum_{n = 1}^{\infty} 2^{\frac{n }{2}} |  v_1(t) |^n \\
			& \leq 2^{1/2} 
			\left( 1+ 2^{1/2}\frac {| v_1(t)|}{ 1- \sqrt{2} | v_1(t)|}\right) \leq 2^{1/2} \big( 1+ 2^{3/2}| v_1(t)|\big)
		\end{aligned}
	\end{equation}
for $|v_1(t)|\leq 1/(2\sqrt{2})$,	and 
	\begin{equation}
		\begin{aligned}
			G_3(v_1(t), \tilde u_k(t)) 
			&\ge \frac{1}{2^{-1/2} + |v_1(t)| + 2^{-1/2}\sigma}=: C_1\geq 1.
		\end{aligned}
	\end{equation}
%
Then, using the boundedness of $G_3$ and $J_{v_k}$, from Equation~\eqref{eq:k3_F3_divided_rewrite_vk} for $v_k$
we obtain directly 
\[
	|v_k(t)|\leq e^{-2t + B_3(t)} |v_k(0)| + C\int_0^t e^{-2(t-s)+ B_3(t) - B_3(s) } \varepsilon_3(s) r_3(s)ds 
		\leq e^{-2t} |v_k(0)| + \frac{C \rho a^2 \sigma}{|b_k|}, 
\]
where $B_3(t)= \rin_3 \delta^{4/3} b_k(1- e^{-3t})/(12A^2)$. 
To determine the asymptotic behaviour of $v_1$, we define a new variable~$z$ by
	\begin{equation}%
		\label{eq:k3_z_v1}
		v_1 = e^{-2t} \left( \vonein + z \right), 
	\end{equation}
	where  for $t = 0$ it follows that $z(0) = 0$.  A direct calculation yields
	\begin{align*}
		v_1' &= e^{-2t} z' - 2 e^{-2t} \left( \vonein + z \right), \\
		-2 v_1 - \delta e^{-3t} G_3 + \rin_3 \delta e^{-2t} J_{v_1} &= e^{-2t} z' 
			- 2 e^{-2t} \left( \vonein + z \right),\quad\text{and} \\
		- 2 e^{-2t} \left( \vonein + z \right) + \delta e^{-3t} G_3 + \rin_3 \delta e^{-2t} J_{v_1} &= 
			e^{-2t} z' - 2 e^{-2t} \left( \vonein + z \right)
	\end{align*}
	or, equivalently,
	\begin{equation}
		z' = -2^{1/2} e^{-t} \delta G_3 + \rin_3 \delta J_{v_1}.
	\end{equation}
	Then, the boundedness of $G_3$ and $J_{v_1}$ implies  
	\begin{equation*}
		|z(t)| \leq C \delta(1 - e^{-t} + \rin_3 t).
	\end{equation*}
	Reverting to the original variable $v_1$ via~\eqref{eq:k3_z_v1}, we find 
	\[
		|v_1(t)|\leq  e^{-2t}\big[|\vonein| + C\delta
		(1 - e^{-t} + \rin_3 t)\big]\le   |\vin_1| + 2 \delta C_2 \leq 1/(2\sqrt{2}) \quad \text{for all } t \in [0,	T_3]
	\]
	with sufficiently small $|\vin_1|$,	and 
	\begin{equation}
		|v_1 (T_3)| \leq 
		{\left(\frac{\rin_3}{\rho}\right)}^2 
		\left[|\vonein| + C\delta\left(1  
			+ \rin_3 \log \frac{\rho}{\rin_3}\right) \right],
	\end{equation}
	which proves the first estimate  stated in the theorem.

	Next, we show that $u_{k}$ remains bounded throughout the transition through chart $K_3$. Once again, we perform an
	estimate using the variation of constants formula
	\begin{multline*}
		u_{k}(t) = \exp\left(\int_0^t U_k(\tau) d \tau\right) \ukin \\
			+ \int_0^t \exp\left(\int_s^t U_k(\tau) d \tau\right) 
				\left( - G_3(v_1, \tilde u_k) v_k + G_3(v_1, \tilde u_k) \sum_{i,j=2}^{k_0}\eta^k_{i, j} \tilde u_i \tilde u_j + r_3 J_{u_k} \right) ds,
	\end{multline*}
	where
	\[
		\begin{aligned}
			U_k(\tau) 
			&= - 1 + \frac{b_k}{4A^2}\delta^{1/3}e^{-\tau} G_3(v_1(\tau), \tilde u_k(\tau)) + 2^{1/2} G_3( v_1(\tau), \tilde u_k(\tau)).
		\end{aligned}
	\]
	Using our assumptions on $\tilde u_k$ and $\tilde v_k$, the estimate for $G_3(v_1, \tilde u_k)$ in
	\eqref{estim_G3_1}, and the fact that $b_k < 0$, we find
	\begin{equation}%
		\label{eq:K3-est-I1}
		\begin{aligned}
			\calI_1 (s)&:= \int_s^t U_k(\tau) d\tau \\
			&= s - t + \frac{b_k}{4A^2}\delta^{1/3} \int_s^t e^{-\tau} G_3(v_1(\tau), \tilde u_k(\tau)) d\tau
				+ 2^{1/2} \int_s^t G_3(v_1(\tau), \tilde u_k(\tau)) d\tau \\
			&\le t - s + \frac{b_k}{4A^2}\delta^{1/3} C_1 \left( e^{-s} - e^{-t} \right) 
				+ 4 \int_s^t |v_1(\tau)| d \tau 
		\end{aligned}
	\end{equation}
	for $0 \le s \le t \le T_3$. To estimate the integral in the last inequality, we observe that
	\begin{equation*}
		\left\vert v_1(\tau) \right\vert \le e^{-2\tau} 
			\big[|\vin_1| + C_2\delta (1  + \tau)\big]
	\end{equation*}
	and write
	\begin{equation*}
		\begin{aligned} 
		\int_s^t |v_1(\tau)| d\tau \leq \frac 14(2|\vin_1| +  3 C_2 \delta) (e^{-2s} - e^{-2t}) + C_2 \frac \delta 2 (e^{-2s}s - e^{-2t}t). 
		\end{aligned}
	\end{equation*}
	The inequality in~\eqref{eq:K3-est-I1} then becomes
	\begin{equation*}
		\calI_1(s) \le t - s + \frac{b_k}{4A^2}\delta^{1/3} C_1 \left( e^{-s} - e^{-t} \right) 
		+(2|\vin_1| +  3 C_2 \delta) (e^{-2s} - e^{-2t}) + 2 C_2 \delta (e^{-2s}s - e^{-2t}t);
	\end{equation*}
thus,
\begin{equation*}
    \exp(\calI_1(0)) \leq   C \exp\Big(t+\frac{b_k}{4 A^2}\delta^{1/3} C_1\big(1 - e^{-t}\big)\Big),
\end{equation*}
where $C= \exp(2|\vin_1| +  5 C_2 \delta)$. For the second term in $u_k(t)$, using that $t\leq e^t$
and $1\leq e^t$ for $t\geq 0$, we have 
\begin{align*}
  &  \int_0^t \exp(\calI_1(s)) ds  \leq  \mathcal I_2(t) \int_0^t \exp\Big( - s + \frac{b_k}{4A^2}\delta^{1/3} C_1  e^{-s}  
+ 	C_3  e^{-2s}  + 2 C_2\delta  e^{-2s}s \Big) ds
\\
& \quad  \leq \mathcal I_2(t)
\int_0^t e^{-s}\exp\Big(\Big( \frac{b_k}{4A^2}\delta^{1/3} C_1    
+ 	C_4 \delta \Big)e^{-s} \Big) ds \\
& \quad = \mathcal I_2(t) \frac{ 4 A^2}{C_1|b_k| \delta^{1/3} - 4 A^2C_4} \Big[ \exp \Big(\Big(\frac{b_k \delta^{1/3}}{4A^2}C_1+ C_4\delta \Big)  e^{-t}\Big) -\exp \Big(\frac{b_k \delta^{1/3}}{4A^2}C_1+ C_4\Big) \Big]\\
& \quad \leq C_5\frac{ 4 A^2}{C_1|b_k| \delta^{1/3} - 4 A^2C_4} e^t
\leq C_6 \frac {a^2}{|b_k|} \varepsilon^{1/3} \frac {\rho}{\rin_3}
\leq \frac{C_7 \rho a^2}{|b_k|},    
\end{align*}
where $C_3 = 2|\vin_1| +  3 C_2 \delta $  and 
\[
	\mathcal I_2(t)=\exp\Big(t  - \frac{b_k}{4A^2}\delta^{1/3} C_1   e^{-t}  
	- C_3  e^{-2t} - 2 C_2 \delta e^{-2t}t\Big).
\]
The estimates for $v_1$ and $v_k$ then yield 
\[
	|u_k(t)| \leq C_1|u_k(0)| + \frac{C_2 \rho a^2}{|b_k|}\Big[|v_k(0)|+\frac{C \rho a^2
	\sigma}{|b_k|}+ \sigma (1+ \rho)\Big]. 
\]
Hence, for sufficiently small $|u_k(0)|$, $|v_k(0)|$, and $\rho$, we find 
\[
	\sum_{k=2}^{k_0} |u_k(t)|^2 \leq \sigma\quad\text{and}\quad \sum_{k=2}^{k_0} |v_k(t)|^2 \leq
	\sigma\quad\text{ for all } t \in [0, T_3]. 
\]
Then, application of a fixed point argument as in chart $K_1$ yields the estimates stated in the theorem. 
%
\end{proof}


\begin{figure}
	\centering
	\begin{overpic}[scale=0.21]{./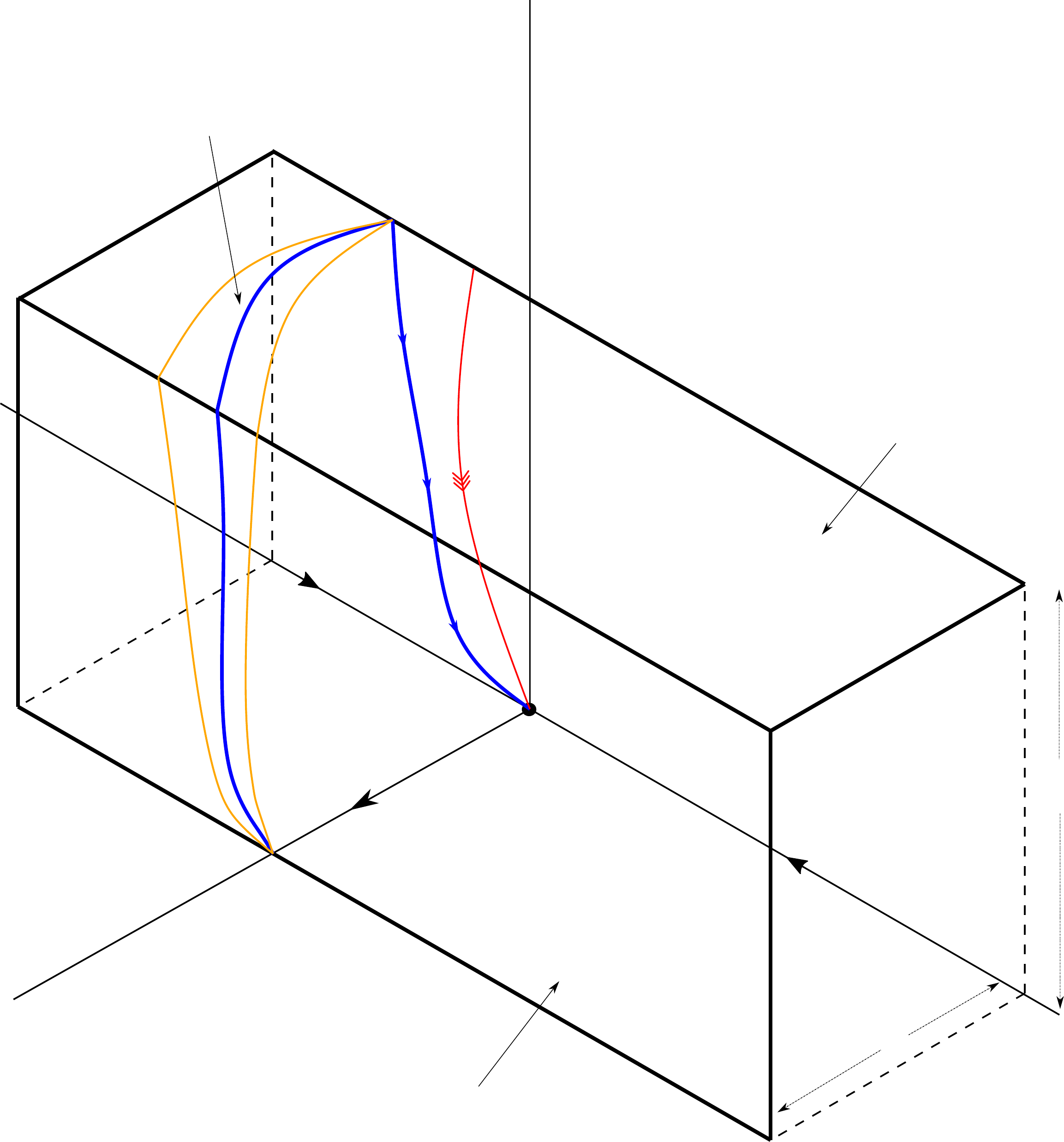}
		\put(46, 33){$q$}	
		\put(91.5, 29.5){$\delta$}
		\put(77.1, 7.5){$\rho$}
		\put(79, 62){$\SigmaInThree$}
		\put(0, 10){$r_3, u_{k, 3}$}
		\put(48, 100){$\varepsilon_3$}
		\put(16, 90){$M_{k_0, 3}$}
		\put(38, 0){$\SigmaOutThree$}
		\put(88, 7){$v_{1, 3}, v_{k, 3}$}
	\end{overpic}
	\caption{Dynamics in chart $K_3$. As the orbit $\gamma_2$ from chart $K_2$, after
	transformation to $K_2$ via $\gamma_3 := \kappa_{23}(\gamma_2)$ (in red), passes through the
	origin $q$, the invariant manifold $M_{k_0, 3}$ contains $q$. The transition map $\Pi_3$ is
	defined in a neighbourhood of the intersection $M_{k_0, 3} \cap \SigmaInThree$.}%
	\label{fig:chart3-1}
\end{figure}
%
%
%
%


\subsection{Proof of main result}%
\label{subsec:transition_map}

Let us now combine the analysis in the three charts $K_1$, $K_2$, and $K_3$ to give the proof
of~\autoref{prop:transition_full}. For $2\leq k \leq k_0$, the initial conditions $(\uin_{1, 1},\rin_1,\uin_{k, 1},\vin_{k, 1},\epsin_1)$ in $K_1$ are assumed to lie in $R_1 \subset
\SigmaInOne$, where $R_1$ is defined  in \eqref{eq:def_R}. 
Applying the transition map $\Pi_1$, see \autoref{prop:K1:transition-map}, we obtain 
\begin{align*}
	\Pi_1 (R_1)= \Big\{ &|\uout_{1, 1} + 2^{1/4}| \le \Cout_{u_{1, 1}}, \
	\rout_1	\in [0, \rho], \ \epsout_1 = \delta, \\ &|\uout_{k, 1}| \le \Cout_{k, 1},\ \text{and}\
		|\vout_{k, 1}| \le \Cout_{v_{k, 1}} \delta^{2/3} \Big\}.
\end{align*}
Transformation of the above set to chart $K_2$ yields
\begin{align*}
	\kappa_{12} \circ \Pi_1 (R_1)= \Big\{ &\uin_{1, 2} = \delta^{-1/3} \uout_{1, 1}, 
		\vin_{1, 2} = \delta^{-2/3},\ |\uin_{k, 2}| \le \delta^{-1/3} |\uout_{k, 1}|,\\
	&|\vin_{k, 2}| \le \delta^{-2/3} |\vout_{k, 1}|,\ \text{and}\ \rin_2 \in [0, \delta^{1/3} \rho] \Big\},
\end{align*}
with $u_{k, 1}$ and $v_{k, 1}$ as above. Since the higher-order modes $\{u_{k, 2}, v_{k, 2}\}$  do not grow in $K_2$, we have 
\begin{equation} 
	\begin{aligned}
		\Pi_2 \circ \kappa_{12} \circ \Pi_1 (R_1) = \Big\{ &\uout_{1, 2} = \delta^{-1/3},\	|\vout_{1, 2} + c_1|\le \Cout_{v_{1,2}}, \\ 
		&|\uout_{k, 2}| \le |\uin_{k, 2}|,\ |\vout_{k, 2}| \le |\vin_{k, 2}|,\ \text{and}\
			\rout_2 \in \left[0, \delta^{1/3} \rho\right] \Big\}.
	\end{aligned}
\end{equation}
Application of the change of coordinates $\kappa_{23}$ yields 
\begin{align*}
	\kappa_{23} \circ \Pi_2 \circ \kappa_{12} \circ \Pi_1 (R) =
	\Big\{ &\rin_3 \in [0, \varepsilon],\ \vin_{1, 3} \in [-\beta, \beta],\ \epsin_3 = \delta,
		|\uin_{k, 3}| = \delta^{1/3}, \\
	&|\uout_{k, 2}| \le \Cin_{u_{k, 3}},\ 
			|\vin_{k, 3}|= \delta^{2/3},\ \text{and}\ |\vout_{k, 2}| \le \Cin_{v_{k, 1}} \Big\},
\end{align*}
where $\beta > 0$ is a small constant.
Finally, we apply the map $\Pi_3$, see~\autoref{prop:K3:transition}, to obtain 
\begin{align*}
	\Pi_3 \circ \kappa_{23} \circ \Pi_2 \circ \kappa_{12} \circ \Pi_1 (R_1)  
		=\Big\{ \rout_3 = \rho,\ \vout_{1, 3},\ \epsout_3 \in [0, \delta],\
			|\uout_{k, 3}| \le \Cout_{u_{k, 3}},\ \text{and}\ 
			|\vout_{k, 3}| \le \Cout_{v_{k, 3}} \Big\}, 
\end{align*}
where $\vout_{1, 3}$ is as in~\autoref{prop:K3:transition}. The result then follows, since
the sections $\SigmaInOne$ and $\SigmaOutThree$ are equivalent to $\DeltaIn$ and
$\DeltaOut$, respectively, transformed into the coordinates of charts $K_1$ and $K_3$, respectively,
and since the systems in~\eqref{eq:galerkin_system_temp} and~\eqref{eq:blowup_original} are
equivalent for $\varepsilon > 0$ sufficiently small. 
%
%
%
%

\section{Conclusions and outlook}\label{concl}

In this work, we have studied, via discretisation, a fast-slow system of partial differential
equations (PDEs) of reaction-diffusion type, Equation~\eqref{eq:initial_problem}, under the
assumption that a fold singularity is present at the origin in the fast kinetics. We have
approximated a family $S_{\varepsilon, \zeta}$ of slow manifolds by their corresponding Galerkin
manifolds $\calC_{\varepsilon, k_0}$, which we have then extended past the singularity by applying
the desingularisation technique known as blow-up \cite{DumortierRoussarie}. Here, it is worth
emphasising that the family $S_{\varepsilon, \zeta}$ is defined via a subsplitting ansatz for the
slow variables; hence, it is not simply a ``generic" perturbation of a classical critical manifold
which is obtained in a finite-dimensional setting by a ``quasi-steady state approximation".

As we have seen, our main result, \autoref{prop:transition_full}, is analogous to what one would
expect in the planar (finite-dimensional) setting \cite{KruSzm3}. While we have shown that the
resulting Galerkin manifolds $\calC_{\varepsilon, k_0}$ approximate the family $S_{\varepsilon,
\zeta}$ away from the fold singularity, a natural next question
concerns the passage to the limit as $k_0 \rightarrow \infty$ near the fold.  In
the normally hyperbolic regime, we know that the limit of $k_0 \rightarrow \infty$ implies a double
limit, with $\varepsilon\rightarrow 0$, as well as a specific scaling law~\cite{EngelHummelKuehn}. 
While it remains open whether that double singular limit is well-defined near non-normally
hyperbolic singularities, our results do lay relevant groundwork. Two conjectures seem plausible
here: (i) an alternative approach may allow one to prove that, potentially under slightly stronger
assumptions, a limiting invariant slow manifold exists uniformly in $\varepsilon\in
(0,\varepsilon_0)$, with $\varepsilon_0>0$ small, as $k_0\to\infty$; or (ii) the limiting object
must diverge for fixed positive $\varepsilon$ as $k_0\to \infty$ due to the coupling between
infinitely many higher-order modes in the Galerkin discretisation. Unfortunately, standard
techniques \cite{kostianko2021,romanov2000} for proving the non-existence of invariant manifolds do
not seem to allow us to verify (ii). However, on the basis of previous work in the normally
hyperbolic regime and the detailed estimates presented here, we can  conjecture that a well-defined
double limit of $\varepsilon \rightarrow 0$ and $k_0\rightarrow \infty$ will exist even near fold
singularities.

We further emphasise that the presence of an additional $2k_0 - 2$ equations after discretisation,
with $k_0$ arbitrarily large, causes several challenges. Thus, a preparatory rescaling of the domain
length is introduced to allow for the application of the blow-up technique; an alternative approach
in previous work on the transcritical and pitchfork singularities \cite{EngelKuehn1} results in a
dynamic boundary value problem. Our rescaling appears natural, since it can be recovered directly
from the original system of PDEs in
\eqref{eq:initial_problem}.
Specifically, taking $u = \varepsilon^{1/3} U$, $v = \varepsilon^{2/3} V$, $t = \varepsilon^{-1/3} \tau$, and 
$x = \varepsilon^{-1/6} X$, which is consistent with our scaling in \eqref{scalling_1}, we obtain 
\begin{equation} \label{rescal}
	\begin{aligned} 
		\partial_\tau U &= \partial_X^2 U - V + U^2 + \varepsilon^p H_u(U,V) && \text{ on } (-a \varepsilon^{1/6}, a \varepsilon^{1/6}),\\
		\partial_\tau V &= \varepsilon \partial_X^2 V - 1 + \varepsilon^q H_v(U,V) && \text{ on } (-a \varepsilon^{1/6}, a \varepsilon^{1/6}),
	\end{aligned} 
\end{equation}
for some $p, q >0$. 
Equation~\eqref{rescal} defines a system of PDEs on a domain
shrinking to the origin as $\varepsilon \to 0$, as is to be expected due to the singular nature of \eqref{eq:initial_problem}. Denoting by $(U_\varepsilon, V_\varepsilon)$
solutions of \eqref{rescal}, and using the boundedness of higher-order terms and the non-positivity
of $U_\varepsilon$ or the boundedness of $U^2_\varepsilon$, which can be achieved by considering a
cut-off function, we obtain the following estimates: 
\[
\begin{aligned}
	\|V_\varepsilon\|^2_{L^\infty(0,T; L^2(\Omega_\varepsilon))} + 
	\varepsilon \|\partial_x V_\varepsilon\|^2_{L^2(0,T; L^2(\Omega_\varepsilon))}
	& \leq C \big( \|V_\varepsilon(0)\|^2_{L^2(\Omega_\varepsilon)} + \varepsilon^{1/6}\big)\quad\text{and} \\
	\|U_\varepsilon\|^2_{L^\infty(0,T; L^2(\Omega_\varepsilon))} + 
	 \|\partial_x U_\varepsilon\|^2_{L^2(0,T; L^2(\Omega_\varepsilon))}
	& \leq C \big(\|U_\varepsilon(0)\|^2_{L^2(\Omega_\varepsilon)} 
		+ \|V_\varepsilon(0)\|^2_{L^2(\Omega_\varepsilon) } + \varepsilon^{1/6}\big),
\end{aligned}
\]
where $\Omega_\varepsilon = (-a \varepsilon^{1/6}, a \varepsilon^{1/6})$ and $C$ is some positive constant independent of $\varepsilon$. These estimates  imply that $U_\varepsilon(\cdot, \varepsilon^{1/6} \cdot)  \rightharpoonup U_0$ in $L^2(0,T; H^1(-a,a))$,
which is independent of $X$, and $V_\varepsilon(\cdot, \varepsilon^{1/6}\cdot) \rightharpoonup V_0$,  $\varepsilon^{1/2}\partial_X V_\varepsilon(\cdot, \varepsilon^{1/6}\cdot) \rightharpoonup W$ in $L^2((0,T)\times(-a,a))$  as $\varepsilon \to 0$, for some $W \in L^2((0,T)\times(-a,a))$. Thus, in
the limit as $\varepsilon \to 0$, we see that $(U_0, V_0)$ satisfies the system of ODEs
\begin{equation} \label{rescal_ODE}
	\begin{aligned} 
		\dv{U}{\tau} &= - V + U^2,\\
		\dv{V}{\tau} &= - 1.
	\end{aligned} 
\end{equation}
Equation~\eqref{rescal_ODE} is precisely the Riccati equation which lies at the heart of the dynamics in our rescaling chart $K_2$.

A consequence of our rescaling of the domain length is, however, that the original fast-slow
structure which is present in the discretised system, Equation~\eqref{eq:galerkin_system_temp}, does
not translate to the blow-up analysis in the three coordinate charts. In particular, there is no
longer a direct correspondence between singular objects in those charts and the layer and reduced
problems pre-blow-up. Since the corresponding flows in the two scalings are equivalent after
``blow-down", the loss of correspondence is merely of technical relevance: while it does entail that
the approach in \cite{wechselberger2012propos} does not apply to~\eqref{eq:blowup_original}, we do
not consider canard dynamics here, as is done there.

As elaborated in~\ref{subsec:example-k0-2}, an additional challenge arises due to the finite-time
blowup which can occur in \eqref{eq:galerkin_system_temp} and which is due to the presence of
additional slow variables $v_{k}$, $2 \le k \le k_0$, after Galerkin discretisation. To avoid
solutions blowing up before they enter a neighbourhood of the singularity at the origin, we defined
an $\varepsilon$-dependent set of initial values $\Rin(\varepsilon) \subset \DeltaIn$, which we
combined with careful estimates for the higher-order modes $\{u_k,v_k\}$ resulting from the
discretisation. We conjecture that this blowup is, in essence, caused by additional fold
singularities that can be reached before the principal singularity at the origin which has been our
focus here. In particular, a future research direction would be the desingularisation of larger
submanifolds where normal hyperbolicity is lost in the Galerkin discretisation; for example, one
could blow up the blue curve in~\autoref{fig:critical-manif-2} or the surface
in~\autoref{fig:critical-manifold-stability-3} in the cases where $k_0=2$ or $k_0=3$, respectively. 

Finally, we briefly place our work into the broader context of singular perturbation problems
arising in an infinite-dimensional context. Firstly, for fast-slow reaction-diffusion systems of the
form in~\eqref{eq:fsstandardPDE}, we have recently gained a better understanding of transcritical
points and generic fold points, including the results presented in this work. In finite dimensions,
such non-hyperbolic points are known to generate only a dichotomy of either fast jumps of
trajectories or an exchange of stability between slow manifolds. Yet, more
degenerate fold points, such as folded nodes or folded saddle-nodes, may generate extremely
complicated local dynamics, including oscillatory patterns, even in fast-slow systems of ODEs. That
classification is likely to become even more complex in the infinite-dimensional setting of (systems
of) PDEs. Secondly, systems of the form in~\eqref{eq:fsstandardPDE} represent one class of
interesting PDEs, where small perturbation parameters and singular limits occur. Other classes
involve fast reaction terms, small diffusion problems, or heterogeneous media with highly
oscillatory coefficients, which all commonly appear in the context of reaction-diffusion systems.
Once one goes beyond reaction-diffusion systems, there are vast classes of PDE-type singular
perturbation problems arising across the sciences. From a mathematical viewpoint, it is immediately
clear that, in any parametrised PDE model, one anticipates possible distinctions between normally
hyperbolic dynamics, where locally a good approximation is achieved by linearisation, and a loss of
normal hyperbolicity along submanifolds in parameter space. Therefore, there is a  need for
developing techniques to tackle a loss of normal hyperbolicity in (systems of) PDEs. Our work is but
one building block towards that  general effort. Last, but not least, we have not yet related
our theoretical approach via Galerkin discretisation with the performance of various numerical
methods for PDEs. We conjecture that there is a link between (a loss of) performance and the
presence of singularities, or non-hyperbolic points, in systems of nonlinear PDEs.

\section*{Acknowledgements}

C.K.\ thanks the VolkswagenStiftung for support via a Lichtenberg Professorship. C.K. and M.E. also thank the DFG
for support via the SFB/TR 109 ``Discretization in Geometry and Dynamics''.
M.E. further acknowledges the support of the DFG through Germany's Excellence Strategy -- The Berlin
Mathematics Research Center MATH+ (EXC-2046/1, project ID 390685689). F.H.~and C.K.~acknowledge
support of the EU within the
TiPES project funded by the European Union Horizon 2020 Research and Innovation
Programme under grant agreement 820970. 

T.Z.\ was supported by the Maxwell Institute Graduate School in Analysis and its
Applications, a Centre for Doctoral Training funded by the UK Engineering and Physical Sciences
Research Council under grant agreement EP/L016508/01, the Scottish Funding Council, Heriot-Watt University, and the
University of Edinburgh.

The authors are grateful to two anonymous reviewers for insightful suggestions and comments that greatly improved the original manuscript.
\appendix

\section{Uniform boundedness and convergence}\label{app:boundconv}

Under appropriate assumptions on initial data, it is possible to show the uniform boundedness of solutions to Equation~\eqref{eq:initial_problem} for sufficiently large times before those solutions reach the singularity at the origin. Uniform boundedness will then imply convergence of the Galerkin discretisation, as shown below. Hence, our finite-dimensional Galerkin manifolds can be interpreted as ``approximately invariant slow manifolds"; the accuracy of the resulting approximation will improve with increasing $k_0$. 

\subsection{Uniform boundedness of solutions}
For simplicity, we first consider the equations in \eqref{eq:initial_problem} without higher-order terms $H^u$ and $H^v$. The parabolic comparison principle for $\hat v_0 \geq v(0,x) \geq \tilde v_0 >0$,  with $x \in (-a,a)$, yields $\hat v \geq v(t,x) \geq \tilde v(t)$, where $\tilde v$ and $\hat v$ satisfy
\[
\begin{aligned} 
\dv{\tilde v}{t} & = - \varepsilon, \quad & \quad\text{with }\tilde v(0) = \tilde v_0, \\
\dv{\hat v}{t} & = - \varepsilon, \quad & \quad\text{with }\hat v(0) = \hat v_0
\end{aligned} 
\]
and, hence, 
\[
\tilde v(t) = \tilde v_0 - \varepsilon t \quad\text{and}\quad \hat v(t) = \hat v_0 - \varepsilon t, \quad \text{with }\tilde v(t) \geq 0\text{ for } t \leq \frac{\tilde v_0} \varepsilon.
\]
Then, for $\tilde u_0 \leq u(0,x) \leq \hat u_0<0$, with $x\in (-a,a)$, we again apply the parabolic comparison principle to obtain that $\tilde u(t) \leq u(t,x) \leq \hat u(t)$, where
\[
\begin{aligned}
 \dv{\hat u}{t} & = - \tilde v + \hat u^2, \quad 
& \quad\text{with }\hat u(0) = \hat u_0, \\
 \dv{\tilde u}{t} & = - \hat v + \tilde u^2, \quad
& \quad\text{with }\tilde u(0) = \tilde u_0. 
\end{aligned}
\]
For $t \leq \tilde v_0/(2 \varepsilon)$, we have $\tilde v \geq \tilde v_0/2$ and can hence estimate $\hat u(t) \leq \bar u(t)$, where 
\[
\dv{\bar u}{t} = - \frac{\tilde v_0}2 + \bar u^2, \quad\text{with } \bar u(0) = \hat u_0,
\]
and
\[
\bar u(t) = \sqrt{ \frac {\tilde v_0} 2} \frac{ \hat u_0 (1+ e^{\sqrt{2\tilde v_0} t}) - \sqrt{\tilde v_0/2}(e^{\sqrt{2\tilde v_0} t}-1)}{ - \hat u_0(e^{\sqrt{2\tilde v_0} t}-1) + \sqrt{\tilde v_0/2} (1+ e^{\sqrt{2\tilde v_0} t})},
\]
which is bounded for all $\tilde v_0>0$ and $t \leq \tilde v_0 /(2 \varepsilon)$. Similarly, we obtain that $\tilde u$ is also uniformly bounded for $\tilde v_0>0$ and $t \leq \tilde v_0 /(2 \varepsilon)$. In sum, we hence have 
\[
\min\big\{\tilde u_0, -\sqrt{\hat v_0}\big\} \leq u(t,x) \leq \max_{t\in [0,\tilde v_0 /(2 \varepsilon)]} \bar u(t) \quad \text{ for } \; 0 \leq t\leq \frac{\tilde v_0}{2 \varepsilon}\text{ and } x \in (-a,a).
\]
Thus, for all $v(0,x) \geq \tilde v_0 >0$ and $u(0,x) \leq \hat u_0<0$, we obtain that solutions of~\eqref{eq:initial_problem}, without higher-order terms $H^u$ and $H^v$, are uniformly bounded for $0\leq t \leq \tilde v_0 /(2 \varepsilon)$.

When considering higher-order terms of the form $H^u(u, v,\varepsilon) = \mathcal O(\varepsilon, uv, v^2, u^3)$ and $H^v(u,v,\varepsilon) = \mathcal O(v^2)$ in \eqref{eq:initial_problem}, for $|u|,|v|\leq 1$, we can assume 
\[
|H^u(u, v,\varepsilon)|\leq \kappa_u(\varepsilon + |uv| + |v|^2 + |u|^3) \quad\text{and}\quad |H^v(u, v,\varepsilon)|\leq \kappa_v |v|^2
\]
for some positive constants $\kappa_u$ and $\kappa_v$. To derive  estimates for the solutions of \eqref{eq:initial_problem}, we apply a fixed point argument: for given $(u^\ast, v^\ast)$ with 
\[
 |u^\ast| \leq \min\Big\{\frac 1 {4\kappa_u}, 1\Big\} \quad\text{and} \quad  0 < v^\ast \leq \min \Big\{ \frac 1 { 2\sqrt{\kappa_v}}, \frac 1 { 4 \kappa_u}, \frac 1{32 \kappa_u^2}, 1\Big\},
\]
we consider $H^v(u^\ast, v^\ast,\varepsilon)$ and $\widetilde H^u(u^\ast, u, v,\varepsilon)$, which is obtained from $H^u(u,v,\varepsilon)$ by replacing the terms of order $uv$ and $u^3$ by $u^\ast v$ and $u^\ast u^2$, respectively. The above assumptions on $u^\ast$ and $v^\ast$ yield
\[
\begin{aligned}
|\widetilde H^u(u^\ast, u, v, \varepsilon)|& \leq \kappa_u (\varepsilon + |u^\ast v|+ v^2 + |u^\ast| u^2) \leq \kappa_u \varepsilon + \frac{|v|}2  + \frac{u^2}4 \quad\text{and} \\
|H^v(u^\ast, v^\ast,\varepsilon)| &\leq \kappa_v |v^\ast|^2 \leq \frac 14,
\end{aligned}
\] 
which ensures 
\[
\begin{aligned}
\dv{\tilde v}{t} & = \varepsilon (-1+ H^v(u^\ast,v^\ast,\varepsilon)) \geq - \frac 5 4 \varepsilon \quad\text{and} 
\\
\dv{\hat v}{t} &= \varepsilon (-1+ H^v(u^\ast,v^\ast,\varepsilon)) \leq - \frac {3\varepsilon} 4.
\end{aligned} 
\]
Then, for initial conditions satisfying
\begin{equation}\label{init_assum_estim}
 -\min \Big\{ \frac 1{4 \kappa_u},1 \Big\} \leq \tilde u_0 < 0 \quad\text{and}\quad \frac{4} 5 \hat v_0  \leq \tilde v_0 \leq \hat v_0 \leq  \min\Big\{\frac  1{2\sqrt{ \kappa_v}}, \frac 1 { 4 \kappa_u}, \frac 1{32 \kappa_u^2}, 1\Big \} 
\end{equation}
and for $0 \leq t \leq 4\tilde v_0/ (5\varepsilon)$, we have 
\[
0 \leq \tilde v_0 - \frac {5\varepsilon} 4   t \leq  \tilde v(t) \leq v(t,x) \leq  \hat v(t) \leq \hat v_0 - \frac{3 \varepsilon} 4 t \leq \min\Big\{\frac  1{2\sqrt{\kappa_v}}, \frac 1 { 4 \kappa_u}, \frac 1 {32 \kappa_u^2}, 1\Big \}. 
\]
For $\hat u$, we obtain
\[
\dv{\hat u}{t} = - \tilde v + \hat u^2 + |\widetilde H^u(u^\ast, \hat u, \hat v, \varepsilon)| \leq - \tilde v_0 + \frac{5 \varepsilon} 4 t +  \frac{\hat v_0}2 - \frac{3 \varepsilon} 8 t  + \kappa_u \varepsilon  + \frac 3 4  \hat u^2, 
\]
which, for $\varepsilon \leq \tilde v_0/(16 \kappa_u)$ and $t \leq 2\tilde v_0/ (7 \varepsilon)$, implies
\[
\dv{\hat u}{t}  \leq - \frac{\tilde v_0}{16} + \frac  5 4 \hat u^2.
\]
In combination with the previous estimates, the fixed point argument  yields uniform boundedness of solutions to \eqref{eq:initial_problem} for $t \leq 2\tilde v_0/ (7 \varepsilon)$ and initial conditions satisfying \eqref{init_assum_estim}. 

\subsection{Convergence of Galerkin discretisation}
The Galerkin approximation $(u_n, v_n)$ for solutions to  \eqref{eq:initial_problem}, with $u_n(t,x) = \sum_{k=1}^n u_k(t) e_k(x)$ and $v_n(t,x) = \sum_{k=1}^n v_k(t) e_k(x)$, satisfies
\begin{equation} \label{Galerkin_approx_PDE}
\begin{aligned}
 \partial_t u_n &= \partial_x^2 u_n - v_n + u_n^2 + H^u (u_n, v_n,\varepsilon) && \text{for }x\in (-a,a)\text{ and }t >0,  \\
 \partial_t v_n &= \varepsilon(\partial_x^2 v_n -1 + H^v(u_n, v_n,\varepsilon)) && \text{for }x\in (-a,a)\text{ and }t >0,\\
&\partial_x u_n(t,x) = 0 = \partial_x v_n(t,x) && \text{for } x = \pm a\text{ and }t>0, \\
& u_n(0,x)=u_{n,0}(x)\quad\text{and}\quad v_n(0,x)=v_{n,0}(x) && \text{for } x\in(-a,a),
\end{aligned}
\end{equation}
where $u_{n,0}$ and $v_{n,0}$ are projections of $u_0$ and $v_0$, respectively, onto the space $V = {\rm span}\{ e_1(x), \ldots, e_n(x)\}$.
Using similar estimates as above and imposing the assumptions on initial conditions in \eqref{init_assum_estim}, we obtain that $u_n(t,x)$ and $v_n(t,x)$ are uniformly bounded in $[0,T]\times [-a, a]$ for $\tilde u_0 \leq u_{n,0}(x)\leq \hat u_0 <0$, $\hat v_0 \geq v_{n,0}(x)\geq \tilde v_0 >0$, and $T \leq 2\tilde v_{0}/ (7 \varepsilon)$.
It follows that we have the a~priori estimates
\[
\begin{aligned}
 \| u_n\|^2_{L^\infty((0,T)\times(-a, a))} + \|u_n\|^2_{L^2(0,T; H^1(-a,a))} + \|\partial_t u_n\|^2_{L^2(0,T; H^{1}(-a,a)^\prime)} & \leq C \quad\text{and} \\
 \| v_n\|^2_{L^\infty((0,T)\times(-a, a))} + \varepsilon\|v_n\|^2_{L^2(0,T; H^1(-a,a))}+ \|\partial_t v_n\|^2_{L^2(0,T; H^{1}(-a,a)^\prime)} &\leq C,
\end{aligned}
\]
with a constant $C>0$ that is independent of $n$, which ensures convergence of $u_n \to u$ weakly in $L^2(0,T; H^1(-a,a))$ and strongly in $L^2((0,T)\times (-a,a))$, as well as of $v_n \to v$ weakly-$\ast$ in $L^\infty(0,T; L^2(-a, a))$ and of $\sqrt{\varepsilon} v_n \to \sqrt{\varepsilon} v$ weakly in $L^2(0,T; H^1(-a,a))$ and strongly in  $L^2((0,T)\times (-a,a))$; see e.g.~\cite{Evans}. Thus, we can pass to the limit as $n \to \infty$ in \eqref{Galerkin_approx_PDE} to conclude that $u$ and $v$ are solutions to the original system in~\eqref{eq:initial_problem}.

Next, considering equations for the differences $u_n - u$ and $v_n-v$ and taking $u_n-u$ and $v_n - v$ as test functions, respectively, we obtain
\[
\begin{aligned}
 \frac 12 \partial_t \|u_n - u\|^2_{L^2(-a,a)} + \|\partial_x(u_n - u)\|^2_{L^2(-a, a)} \leq \frac 12 \|v_n - v\|^2_{L^2(-a, a)} + \frac 12 \|u_n - u\|^2_{L^2(-a, a)} \\
 + \|u_n+u\|_{L^\infty}\|u_n - u\|^2_{L^2(-a, a)}+
h^u(\|u_n\|_{L^\infty},\|u\|_{L^\infty})|u_n - u\|^2_{L^2(-a, a)}\quad\text{and} \\
  \frac 12 \partial_t \|v_n - v\|^2_{L^2(-a, a)} + \varepsilon \|\partial_x(v_n - v)\|^2_{L^2(-a, a)} \leq
 \varepsilon h^v(\|u_n\|_{L^\infty},\|u\|_{L^\infty})|v_n - v\|^2_{L^2(-a, a)},
\end{aligned}
\]
with some smooth functions $h^u$ and $h^v$ representing contributions from the higher-order terms $H^u$ and $H^v$. 
Adding both inequalities, using the uniform boundedness of $u_n$, $u$, $v_n$, and $v$, and applying the Gr\o nwall inequality, we obtain
\[
\begin{aligned} 
& \sup_{(0,T)}  \|u_n - u\|^2_{L^2(-a,a)} + \|\partial_x(u_n - u)\|^2_{L^2((0,T)\times(-a,a))} \\
 & +\sup_{(0,T)}\|v_n - v\|^2_{L^2(-a,a)} + \varepsilon \|\partial_x(v_n - v)\|^2_{L^2((0,T)\times (-a,a))} \\
& \qquad \leq C(T) \big[\|u_n(0)- u(0)\|^2_{L^2(-a,a)} + \|v_n(0)- v(0)\|^2_{L^2(-a,a)} \big],
\end{aligned} 
\]
which ensures the convergence of the Galerkin truncation to the solution of the original problem, Equation~\eqref{eq:initial_problem}, as the approximation of the initial data converges strongly in $L^2(-a,a)$.

\section{Illustrative example: $k_0 = 2$}%
\label{subsec:example-k0-2}

In order to develop intuition for the singular geometry and resulting dynamics of
\eqref{eq:galerkin_system_temp}, it is instructive to examine the simple case where $k_0 = 2$.
For simplicity, let $a = \frac{1}{2}$, and assume that the higher-order terms $H^u_i$ and $H^v_i$ for $i=1,2$
are identically zero. In that case, the system in \eqref{eq:galerkin_system_temp} reads
\begin{subequations}%
	\label{eq:galerkin_system_two}
	\begin{align}
		 u_1^\prime &= -v_1 + 2^{-1/2} u_1^2 + 2^{-1/2} u_2^2, \\
		 v_1^\prime &= -2^{-1/2} \varepsilon, \\
		 u_2^\prime &= -\pi^2 u_2 - v_2 + 2^{1/2} u_1 u_2, \\
		 v_2^\prime &= -\pi^2 \varepsilon v_2,
	\end{align}
\end{subequations}
where the critical manifold $\calC$ is given by the graph
\begin{subequations}%
	\label{eq:critical_manif_two}
	\begin{align}
		v_1 = f_1(u_1, u_2) &:= 2^{-1/2} u_1^2 + 2^{-1/2} u_2^2\quad\text{and} \\
		v_2 = f_2(u_1, u_2) &:= -\pi^2 u_2 + 2^{1/2} u_1 u_2.
	\end{align}
\end{subequations}
Linearisation of the layer problem induced by \eqref{eq:critical_manif_two} for $\varepsilon=0$ about $\calC$ reveals that one eigenvalue is always
negative for any choice of $(u_1, u_2)$, whereas the sign of the other eigenvalue depends on $(u_1, u_2)$, as shown
in~\autoref{fig:critical-manif-2}. The set $\calC_0$, as defined in~\eqref{eq:C_0-definition}, is denoted in red there. To the left of the curve
$u_1 = g(u_2) := \frac{1}{2} \big( \pi^2 - \sqrt{\pi^2 + 4 u_2^2} \big)$ (illustrated in blue), the second eigenvalue
is negative, whereas it is positive to the right of the curve. Normal hyperbolicity is lost on the curve itself. 

\begin{remark}
Similarly, one can visualise the stability properties of the critical manifold $\calC$ in the case
where $k_0 = 3$, which will be given as a graph over $(u_1, u_2, u_3)$;
see~\autoref{fig:critical-manifold-stability-3}. Specifically, the manifold $\calC$ is then
attracting inside the funnel-like region of $(u_1, u_2, u_3)$-space shown in the figure and of
saddle type outside that region. In analogy to the case of $k_0=2$, normal hyperbolicity is lost
on the surface separating those two regions which is now given by an implicit polynomial
expression that can be obtained by application of the Routh-Hurwitz stability criterion. The set $\calC_0$
is again drawn in red. 
\end{remark}

For the particular case when $k_0=2$, it is possible to find explicit formulae for the
initial conditions which will allow us to reach the section $\DeltaIn$ under the flow of~\eqref{eq:galerkin_system_two}. Firstly, $(u_1, u_2)$ must be in the region of the $(u_1,u_2)$-plane
that corresponds to the normally hyperbolic attracting part of the critical manifold $\calC$, see
\autoref{fig:critical-manif-2}. Secondly, by GSPT, we have to be sufficiently close to the
corresponding slow manifold $\calC_\varepsilon$ for $\varepsilon$ sufficiently small, which amounts to a
condition of the form
\begin{equation}%
	\label{eq:condition_v_u}
	\max\{|v_1 - f_1(u_1, u_2)|, |v_2 - f_2(u_1, u_2)|\} < C,
\end{equation}
where $C > 0$ is some suitably chosen constant. 
Thirdly, we also need to impose corresponding restrictions on $(v_1, v_2)$ to ensure that we will
not reach an unstable part of the critical manifold $\calC$ under the slow flow induced by \eqref{eq:galerkin_system_two}. To that end, we first
need to invert the line $u_1 = g(u_2)$, which separates the attracting and saddle-like parts of $\calC$
in the $(u_1,u_2)$-plane, by substituting into \eqref{eq:critical_manif_two} and solving for $v_1$
and $v_2$. The result is now a curve in the $(v_1,v_2)$-plane of the form $v_1 = h(v_2)$, where the
function $h$ is a quadratic polynomial in $v_2$, as shown in blue
in~\autoref{fig:critical-manif-v-2}.
A further, fourth, restriction is given by solving explicitly the $(v_1,v_2)$-subsystem
in~\eqref{eq:galerkin_system_two}, rewritten in terms of the slow time, and by then determining
a relation between $v_1$ and $v_2$ so that the flow reaches the section $\DeltaIn$:
\begin{equation}%
	\label{eq:condition_v1_v2_2} |v_2| \le \xi (v_1) := \Cin_{v_2}
	e^{\frac{\pi^2}{\sqrt{2}} \left( v_1 - \rho^2 \right)};
\end{equation} 
see \autoref{fig:critical-manif-v-2}, in purple, for an illustration.
Initial values for~\eqref{eq:galerkin_system_two} satisfying these four conditions will flow into
the section $\DeltaIn$. 

However, as the flow of \eqref{eq:galerkin_system_two} approaches the
singularity at the origin, for some initial conditions $u_2$ may blow up before $v_1$ becomes negative. The flow will hence have
reached an unstable part of the critical manifold $\calC$. In the next section, we provide an explicit explanation for this blowup in finite time. 

\begin{figure}
	\centering
	\begin{overpic}[scale=0.60]{./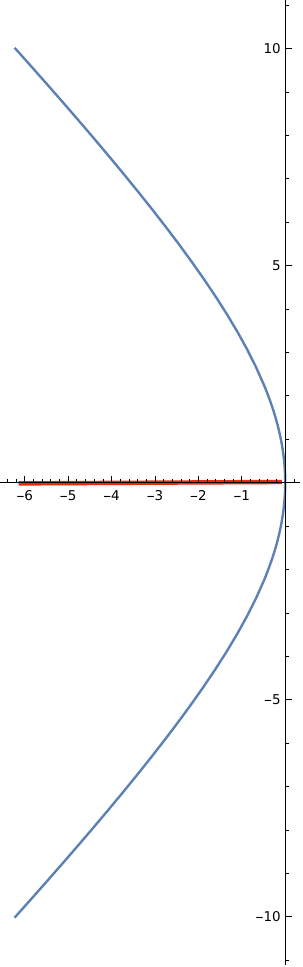}
		\put(32, 49){$u_1$}	
		\put(25, 100){$u_2$}
		\put(0, 60){stable}
		\put(15, 85){saddle}
		\put(5, 95){$u_1 = g(u_2)$}
		\put(0, 45){$\calC_0$}
		\put(15, 15){saddle}
	\end{overpic}
	\caption{Stability properties of the critical manifold $\calC$ which, for $k_0=2$, can be written as
	a graph over $(u_1, u_2)$. A loss of normal hyperbolicity occurs along the curve $u_1 = g(u_2) $ (in blue)
	where one of the two eigenvalues of the linearisation about $\calC$ changes sign. The manifold $\calC_0$ is shown in red.}%
	\label{fig:critical-manif-2}
\end{figure}
\begin{figure}
	\centering
	\begin{overpic}[scale=0.8]{./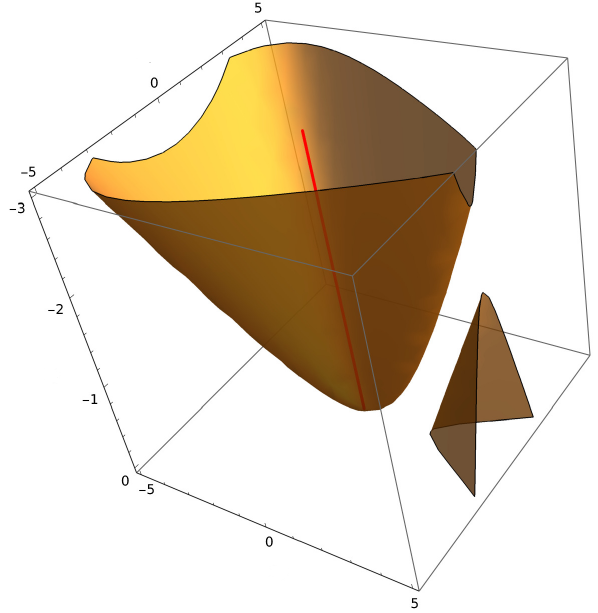}
		\put(0, 40){$u_1$}	
		\put(20, 90){$u_2$}	
		\put(40, 3){$u_3$}
	\end{overpic}

	\caption{The fold surface for $k_0 = 3$. The critical manifold $\calC$ can be written as a
	graph over $(u_1, u_2, u_3)$ and is stable inside the funnel-like region, the boundary of which
	is a surface that is implicitly defined by a polynomial equation in $u_1$, $u_2$, and $u_3$. One
	of the three eigenvalues of the linearisation about $\calC$ changes sign across the surface.}
	\label{fig:critical-manifold-stability-3}
\end{figure}
\begin{figure}
	\centering
	\begin{overpic}[scale=0.7, percent]{./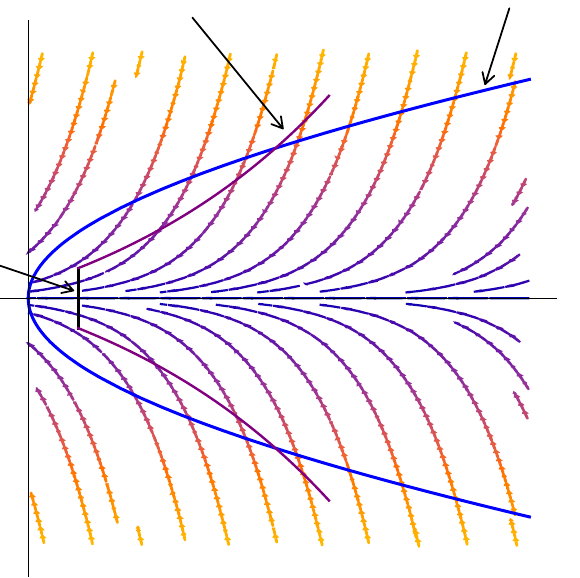}
		\put(0, 100){$v_2$}
		\put(100, 47){$v_1$}
		\put(-8, 53){$\DeltaIn$}
		\put(85, 100){$h(v_2)$}
		\put(29, 99){$\pm \xi(v_1)$}
	\end{overpic}
	\caption{The reduced flow of~\eqref{eq:galerkin_system_two}. The region inside the curve
		$h(v_2)$ (in blue) corresponds to the stable part of the critical manifold $\cal  C$; across that curve,
		one of the eigenvalues of the linearisation about $\calC$ changes sign. Also illustrated are
		$\DeltaIn$ (in black) and $\pm \xi(v_1)$ (in purple); recall~\eqref{eq:condition_v1_v2_2}. The set of
		initial conditions in the $(v_1,v_2)$-plane that reach $\DeltaIn$ is found in the intersection
		of the regions to the right of $h(v_2)$ and $\pm \xi(v_1)$.}%
	\label{fig:critical-manif-v-2}
\end{figure}

\section{Finite-time blowup of solutions}%
\label{subsec:finite-time-blowup}
To motivate the importance of restrictions on the initial data for the Galerkin truncation in~\eqref{eq:galerkin_system_original}, we prove that for some choices of initial conditions, a blowup in $u_1$ can occur before $v_1$ becomes negative already for $k_0=2$. Setting $a=\frac{1}{2}$ and rescaling $u_1$ and $u_2$ by a factor of $2^{-1/2}$, we
obtain the two-dimensional system
\begin{align}
	\begin{aligned}\label{Eq:2x2_Galerkin}
		u_1^\prime &= -v_1+u_1^2+u_2^2, & \quad\text{with } u_1(0)=u_1^0,\\
		v_1^\prime &= -\varepsilon, &\quad\text{with } v_1(0)=v_1^0,\\
		u_2^\prime &= -v_2+u_2(2u_1-\pi^2),\quad &\quad\text{with }u_2(0)=u_2^0,\\
		v_2^\prime &= -\varepsilon \pi^2 v_2,\quad &\quad\text{with }v_2(0)= v_2^0.
	\end{aligned}
\end{align}
It is assumed that $v_1^0>0$. We will show that, for $v_2^0\neq 0$ and $\varepsilon>0$ sufficiently
small, a finite-time blowup will occur in \eqref{Eq:2x2_Galerkin} before $v_1$ changes sign. For the
sake of simplicity and without loss of generality, we may assume that $u_2^0<0$ and $v_2^0>0$; see
also~\autoref{Rem:RestrictionInitialValues2}.
\subsection{Main observation}
Firstly, we establish our main observation on finite-time blowup for solutions of~\eqref{Eq:2x2_Galerkin} when $k_0=2$. Various auxiliary results which are used in the proof are collated in~\ref{app:blowup}.

\begin{prop}\label{Thm:Blowup_before_Blowup}
	Let $u_1^0\in\R$, $u_2^0<0$, and $v_1^0,v_2^0>0$. Then, there exists $\varepsilon>0$ such that the solution of \eqref{Eq:2x2_Galerkin} blows up before
	$t_0=\frac{v_1^0}{\varepsilon}$, i.e., before $v_1$ changes sign. 
\end{prop}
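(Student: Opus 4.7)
The plan is a proof by contradiction: assume the solution of \eqref{Eq:2x2_Galerkin} exists on the entire slow window $[0,t_0]$, with $t_0 = v_1^0/\varepsilon$, and derive a contradiction by showing that, for $\varepsilon$ sufficiently small, the fast modes $(u_1,u_2)$ must escape in finite time well before $t_0$. The leverage is that small $\varepsilon$ makes $t_0$ arbitrarily large, while the slow variables drift only mildly.

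First I would exploit the fact that the $v$-equations decouple from $(u_1,u_2)$ and integrate explicitly: $v_1(t) = v_1^0 - \varepsilon t$ and $v_2(t) = v_2^0 e^{-\pi^2 \varepsilon t}$. Hence on $[0, t_0]$ one has $v_1(t) \in [0, v_1^0]$ and $v_2(t) \in [c_2, v_2^0]$, where $c_2 := v_2^0 e^{-\pi^2 v_1^0} > 0$ is independent of $\varepsilon$. A barrier argument at $u_2 = 0$, where $u_2^\prime = -v_2 < 0$, shows that $u_2(t) < 0$ throughout the maximal existence interval; hence $y := -u_2 > 0$ is well defined and satisfies $y^\prime = v_2 + y(2u_1 - \pi^2)$.

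The core of the argument is a Riccati-type blowup for $u_1$, triggered once $y^2$ sufficiently dominates $v_1$. Since $u_1^\prime = -v_1 + u_1^2 + y^2 \geq u_1^2 + (y^2 - v_1^0)$, it suffices to secure $y^2 \geq v_1^0 + 1$ on an interval $[\tau^*, \tau^* + T]$ of length independent of $\varepsilon$; then $u_1^\prime \geq u_1^2 + 1$ there, and comparison with $z^\prime = z^2 + 1$ forces $u_1$ to blow up within additional time at most $\pi$. To establish the required floor for $y$, I would use that the $\varepsilon$-independent forcing $v_2 \geq c_2 > 0$ drives $y$ upward whenever $2u_1 - \pi^2$ is not too large and negative, bootstrapped with a Gronwall-type bound on the $u_1$-equation over the initial transient that exploits the smallness of $\varepsilon t$ (and hence of the drift of $v_1$) over any fixed compact time window. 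Choosing $\varepsilon$ small enough that $t_0$ exceeds the $\varepsilon$-independent blowup time so obtained then yields the contradiction.

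The main obstacle I anticipate is the case of very negative $u_1^0$ (or very small $|u_2^0|$): then $2u_1 - \pi^2$ is large and negative, which acts as damping in the $y$-equation and can prevent $y$ from growing quickly, while simultaneously $u_1$ itself is far from the runaway regime. Resolving this requires a two-phase analysis that first shows either $u_1$ drifts up into the regime where $u_1^2$ alone overcomes $v_1$, or $y$ settles near the quasi-equilibrium $v_2/|2u_1 - \pi^2|$ of its linearised equation, which is uniformly bounded below by a positive constant depending only on the data; only then does the Riccati blowup take over. The appendix observations invoked in the statement are presumably the uniform sign-preservation and comparison estimates needed to carry out this bootstrap cleanly.
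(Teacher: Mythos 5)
Your overall architecture is close to the paper's: use the explicit $(v_1,v_2)$ solutions, show $u_2 < 0$ is preserved, bound $y = -u_2$ from below, and trigger a Riccati blowup in $u_1$ once $y^2$ dominates $v_1$. But there is a genuine gap in the target you set for $y$. You bound $-v_1 \geq -v_1^0$ once and for all and then require $y^2 \geq v_1^0 + 1$. That floor is not attainable for general data: since $u_1$ must remain confined to a bounded interval (as you also note, via the case analysis and the paper's Propositions~\ref{Prop:u_1^0_below} and~\ref{Prop:u_1^0_above}), the quasi-equilibrium of $y' = v_2 + y(2u_1 - \pi^2)$ is $v_2/(\pi^2 - 2u_1) \le v_2^0/(\pi^2 - 2u_1)$, an $\varepsilon$-independent quantity fixed by the data, and nothing forces it to exceed $\sqrt{v_1^0 + 1}$. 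The forcing $v_2 \ge c_2$ pushes $y$ toward this ceiling, not past it. So phase two of your bootstrap ($y$ settles near its quasi-equilibrium) does not deliver the floor you require, and the contradiction does not close.

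The key idea you are missing is to treat the long slow window as a resource rather than a handicap. Rather than freezing $v_1$ at its initial value, let $v_1(t) = v_1^0 - \varepsilon t$ drift all the way down to an $\eta > 0$ that you are free to choose small \emph{independently of} $\varepsilon$; this costs time $(v_1^0 - \eta)/\varepsilon$, which is affordable because $t_0 = v_1^0/\varepsilon$. Meanwhile $v_2$ only decays by the $\varepsilon$-independent factor $e^{-\pi^2(v_1^0-\eta)}$, so the quasi-equilibrium of $y$ remains of order $v_2^0/(\pi+\pi^2)$. The Riccati trigger then reduces to $y^2 > \eta$, which can be met by choosing $\eta$ small enough (Proposition~\ref{Prop:RelationInitialValues} and Lemma~\ref{lemma:eta} make this precise). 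Once $\eta$ is fixed, the remaining time window $[\,(v_1^0-\eta)/\varepsilon,\, t_0\,]$ has length $\eta/\varepsilon$, which is made arbitrarily long as $\varepsilon \to 0$, so the $\varepsilon$-independent Riccati blowup time fits inside it. Without this drift-to-small-$v_1$ step, your comparison fails except for special data.
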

\begin{proof}
	As observed in Proposition~\ref{Prop:RelationInitialValues} and Remark~\ref{Rem:RestrictionInitialValues2} below, without loss of generality, we may assume that
    \[-\pi/2< u_1^0 \leq \pi/4 \quad \text{and} \quad
	v_1^0<\min\left\{\tfrac{\pi^2}{16},\left(\tfrac{e^{-\pi^4/32}v_2^0}{2(\pi+\pi^2)}\right)^2\right\}.
    \]
	By Propositions~\ref{Prop:u_1^0_below} and~\ref{Prop:u_1^0_above}, it follows that $-\pi/2< u_1(t) \leq \pi/4$ for all $t\geq0$ unless there is blowup in finite
	time independent of $\varepsilon>0$. We consider the time interval
	$[0,\frac{v_1^0}{2\varepsilon}]$ in which $v_1$ remains positive. Moreover, we have
	$v_2(t)\in\big[\exp\big(-\tfrac{\pi^2v_1^0}{2}\big)v_2^0,v_2^0\big]$ for all
	$[0,\frac{v_1^0}{2\varepsilon}]$. Since $u_2(t)\leq0$, by
	Remark~\ref{Rem:RestrictionInitialValues2}, and since $-\pi/2< u_1(t) \leq \pi/4$ for all
	$t\geq0$, we find 
	\[
		-2v_2^0-(\pi^2-\tfrac{\pi}{2})u_2 < -v_2+u_2(2u_1-\pi^2) = \partial_tu_2  < -\exp(-\frac{\pi^2v_1^0}{2})v_2^0 -(\pi+\pi^2)u_2
	\]
	in $[0,\frac{v_1^0}{2\varepsilon}]$. Let now $w_u$ and $w_o$ be the solutions of
	\begin{align*}
		 w^\prime_u &= -2v_2^0-(\pi^2-\tfrac{\pi}{2})w_u,\\
		 w^\prime_o &= -\exp\left(-\frac{\pi^2v_1^0}{2}\right)v_2^0 -(\pi+\pi^2)w_o
	\end{align*}
	in $[0,\frac{v_1^0}{2\varepsilon}]$, with $w_u(0)=u_2^0=w_o(0)$.   Lemma~\ref{Lemma:Monotonicity} ensures $w_u \leq u_2 \leq w_o$. 
	Thus, in $[\frac{v_1^0}{4\varepsilon},\frac{v_1^0}{2\varepsilon}]$, we have  
	\begin{align*}
		u_2(t)&\leq w_o(t)=\exp\left(-(\pi+\pi^2)t\right)\left[u_2^0+\frac{1}{(\pi+\pi^2)}\exp\left(-\frac{\pi^2v_1^0}{2}\right)v_2^0\right]
			\\
   & \qquad \qquad  -\frac{1}{(\pi+\pi^2)}\exp\left(-\frac{\pi^2v_1^0}{2}\right)v_2^0\\
			&\leq -\frac{1}{2(\pi+\pi^2)}\exp\left(-\frac{\pi^2v_1^0}{2}\right)v_2^0,
	\end{align*}
	provided $\varepsilon>0$ is sufficiently small. Correspondingly, in
	$\left[\frac{v_1^0}{4\varepsilon},\frac{v_1^0}{2\varepsilon}\right]$ we obtain 
	\begin{equation}%
	\label{Eq:Blowup_argument}
		\begin{aligned} 
			u_1^\prime &= -v_1+ u_1^2+u_2^2\geq
			-v_1^0+\frac{e^{-\pi^2v_1^0}}{4(\pi+\pi^2)^2}{\left(v_2^0 \right)}^2+u_1^2\\
   &\geq -  v_1^0
			+\frac{e^{-\pi^4/16}}{4(\pi+\pi^2)^2}{\left(v_2^0 \right)}^2+u_1^2 > c +u_1^2
		\end{aligned}
	\end{equation}
	for some $c > 0$ due to $v_1^0<\min\left\{\tfrac{\pi^2}{16},\left(\tfrac{e^{-\pi^4/32}v_2^0}{2(\pi+\pi^2)}\right)^2\right\}$. The equation
	\[
		w^\prime = \mu + w^2,
	\]
	with $\mu>0$ constant, experiences blowup for any initial condition at a time $t_0$ that depends on the initial
	condition and on $\mu$, but that is independent of $\varepsilon$. If $\varepsilon>0$ is small enough, then
	the blowup occurs in $\left[0,\frac{v_1^0}{4\varepsilon}\right]$. Thus,
	Lemma~\ref{Lemma:Monotonicity} implies that $u_1$ blows up before time
	$\frac{v_1^0}{2\varepsilon}$; in particular, it blows up before $v_1$ changes sign.
\end{proof}


\subsection{Proof of Proposition~\ref{Thm:Blowup_before_Blowup}}
\label{app:blowup}
The following comparison principle is standard; however, we include it for completeness.  

\begin{lemma}\label{Lemma:Monotonicity}
	Let $f,g\colon[0,\infty)\times \R\to \R$ be such that $f(t,x)> g(t,x)$ for all
	$(t,x)\in[0,\infty)\times \R$, and suppose that $f$ and $g$ are locally Lipschitz continuous.
	Furthermore, let $x_0\in\R$, and let $y_f$ and $y_g$ be the solutions of
	\[
		 y^\prime_f(t) = f(t,y_f(t))\quad\text{and}\quad y^\prime_g(t) = g(t,y_g(t)),
		 	\quad\text{with } y_f(0) = y_g(0).
	\]
	Then, $y_f(t) \geq y_g(t)$ for all $t$ in the intersection of the maximal
	existence intervals of $y_f$ and $y_g$.
\end{lemma}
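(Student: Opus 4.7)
The plan is to prove this comparison-type result by a standard contradiction argument exploiting the strict pointwise bound $f(t,x) > g(t,x)$. First I would introduce the difference $h(t) := y_f(t) - y_g(t)$, observe that $h(0) = 0$, and compute $h'(0) = f(0,x_0) - g(0,x_0) > 0$. Continuity of $h$ and $h'$, which follows from the local Lipschitz hypothesis (ensuring $y_f, y_g \in C^1$ on their existence intervals), then yields some $\delta > 0$ with $h(t) > 0$ on $(0,\delta)$.

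Next I would assume for contradiction that there exists $t^\ast$ in the intersection of the maximal existence intervals with $h(t^\ast) < 0$, and set
\[
	t_1 := \inf\{\, t > 0 : h(t) < 0 \,\}.
\]
By the first paragraph $t_1 \geq \delta > 0$, and by continuity $h(t_1) = 0$, i.e., $y_f(t_1) = y_g(t_1)$. Evaluating the derivative at this point yields
\[
	h'(t_1) = f(t_1, y_f(t_1)) - g(t_1, y_g(t_1)) = f(t_1, y_f(t_1)) - g(t_1, y_f(t_1)) > 0
\]
by the strict hypothesis. On the other hand, the definition of $t_1$ as an infimum produces a sequence $t_n \downarrow t_1$ with $h(t_n) < 0 = h(t_1)$, which forces $h'(t_1) \leq 0$. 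This contradiction establishes $y_f(t) \geq y_g(t)$ throughout the common existence interval.

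The only mild subtlety is making the infimum argument rigorous: nothing prevents $y_f$ and $y_g$ from coming back into contact at isolated later instants, but since the conclusion only asks for $\geq$ rather than strict inequality, this causes no difficulty. The local Lipschitz assumption is used only to guarantee uniqueness of $y_f$ and $y_g$ and enough regularity to justify the derivative computation at $t_1$; consequently the proof is short and elementary, and its role in Proposition~\ref{Thm:Blowup_before_Blowup} is simply to sandwich $u_2$ between the explicit linear solutions $w_u$ and $w_o$ and, afterwards, to compare $u_1$ with the blowing-up Riccati-type comparison equation $w' = c + w^2$.
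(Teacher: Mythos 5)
Your proof is correct and takes essentially the same approach as the paper: both arguments exploit the fact that at any point where $y_f$ and $y_g$ touch, the strict inequality $f > g$ forces $y_f - y_g$ to become strictly positive immediately afterward, and then a continuity/first-crossing argument rules out $y_f$ ever dropping below $y_g$. Your version simply makes the infimum-and-contradiction step more explicit than the paper's brief appeal to the intermediate value theorem.
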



\begin{prop}\label{Prop:RelationInitialValues}
	If the solution of \eqref{Eq:2x2_Galerkin} exists for a sufficiently long time, then there exists $\eta>0$, independent of $\varepsilon$, but dependent on $v_1^0$ and $v_2^0$, such
	that
	\[
		0<v_1\big(\tfrac{v_1^0-\eta}{\varepsilon}\big)<\min\left\{\tfrac{\pi^2}{16},\tfrac{e^{-\pi^4/16}}{4(\pi+\pi^2)^2}
			\left[v_2\big(\tfrac{v_1^0-\eta}{\varepsilon}\big)\right]^2\right\}.
	\]
\end{prop}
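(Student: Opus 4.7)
The plan is to exploit the fact that in system~\eqref{Eq:2x2_Galerkin} the slow variables $(v_1,v_2)$ decouple completely from the fast variables $(u_1,u_2)$, so they can be solved explicitly. Specifically, integrating the equations for $v_1$ and $v_2$ gives
\[
v_1(t) = v_1^0 - \varepsilon t, \qquad v_2(t) = v_2^0\, e^{-\varepsilon\pi^2 t}.
\]
Evaluating at the prescribed time $t_{\eta} := (v_1^0-\eta)/\varepsilon$ (which makes sense, and is an instant at which the solution exists, precisely by the hypothesis that the solution persists long enough) yields
\[
v_1(t_{\eta}) = \eta, \qquad v_2(t_{\eta}) = v_2^0\, e^{-\pi^2(v_1^0-\eta)}.
\]
Note that the right-hand sides depend on $v_1^0,v_2^0$ and $\eta$, but not on $\varepsilon$.

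Next, I would rewrite the desired chain of inequalities in terms of $\eta$ alone. The condition $0<v_1(t_\eta)$ is just $\eta>0$, and $v_1(t_\eta)<\pi^2/16$ is just $\eta<\pi^2/16$. The remaining bound becomes
\[
\eta \;<\; \frac{e^{-\pi^4/16}}{4(\pi+\pi^2)^2}\,(v_2^0)^2\, e^{-2\pi^2(v_1^0-\eta)} \;=:\; \Phi(\eta).
\]
The function $\Phi$ is continuous and strictly positive on $[0,\infty)$, with
\[
\Phi(0) = \frac{e^{-\pi^4/16}}{4(\pi+\pi^2)^2}\,(v_2^0)^2\, e^{-2\pi^2 v_1^0} > 0.
\]
Moreover $\Phi$ is increasing in $\eta$, so the inequality $\eta<\Phi(\eta)$ holds for all sufficiently small $\eta>0$ (it suffices, for instance, to take $\eta < \tfrac{1}{2}\Phi(0)$, which is an $\varepsilon$-independent quantity).

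Combining the three constraints, choosing
\[
\eta = \tfrac{1}{2}\min\!\left\{\tfrac{\pi^2}{16},\; \tfrac{e^{-\pi^4/16}}{4(\pi+\pi^2)^2}\,(v_2^0)^2\, e^{-2\pi^2 v_1^0}\right\}
\]
produces an $\eta>0$ depending only on $v_1^0$ and $v_2^0$ which satisfies all required inequalities. The argument is essentially a direct calculation; the only subtlety is the implicit hypothesis that the $(u_1,u_2)$-flow is still defined at the instant $t_\eta$, which is covered by the phrase ``sufficiently long time'' in the statement. There is no serious obstacle, since monotonicity of $\Phi$ makes the implicit inequality $\eta<\Phi(\eta)$ trivial to solve for small $\eta$.
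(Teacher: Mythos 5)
Your proposal is correct and follows essentially the same route as the paper: solve the linear $(v_1,v_2)$-subsystem explicitly, observe that $v_1(t_\eta)=\eta$ and $v_2(t_\eta)$ are $\varepsilon$-independent, and note that the resulting inequality in $\eta$ holds for $\eta$ small. The paper likewise reduces to this inequality and invokes "for $\eta$ sufficiently small"; the explicit choice of $\eta$ (essentially your $\Phi(0)$) is recorded separately in its Lemma~\ref{lemma:eta}.
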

\begin{proof}
	Solving explicitly, we can write $v_1(t)=v_1^0-\varepsilon t$ and $v_2(t)=\exp(-\varepsilon\pi^2t)v_2^0$. Hence,
	\[
		v_1\big(\tfrac{v_1^0-\eta}{\varepsilon}\big)=\eta>0.
	\]
	On the other hand, for $\eta>0$ sufficiently small, we have
	\[
		v_1\big(\tfrac{v_1^0-\eta}{\varepsilon}\big)=\eta<
		\tfrac{e^{-\pi^4/16}}{4(\pi+\pi^2)^2}\exp\big[-2\pi^2(v_1^0-\eta)\big](v_2^0)^2
		=\tfrac{e^{-\pi^4/16}}{4(\pi+\pi^2)^2}\left[v_2\big(\tfrac{v_1^0-\eta}{\varepsilon}\big)\right]^2.
	\]
	Obviously, if $\eta>0$ is small enough, it also holds that
	\[
		v_1\big(\tfrac{v_1^0-\eta}{\varepsilon}\big)=\eta<\tfrac{\pi^2}{16},
	\]
	which shows the assertion.
\end{proof}

 Given Proposition~\ref{Prop:RelationInitialValues}, blowup in \eqref{Eq:2x2_Galerkin} can still
 occur in a time interval of length $\eta/\varepsilon$. Since $\eta$ can be chosen independent of
 $\varepsilon$, that interval can be made arbitrarily large for $\varepsilon$ sufficiently small. In
 particular, if we can now show that solutions of \eqref{Eq:2x2_Galerkin} blow up after a time which
 is independent of $\varepsilon$, then blowup will occur before $v_1$ changes sign if
 $\varepsilon>0$ is small enough. By Proposition~\ref{Prop:RelationInitialValues}, we may assume
 that
 $v_1^0<\min\Big\{\tfrac{\pi^2}{16},\Big[\tfrac{e^{-\pi^4/32}v_2^0}{2(\pi+\pi^2)}\Big]^2\Big\}$.

\begin{prop}\label{Prop:u_1^0_below}
	If the solution of \eqref{Eq:2x2_Galerkin} exists for a sufficiently long time, then there
	exists a time $t_0\geq0 $, independent of $\varepsilon$, such that $u_1(t_0)> -\pi/2$.
\end{prop}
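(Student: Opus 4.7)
The plan is to deduce the claim from a simple \emph{a priori} differential inequality on $u_1$: whenever $u_1(t)\le -\pi/2$, the right-hand side of the $u_1$-equation is bounded below by a strictly positive constant that does \emph{not} depend on $\varepsilon$, so $u_1$ cannot linger below $-\pi/2$ for longer than a time controlled solely by $u_1^0$.

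First I would use that $v_1$ is solved explicitly by $v_1(t)=v_1^0-\varepsilon t$, which is monotonically decreasing in $t$. Combined with the reduction assumption
$v_1^0 < \pi^2/16$ (which the paragraph preceding Proposition~\ref{Prop:u_1^0_below} grants via Proposition~\ref{Prop:RelationInitialValues}), this gives $v_1(t)\le v_1^0 < \pi^2/16$ for every $t\ge 0$. Therefore, at any time $t$ for which $u_1(t)\le -\pi/2$, we have $u_1(t)^2\ge \pi^2/4$, while $u_2(t)^2\ge 0$ trivially, so the $u_1$-equation in \eqref{Eq:2x2_Galerkin} yields
\[
u_1'(t) \;=\; -v_1(t)+u_1(t)^2+u_2(t)^2 \;\ge\; -\tfrac{\pi^2}{16}+\tfrac{\pi^2}{4} \;=\; \tfrac{3\pi^2}{16}.
\]
The crucial feature is that this lower bound is independent of $\varepsilon$ and of $u_2$.

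The conclusion is then immediate by integration. If $u_1^0>-\pi/2$, take $t_0=0$. Otherwise, set
\[
T \;:=\; \tfrac{16}{3\pi^2}\bigl(-\tfrac{\pi}{2}-u_1^0\bigr)+1,
\]
which depends only on $u_1^0$. Suppose, toward a contradiction, that $u_1(t)\le -\pi/2$ for all $t\in[0,T]$; the lemma hypothesis that solutions exist for a sufficiently long time ensures $[0,T]$ lies in the existence interval once $\varepsilon$ is small enough. Then the pointwise bound $u_1'\ge 3\pi^2/16$ holds throughout $[0,T]$, and integration gives $u_1(T)\ge u_1^0+\tfrac{3\pi^2}{16}T>-\pi/2$, contradicting the assumption. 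Hence there exists some $t_0\in[0,T]$ with $u_1(t_0)>-\pi/2$, and $T$ (hence $t_0$) is independent of $\varepsilon$.

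There is essentially no serious obstacle here; the argument is a one-line differential inequality combined with the explicit form of $v_1$. The only point to be careful about is that the estimate must be closed before $v_1$ changes sign, i.e.~on a time interval of length $\mathcal O(1)$ rather than $\mathcal O(1/\varepsilon)$, and this is precisely what the above choice of $T$ guarantees, since $T$ does not scale with $\varepsilon^{-1}$. The explicit bound $v_1^0<\pi^2/16$ secured by Proposition~\ref{Prop:RelationInitialValues} is exactly what one needs to beat the negative $-v_1$ contribution by the $u_1^2$ term at the threshold $u_1=-\pi/2$.
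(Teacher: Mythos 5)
Your proof is correct and follows essentially the same argument as the paper: bound $v_1(t)\le v_1^0<\pi^2/16$ (using that $v_1$ is decreasing), observe that whenever $u_1\le -\pi/2$ the right-hand side of the $u_1$-equation is at least $3\pi^2/16$, and conclude that $u_1$ must exceed $-\pi/2$ within a time bounded independently of $\varepsilon$. The only difference is that you spell out the final integration step explicitly, whereas the paper leaves it implicit.
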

\begin{proof}
	Since we can assume $v_1^0<\tfrac{\pi^2}{16}$, it holds that
	\[
		u^\prime_1= -v_1+u_1^2+u_2^2> -\tfrac{\pi^2}{16} +u_1^2.
	\]
	As long as $u_1^2\leq -\pi/2$, we also have $-\tfrac{\pi^2}{16} +u_1^2>\frac{3\pi^2}{16}$ and,
	hence, $u^\prime_1>\frac{3\pi^2}{16}$, which proves the assertion.
\end{proof}

\begin{prop}\label{Prop:u_1^0_above}
	If $u_1^0>\pi/4$, then solutions of \eqref{Eq:2x2_Galerkin} blow up after a finite time which is independent of $\varepsilon$.
\end{prop}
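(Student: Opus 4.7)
The plan is to reduce the four-dimensional system to a scalar differential inequality for $u_1$, and then to compare $u_1$ with a Riccati-type scalar ODE of the form $w' = w^2 - c$ that is known to blow up in finite time whenever $w(0) > \sqrt{c}$. Crucially, both the constant $c$ and the blowup time will be independent of $\varepsilon$.

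By Proposition~\ref{Prop:RelationInitialValues}, we may assume $v_1^0 < \pi^2/16$. Since $v_1'=-\varepsilon<0$, we have $v_1(t)\leq v_1^0<\pi^2/16$ for all $t\geq 0$, and therefore
\[
u_1'(t)=-v_1(t)+u_1(t)^2+u_2(t)^2\;>\;-\tfrac{\pi^2}{16}+u_1(t)^2,
\]
where we also used $u_2(t)^2\geq 0$. Define $f(t,x):=-v_1(t)+x^2+u_2(t)^2$ and $g(x):=x^2-\pi^2/16$; the observation above says $f(t,x)>g(x)$ for every $(t,x)$. Let $w$ be the solution of $w'=w^2-\pi^2/16$ with $w(0)=u_1^0$. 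By Lemma~\ref{Lemma:Monotonicity}, $u_1(t)\geq w(t)$ on the common existence interval, so it suffices to show that $w$ blows up at a finite time which depends only on $u_1^0$.

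Since $u_1^0>\pi/4$ we have $w'(0)>0$, and the scalar equation $w'=w^2-\pi^2/16$ with initial value strictly above $\pi/4$ is strictly increasing. Separation of variables and a partial-fraction decomposition give
\[
T^{\ast}=\int_{u_1^0}^{\infty}\frac{dw}{w^2-\pi^2/16}=\frac{2}{\pi}\ln\frac{u_1^0+\pi/4}{u_1^0-\pi/4}<\infty,
\]
which is finite and independent of $\varepsilon$. Hence $w$, and therefore $u_1$, must blow up no later than $T^{\ast}$.

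The main (minor) obstacle is justifying the use of Lemma~\ref{Lemma:Monotonicity} here, as the right-hand side of the $u_1$-equation depends on time via $v_1$ and $u_2$, not just on the value of $u_1$. This is handled by freezing the functions $v_1(\cdot)$ and $u_2(\cdot)$ along the actual trajectory of \eqref{Eq:2x2_Galerkin} and treating the resulting $f(t,x)$ as a time-dependent field, which fits the hypotheses of Lemma~\ref{Lemma:Monotonicity} directly. No growth or sign control on $u_2$ is required: only the inequality $u_2^2\geq 0$ is used.
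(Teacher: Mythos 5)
Your proof is correct and follows essentially the same route as the paper's: bound $v_1(t) < \pi^2/16$ using monotonicity, drop the nonnegative term $u_2^2$, and compare $u_1$ with the scalar Riccati equation $w' = w^2 - \pi^2/16$ via Lemma~\ref{Lemma:Monotonicity}. The explicit computation of the blowup time $T^\ast$ and the remark about the time-dependent right-hand side are welcome additional detail, but the argument is the same one the paper uses.
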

\begin{proof}
	Since we may assume $v_1^0<\tfrac{\pi^2}{16}$, it holds that 
	\[
		 u^\prime_1= -v_1+u_1^2+u_2^2>-\tfrac{\pi^2}{16} +u_1^2.
	\]
	If $u_1^0>\pi/4$, then the right-hand side in the above expression is positive. It follows from
	\autoref{Lemma:Monotonicity} that blowup occurs after a finite time which is independent of
	$\varepsilon$, as that is the case for the solution of 
	\[
		w^\prime = -\tfrac{\pi^2}{16} +w^2,\quad\text{with } w(0)=u_1^0>\pi/4.
	\]
\end{proof}

\begin{remark}
	\label{Rem:RestrictionInitialValues2}%
	\begin{enumerate}
	\item Propositions~\ref{Prop:u_1^0_below} and~\ref{Prop:u_1^0_above} imply that we may assume $-\pi/2< u_1^0 \leq \pi/4$.
	\item One can also show the following: if solutions to \eqref{Eq:2x2_Galerkin} exist for a long enough time
		and if $\varepsilon>0$ is sufficiently small, then there exists $t_0\geq0$,
		independent of $\varepsilon$, such that $u_2(t)\leq 0$ for all $t\geq t_0$. 
		We simply take $u_2^0<0$ and observe that, hence, $u_2(t)\leq0 $ for all $t\geq0$.
		Note, however, that one has to exchange signs here if $v_2^0<0$.
	\end{enumerate}
\end{remark}
%
We
now  derive an estimate for how small $\varepsilon$ has to be such that we observe blowup
before $v_1$ changes sign. In a first step, we give an explicit expression for $\eta$ -- dependent on $v_1^0$ and $v_2^0$, but independent of  $\varepsilon$ -- that satisfies the estimate in
Proposition~\ref{Prop:RelationInitialValues}. For the sake of simplicity, we will assume that
$v_1^0\in(0,\pi^2/16)$.

\begin{lemma}\label{lemma:eta}
If $\eta$ is chosen as 
\[
\eta= \frac{(v_2^0)^2}{4(\pi+\pi^2)^2}\exp\Big(-\frac{\pi^4}{16}-2\pi^2v_1^0\Big),
\]
then the estimate in Proposition~\ref{Prop:RelationInitialValues} is satisfied.
\end{lemma}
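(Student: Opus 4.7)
The plan is to substitute the explicit solutions $v_1(t)=v_1^0-\varepsilon t$ and $v_2(t)=e^{-\varepsilon\pi^2 t}v_2^0$ (already recorded in the proof of Proposition~\ref{Prop:RelationInitialValues}) into the displayed estimate of that proposition at the distinguished time $t^\ast:=(v_1^0-\eta)/\varepsilon$, and then verify the resulting algebraic inequalities by direct computation. At $t^\ast$ one obtains $v_1(t^\ast)=\eta$ and $v_2(t^\ast)=e^{-\pi^2(v_1^0-\eta)}v_2^0$, so the whole statement reduces to checking three explicit inequalities for the proposed $\eta$.

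Positivity $v_1(t^\ast)=\eta>0$ is immediate from the closed form of $\eta$, since $(v_2^0)^2>0$ and the exponential factor is strictly positive. For the nontrivial upper bound, the key observation is a one-line algebraic identity: substituting the formula into the right-hand side of the second inequality gives
\[
\frac{e^{-\pi^4/16}}{4(\pi+\pi^2)^2}\bigl[v_2(t^\ast)\bigr]^2
=\frac{(v_2^0)^2 e^{-\pi^4/16}}{4(\pi+\pi^2)^2}\,e^{-2\pi^2(v_1^0-\eta)}
=\eta\,e^{2\pi^2\eta}.
\]
The required estimate then collapses to $\eta<\eta e^{2\pi^2\eta}$, which holds since $\eta>0$. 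In effect, the formula for $\eta$ is obtained by solving the corresponding equation after dropping the factor $e^{2\pi^2\eta}$, and that very factor provides exactly the slack needed to turn the identity into a strict inequality.

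The remaining bound $\eta<\pi^2/16$ is addressed using the smallness hypotheses already imposed in Proposition~\ref{Thm:Blowup_before_Blowup}: the factor $e^{-2\pi^2 v_1^0}$ is bounded by $1$, and the prefactor $(v_2^0)^2 e^{-\pi^4/16}/(4(\pi+\pi^2)^2)$ is controlled by the standing assumptions on the initial data (in the regime where the hypothesis $v_1^0<(e^{-\pi^4/32}v_2^0/(2(\pi+\pi^2)))^2$ of Proposition~\ref{Thm:Blowup_before_Blowup} is non-vacuous, these quantities lie in a bounded window in which $\eta$ is automatically smaller than $\pi^2/16$). The main conceptual obstacle is recognising the algebraic identity highlighted in the middle paragraph; once it is in hand, the verification is a short, direct calculation with no further dynamical input required.
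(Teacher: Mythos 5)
Your core argument is the same as the paper's, presented in reciprocal form: the paper multiplies the target inequality by $e^{-2\pi^2\eta}$ and observes that the resulting right-hand side is exactly the proposed $\eta$, whereas you substitute $\eta$ and show the right-hand side equals $\eta\, e^{2\pi^2\eta}$; either way, the inequality reduces to $e^{2\pi^2\eta}>1$. One small caveat: your parenthetical claim that the hypothesis $v_1^0<\bigl(e^{-\pi^4/32}v_2^0/(2(\pi+\pi^2))\bigr)^2$ keeps the prefactor $(v_2^0)^2 e^{-\pi^4/16}/(4(\pi+\pi^2)^2)$ in a bounded window is not right — that hypothesis bounds this prefactor from \emph{below} (it says $v_1^0$ is less than it), not from above, so it cannot deliver $\eta<\pi^2/16$ by itself — but the paper's own proof simply omits this check (and the subsequent remark acknowledges the attendant subtleties), so this does not put you behind the published argument on the step that actually matters.
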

\begin{proof}
	The estimate in Proposition~\ref{Prop:RelationInitialValues} holds true if and only if
	\begin{align*}
		\eta &=v_1^0-\varepsilon \frac{v_1^0-\eta}{\varepsilon}=
		v_1\Big(\frac{v_1^0-\eta}{\varepsilon}\Big)<\tfrac{e^{-\pi^4/16}}{4(\pi+\pi^2)^2}\Big[v_2\big(\tfrac{v_1^0-\eta}{\varepsilon}\big)\Big]^2\\
		&=\tfrac{e^{-\pi^4/16}}{4(\pi+\pi^2)^2} (v_2^0)^2\exp\Big(-2\varepsilon
		\pi^2\frac{v_1^0-\eta}{\varepsilon} \Big)=\tfrac{e^{-\pi^4/16}}{4(\pi+\pi^2)^2}
		(v_2^0)^2\exp\big[-2 \pi^2(v_1^0-\eta) \big].
	\end{align*}
	Multiplication by $e^{-2\pi^2\eta}$ yields
	\[
		\eta\exp(-2\pi^2\eta) < \tfrac{e^{-\pi^4/16}}{4(\pi+\pi^2)^2} (v_2^0)^2\exp(-2 \pi^2v_1^0),
	\]
	which is satisfied if
	\[
		\eta= \frac{(v_2^0)^2}{4(\pi+\pi^2)^2}\exp\big(-\tfrac{\pi^4}{16}-2\pi^2v_1^0\big),
	\]
	as stated in the assertion.
\end{proof}

\begin{remark}\label{Rem:Blowup_Time}
The main argument in the proof of Proposition~\ref{Thm:Blowup_before_Blowup} was that solutions of $w' = \mu + w^2$ blow up in finite time if $\mu>0$. The explicit solution is given by
\[
 w(t)=\sqrt{\mu}\tan\Big(\arctan(\tfrac{w(0)}{\sqrt{\mu}})+\sqrt{\mu}t\Big),
\]
and hence exists until time 
\[
 t=\frac{\pi/2-\arctan(\tfrac{w(0)}{\sqrt{\mu}})}{\sqrt{\mu}}.
\]
In particular, blowup occurs before time $t=\pi/\sqrt{\mu}$. To determine how to choose $\mu$ in
Proposition~\ref{Thm:Blowup_before_Blowup}, we recall Equation~\eqref{Eq:Blowup_argument}, which
allows for
\[
	\mu=-v_1^0+\frac{e^{-\pi^4/16}}{4(\pi+\pi^2)^2}(v_2^0)^2=(e^{2\pi^2\eta}-1)\eta;
\]
here, we have used Lemma~\ref{lemma:eta}.
\end{remark}

\begin{prop}\label{Prop:Choice_epsilon}
	If $\varepsilon<\frac{\eta^2}{2\sqrt{2}}$, then the solution of \eqref{Eq:2x2_Galerkin} blows up
	before $v_1$ changes sign.
\end{prop}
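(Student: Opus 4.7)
The plan is to make explicit the smallness condition on $\varepsilon$ that was left implicit in the proof of Proposition~\ref{Thm:Blowup_before_Blowup}. First, using the time-shift argument of Proposition~\ref{Prop:RelationInitialValues} together with the explicit $\eta$ supplied by Lemma~\ref{lemma:eta}, I would reset the ``initial time'' to $t_0 = (v_1^0 - \eta)/\varepsilon$. As explained in the remark following Lemma~\ref{lemma:eta}, this allows us to assume without loss of generality that $v_1^0 = \eta$ and $v_2^0 = \sqrt{\eta}\,e^{\pi^2\eta}\cdot 2(\pi+\pi^2)e^{\pi^4/32}$. After this reset the remaining time until $v_1$ changes sign is exactly $\eta/\varepsilon$, so it suffices to exhibit a blowup of $u_1$ within that window.

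Following the argument of Proposition~\ref{Thm:Blowup_before_Blowup}, on the sub-interval $[\eta/(4\varepsilon),\,\eta/(2\varepsilon)]$ the differential inequality $u_1' \geq \mu + u_1^2$ holds, with
\[
\mu \;=\; (e^{2\pi^2\eta}-1)\,\eta
\]
as recorded in Remark~\ref{Rem:Blowup_Time}. By Lemma~\ref{Lemma:Monotonicity}, $u_1$ is bounded below by the solution of the comparison Riccati $w' = \mu + w^2$ launched at time $\eta/(4\varepsilon)$ with value $u_1(\eta/(4\varepsilon))$, and Remark~\ref{Rem:Blowup_Time} guarantees that this comparison solution blows up within an additional time of at most $\pi/\sqrt{\mu}$, independently of the launch value.

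Consequently, $u_1$ blows up strictly before time $\eta/(2\varepsilon)$ -- hence strictly before $v_1$ vanishes at time $\eta/\varepsilon$ -- as soon as
\[
\frac{\pi}{\sqrt{\mu}} \;\le\; \frac{\eta}{4\varepsilon}, \qquad \text{equivalently} \qquad \varepsilon \;\le\; \frac{\eta\,\sqrt{\mu}}{4\pi}.
\]
The elementary bound $e^{x}-1 \geq x$ for $x\geq 0$ yields $\mu \geq 2\pi^2\eta^2$, and therefore $\sqrt{\mu} \geq \pi\sqrt{2}\,\eta$. Substituting back, the sufficient condition becomes
\[
\varepsilon \;\le\; \frac{\eta \cdot \pi\sqrt{2}\,\eta}{4\pi} \;=\; \frac{\sqrt{2}\,\eta^{2}}{4} \;=\; \frac{\eta^{2}}{2\sqrt{2}},
\]
which is precisely the hypothesis of the proposition. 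The main technical point -- rather than a genuine obstacle -- is verifying that the differential inequality $u_1' \geq \mu + u_1^2$ remains valid throughout $[\eta/(4\varepsilon),\,\eta/(2\varepsilon)]$ under only the stated bound on $\varepsilon$; this is where the bracketing by the auxiliary functions $w_u,w_o$ controlling $u_2$ and the invariance statements of Propositions~\ref{Prop:u_1^0_below}--\ref{Prop:u_1^0_above} and Remark~\ref{Rem:RestrictionInitialValues2} are invoked exactly as in the proof of Proposition~\ref{Thm:Blowup_before_Blowup}, so no additional smallness requirement on $\varepsilon$ beyond $\varepsilon < \eta^{2}/(2\sqrt{2})$ is needed.
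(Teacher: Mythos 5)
Your proof is correct and follows essentially the same route as the paper: you reset time via Proposition~\ref{Prop:RelationInitialValues} and Lemma~\ref{lemma:eta}, then combine the blowup-time bound $\pi/\sqrt{\mu}$ from Remark~\ref{Rem:Blowup_Time} with the window of length $\eta/(4\varepsilon)$ from Proposition~\ref{Thm:Blowup_before_Blowup} to get the sufficient condition $\varepsilon \le \eta\sqrt{\mu}/(4\pi)$, and simplify via $e^x - 1 \ge x$ to obtain $\eta^2/(2\sqrt{2})$. The only point worth noting is your closing assertion that the differential inequality on $[\eta/(4\varepsilon),\eta/(2\varepsilon)]$ holds under no smallness condition beyond the stated one: the paper does not verify this explicitly either (and acknowledges non-sharpness in its remark), so you are on the same footing as the authors, but it would be slightly cleaner to observe that the exponential decay in the bound for $w_o$ makes the requisite estimate automatic for any $\varepsilon$ below the stated threshold once $\eta$ is fixed via Lemma~\ref{lemma:eta}.
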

\begin{proof}
 	In the proof of Proposition~\ref{Thm:Blowup_before_Blowup}, blowup is generated in the time
	interval $[0,\frac{v_1^0}{4\varepsilon}]=[0,\frac{\eta}{4\varepsilon}]$. In combination with
	Remark~\ref{Rem:Blowup_Time}, it follows that it suffices to take $\varepsilon$ small enough
	such that $\varepsilon<\frac{\eta\sqrt{\mu}}{4\pi}$. To prove the assertion, we rewrite the
	right-hand side of that inequality as
	 \begin{align*}
		 \frac{\eta\sqrt{\mu}}{4\pi}=\frac{\eta\sqrt{\eta(e^{2\pi^2\eta}-1)}}{4\pi},
	 \end{align*}
	which is, in fact, sharper than the right-hand side in the assertion; for conciseness, we observe
	that $e^{2\pi^2\eta}-1>2\pi^2\eta$ and, hence, that
	\[
		\frac{\eta\sqrt{\mu}}{4\pi}>\frac{\eta^2}{2\sqrt{2}},
	\]
	whence the assertion follows.
\end{proof}


\printbibliography

\end{document}